\setlist[enumerate]{parsep=0pt}
\newtheorem{theorem}{Theorem}
\theoremstyle{definition}
\newtheorem{definition}[theorem]{Definition}
\theoremstyle{theorem}
\newtheorem{lemma}[theorem]{Lemma}
\newtheorem{prop}[theorem]{Proposition}
\newtheorem{cor}[theorem]{Corollary}
\newtheorem{remark}[theorem]{Remark}
\crefname{theorem}{Theorem}{Theorems}
\crefname{lemma}{Lemma}{Lemmas}
\crefname{prop}{Proposition}{Propositions}
\crefname{fact}{Fact}{Facts}
\crefname{remark}{Remark}{Remarks}
\crefname{cor}{Corollary}{Corollaries}
\newcommand{\theoremprefix}{}
\let\thetheoremsaved\thetheorem
\renewcommand{\thetheorem}{\theoremprefix\thetheoremsaved}
\patchcmd{\@startsection}{\par}{\renewcommand{\theoremprefix}{\csname the#1\endcsname.}}{}{}
\begin{document}

\clearpage
\pagenumbering{arabic}

\def\tp{\mbox{\rm tp}}
\def\qftp{\mbox{\rm qftp}}
\def\cb{\mbox{\rm cb}}
\def\wcb{\mbox{\rm wcb}}
\def\Diag{\mbox{\rm Diag}}
\def\trdeg{\mbox{\rm trdeg}}
\def\Gal{\mbox{\rm Gal}}
\def\Lin{\mbox{\rm Lin}}

\def\restriction#1#2{\mathchoice
              {\setbox1\hbox{${\displaystyle #1}_{\scriptstyle #2}$}
              \restrictionaux{#1}{#2}}
              {\setbox1\hbox{${\textstyle #1}_{\scriptstyle #2}$}
              \restrictionaux{#1}{#2}}
              {\setbox1\hbox{${\scriptstyle #1}_{\scriptscriptstyle #2}$}
              \restrictionaux{#1}{#2}}
              {\setbox1\hbox{${\scriptscriptstyle #1}_{\scriptscriptstyle #2}$}
              \restrictionaux{#1}{#2}}}
\def\restrictionaux#1#2{{#1\,\smash{\vrule height .8\ht1 depth .85\dp1}}_{\,#2}} 

\newcommand{\forkindep}[1][]{%
  \mathrel{
    \mathop{
      \vcenter{
        \hbox{\oalign{\noalign{\kern-.3ex}\hfil$\vert$\hfil\cr
              \noalign{\kern-.7ex}
              $\smile$\cr\noalign{\kern-.3ex}}}
      }
    }\displaylimits_{#1}
  }
}
\newpage
\begin{center}

\large \MakeUppercase{Looking for stabilizers in NSOP$_1$}

\vspace{5mm}

    \large Yvon \textsc{Bossut}
\end{center}
\vspace{10mm}

\vspace{10pt}
Abstract: In this work we study some examples of groups definable and type-definable in NSOP$_1$ theories. We exhibit some behaviors of these groups that differ from the ones of simple groups. We take interest in the notions of generics and stabilizers, and define the Kim-stabilizer.

\vspace{10pt}
We apply the notion of Kim-stabilizer and the stabilizer from \cite{hrushovski2011stablegrouptheoryapproximate} to the context of a group $G$ definable in an NSOP$_1$ field $F$ satisfying some assumptions to show that there is a finite to one embedding of a type definable subgroup of $G$ of bounded index into an algebraic group over $F$. We then show that definable groups in $\omega$-free PAC fields of characteristic $0$ satisfy these conditions.

\vspace{10pt}
\textbf{Acknowledgments:} I thank Frank O. Wagner for his advices and Nick Ramsey for his helpful emails.

\section{Introduction}

\vspace{10pt}
When we study a theory $T$ we want to understand its definable sets. In particular we want to analyze definable groups. We mention in this regard the results of Dobrowolski in \cite{dobrowolski2020sets} on groups definable in the theory of the generic bilinear form over an algebraically closed field and of Chatzidakis and Ramsey in \cite{chatzidakis2023measures} on groups definable in $e$-free PAC fields when $e$ is finite.

\vspace{10pt}
An effective result for this is the group configuration theorem, which Hrushovski proved in his thesis \cite{hrushovski1986contributions}. This result allows one to obtain a type-definable group from a certain configuration of independent elements in a stable theory. Typical applications of this result include:

\begin{theorem}\cite[Theorem C]{hrushovski1994groups} Let $G$ be a group definable in a pseudo-finite field $F$. Then $G$ is virtually definably isogenous to the set of $F$-rational points of an algebraic group over $F$.\end{theorem}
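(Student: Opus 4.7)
The plan is to apply the group configuration theorem in the stable reduct $\text{ACF}$, after lifting a generic configuration from $G$ into the algebraic closure of $F$. Three features of pseudo-finite fields drive the strategy: they are supersimple (of $SU$-rank one as pure fields) with the independence theorem over algebraically closed sets; they eliminate imaginaries after a mild coding, by work of Chatzidakis--van den Dries--Macintyre; and being PAC, every absolutely irreducible $F$-variety has a Zariski-dense set of $F$-rational points, so $F$-definable algebraic correspondences are recoverable from their $F$-rational traces.

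First I would replace $G$ by its connected component $G^0$, a $\bigwedge$-definable bounded-index subgroup guaranteed by supersimplicity, and fix a generic type $p \in S(M)$ of $G^0$ over a small elementary submodel $M \prec F$. Pick two independent realizations $a, b \models p$ and set $c = a \cdot b$. Then $(a,b,c)$ forms a group configuration in the simple-theoretic sense: it is pairwise $M$-independent, and each element is interalgebraic over $M$ with a canonical base of the type of the other two. Let $V_a, V_b, V_c$ be the algebraic loci of $a, b, c$ over $M$; these are absolutely irreducible $F$-varieties, and the graph of multiplication in $G$ sits Zariski-densely in $V_a \times V_b \times V_c$.

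Next I would feed this algebraic configuration into Weil's group chunk theorem (equivalently, the classical group configuration theorem in $\text{ACF}$). The output is a connected algebraic group $H$ defined over $F$, together with a generic, $F$-definable finite-to-finite correspondence $V_a \rightsquigarrow H$ that intertwines the partial multiplication on generic triples in $G^0$ with the group law of $H$.

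The main obstacle, and the heart of the argument, is to globalize this generic germ of correspondence into an honest $F$-type-definable group homomorphism $\phi : G^0 \to H(F^{\text{alg}})$ with finite fibers and image of finite index in $H(F)$. Here the independence theorem over algebraically closed sets becomes essential: by $3$-amalgamation over $M$ one patches three independent instances of the local correspondence into a single $F$-definable relation that is functional modulo finite ambiguity on all of $G^0$, hence a definable isogeny onto its image in $H$. Finiteness of the fibers is immediate from the construction, since any realization of $p$ is algebraic over its image under $\phi$. That the image actually lies in $H(F)$ up to finite index is where PAC-ness enters: the $F$-definable image of $G^0$ is Zariski-dense in a connected algebraic subgroup of $H$ defined over $F$, and PAC-ness forces such a subgroup to have an $F$-rational component of finite index meeting the image in a bounded-index subgroup. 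Intersecting with a definable finite-index subgroup of $G$ and descending along $\phi$ then yields the desired virtual definable isogeny between $G$ and $H(F)$.
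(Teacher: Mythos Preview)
The paper does not itself prove this theorem --- it is quoted from \cite{hrushovski1994groups} in the introduction as motivation. What the paper does provide is an explicit description of the ``typical strategy'' immediately after the statement, and then carries out an analogous argument in the NSOP$_1$ setting in Section~3 (culminating in \cref{resultatgroup}). So the comparison is against that described strategy.

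Your first two steps match: form the configuration $a,b,c=a\cdot b$ from independent generics, pass to $\mathbb{F}^{alg}$, and extract via the group chunk theorem an algebraic group $H$ with generics $a',b',c'$ interalgebraic with $a,b,c$. The divergence is in the globalization step. The strategy the paper describes, and the one it actually executes in \cref{resultatgroup}, is to take the \emph{stabilizer} of $\tp(a,a'/M)$ inside the product group $G\times H$ and show that this type-definable subgroup is the graph of the desired isogeny (the projections have finite kernel and large image). In the pseudo-finite case this is the $f$-stabilizer of simple theories; in the paper's NSOP$_1$ generalization it is the Kim-stabilizer combined with Hrushovski's $\mu$-stabilizer. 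The image lands in $H(F)$ automatically because the stabilizer is an $M$-type-definable subset of $G(F)\times H(F)$.

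Your proposal instead invokes $3$-amalgamation directly to ``patch three independent instances of the local correspondence into a single $F$-definable relation''. This is the step that is not really an argument as written. The independence theorem does play a role, but inside the stabilizer machinery: it is what guarantees that $S^f(p)$ is closed under independent products and hence that $Stab^f(p)=S^f(p)\cdot S^f(p)$ is a genuine type-definable group (\cref{cassimple}). Amalgamation on its own does not hand you a well-defined map on all of $G^{0}$; you would still have to show that the generic correspondence is coherent on overlaps and extends off the generic locus, and the clean way to package exactly that verification is the stabilizer in $G\times H$. So your outline is pointed in the right direction, but the crucial third step should be rewritten around $Stab(\tp(a,a'/M))\leq G\times H$ rather than an unspecified patching.
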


\vspace{10pt}
There are two major notions in the study of groups in model theory that we will now present in a rather informal way: the notion of formula and generic type in a group, and that of stabilizer of a type in a group. Let $G$ be a group definable in a model $\mathbb{M}$ of a theory $T$.

\vspace{10pt}
A definable subset $X$ of $G$ is said to be left-generic if a finite number of left translates of $X$ cover $G$, similarly for right-generic. A type is said to be (left/right) generic if it implies only (left/right) generic formulas. In a stable theory there exist generic complete types on any set of parameters, but this is not true in general. There is a similar notion in the context of simple theories. Given a complete type $p\in S_{G}(A)$ the stabilizer of $p$ is a subgroup $Stab(p) \leq G$ generated by elements 'stabilizing' the type, in the following sense: $g\in Stab(p)$ iff $p \cup g\cdot p$ is consistent, where $g\cdot p$ is the type $p$ translated by $g$. The link between these two notions is that a generic type must have a 'large' stabilizer - of bounded index for example.

\vspace{10pt}
The typical strategy for studying a definable group $G$ in a model $F$ of a field theory is as follows:

\begin{enumerate}
    \item Take $a,b,x$ generic elements (meaning that their type is generic) independent in the group $G$, write $c:=a\cdot b$.
    \item The group configuration theorem provides us with elements $a',b',c'$ in a group $H$ definable in the algebraic closure of $F^{alg}$ of $F$ (i.e., an algebraic group) interalgebraic with $a,b,c$ respectively on a set of parameters $E$.
    \item We then consider the stabilizer of $\tp(a,a'/E)$ in $G \times H$, and we show that it defines an isogeny between $G$ and $H$.
\end{enumerate}

\vspace{10pt}
This is the method that we want to apply to study groups definable in $\omega$-free PAC fields of characteristic $0$. To achieve this we seek to generalize the notions of genericity and stabilizer to NSOP$_1$. For the notion of generic, by taking the 'naive' generalization of the generic in the simple setting we have a negative answer (which was already noted by Dobrowolski: \cite[Proposition 8.14]{dobrowolski2020sets}). For the notion of stabilizer we give a definition generalizing that of the simple case, which defines in general an invariant group. We show that in certain examples the subgroup obtained, which we will call the Kim-stabilizer, is type-definable. We generalize some of the results on groups definable in a simple theory to this notion.

\vspace{10pt}
We also introduce another notion of stabilizer, due to Hrushovski in \cite{hrushovski2011stablegrouptheoryapproximate}. An ideal of formula is a set of formula closed under disjunction and logical implication (if $\theta \in \mu$ and $\theta \vdash \theta'$ then $\theta'\in \mu$). An ideal $\mu$ is said to be $S1$ on a set of parameters $A$ if it is $A$-invariant and if $\varphi(x,a_{i})\wedge \varphi(x,a_{j})\in \mu$ for $i\centernot= j < \omega$ implies $\varphi(x,a_{i})\in \mu$ for any $A$-indiscernible sequence $(a_{i})_{i}$ and any formula $\varphi(x,y)$. Under the assumption of the existence of an $S1$ ideal of formulas in a group, he defines a notion of stabilizer and shows that it is a type-definable subgroup.

\vspace{10pt}
Using the two notions of stabilizers, the Hrushovski one and the Kim-stabilizer one we prove under certain assumptions on a NSOP$_1$ field $F$ and a group $G$ definable in $F$ that there exists a typical-definable subgroup of $G$ of bounded index and a finite kernel embedding of this subgroup into the $F$-rational points of an algebraic group definable over $F$. We also show that the groups definable in $\omega$-free PAC fields of characteristic $0$ satisfy these assumptions.

\section{NSOP$_1$ groups}

\subsection{Kim-generics}

In this section we consider the notion of generic from stability and simplicity theory and try to see if we can find an analogy in the context of NSOP$_1$. The answer to this question seems to be negative: we give here some examples of `bad' behavior of NSOP$_1$ groups. The main example we consider in this section is the model completion of the theory of abelian groups with a generic unary function $f$ such that $f(0)=0$. We denote this theory by $T_{f,ab}$.

\vspace{10pt}
This theory is constructed in \cite{bossut2024some}. It is an NSOP$_1$ theory with existence and has quantifier elimination in the language $L= \lbrace +,0,f \rbrace$. Let $\mathbb{M}$ be a monster model of $T_{f,ab}$, we recall some of the properties of $\mathbb{M}$:
\begin{enumerate}
\item \cite[Remark 3.2.5]{bossut2024some} : The algebraic closure of any set $A\subseteq \mathbb{M}$ is equal to the structure generated by $A$.
\item \cite[Corollary 3.2.10]{bossut2024some} : Kim-forking independence coincides with algebraic independence $\forkindep^{a}$.
\item \cite[Proposition 3.2.9]{bossut2024some} : There is an independence relation $\forkindep^{\Gamma}$ that is invariant, stationary, satisfies existence, extension, transitivity, base monotonicity and symmetry.
\item \cite[Definition 3.3.1(4)]{bossut2024some} : For any subset $A\subseteq \mathbb{M}$ there is a type $p_{A}\in S(A)$ that satisfies the following property: Let $a\models p$ and let $\psi: A \rightarrow B$ be morphism of $L$-structures from $A$ to $B\subseteq \mathbb{M}$ and let $b\in B$ be any element. Then there is a unique extension of $\psi$ into a morphism from $ \langle A,a\rangle$ to $B$ sending $a$ to $b$. $p_{A}$ is called the $\Gamma$-generic type over $A$.
\end{enumerate}

\begin{remark} For any subsets $A\subseteq B \subseteq \mathbb{M}$ the type $p_{B}$ is the unique non-$\Gamma$ forking extension of $p_{A}$ to $B$, in particular $p_{A}$ is the  non-$\Gamma$ forking extension of $p_{\emptyset}$ to $A$.
\end{remark}

We now give the `naive' definition of a Kim-generic element of a group.

\begin{definition}
    Let $(G,\cdot)$ be a group definable over $E$ in a model $M$ of an NSOP$_1$ theory with existence. For $E\subseteq B$ a set of parameters we say that an element $a\in G$ is \emph{Kim-generic in $G$ over $B$} if $a\cdot b\forkindep_{B}^{K}b$ for every $b\in G$ such that $a\forkindep^{K}_{B}b$.
\end{definition}

\begin{remark}
The group $(\mathbb{M},+)$ does not have Kim-generics: let $a\in \mathbb{M}$, and consider $b$ such that $a\forkindep^{a}_{\langle 0 \rangle}b$ with the function $f$ being equal to $0$ everywhere in $\langle a,b\rangle\setminus \langle a \rangle$ except that $f(a+b)=b$. Then $a\forkindep^{a}_{\langle 0 \rangle}b$ and $a+b\centernot\forkindep^{a}_{\langle 0 \rangle}b$, so $a$ is not Kim-generic. This behavior was already observed in the theory $sT^{ACF}_{\infty}$ in \cite[Proposition 8.14]{dobrowolski2020sets}. A similar counter-example can be constructed with a model as the basis (i.e. instead of $\langle 0 \rangle$ here).\end{remark}

\begin{prop}\label{typeglobinv} There is a global type $p$ in $\mathbb{M}$ which is invariant under translation and does not $\Gamma$-fork over $\langle 0 \rangle$, meaning that if $g\models p_{\vert A}$ for a set $A$ and if $a\in A$ then $g + a\models p_{\vert A}$ and $g + a\forkindep^{\Gamma}_{\langle 0 \rangle}A$.
\end{prop}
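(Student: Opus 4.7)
The plan is to take $p$ to be the global $\Gamma$-generic type over $\mathbb{M}$, namely the type whose restriction to each $A \subseteq \mathbb{M}$ is $p_A$. By the remark, $p_B$ is the unique non-$\Gamma$-forking extension of $p_A$ whenever $A \subseteq B$, so stationarity of $\forkindep^{\Gamma}$ ensures these $p_A$'s cohere into a single complete global type; by construction $p$ does not $\Gamma$-fork over $\langle 0 \rangle$. The substantive work is to prove translation invariance.

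The main step is that each $p_A$ is translation-invariant: for $g \models p_A$ and any $a \in A$, we have $g+a \models p_A$. The key observation is that $\langle A, g \rangle = \langle A, g+a \rangle$ as $L$-substructures of $\mathbb{M}$, since $a \in A$ allows one to recover $g$ inside $\langle A, g+a \rangle$ via $g = (g+a) - a$, and symmetrically. Given any $L$-morphism $\phi : A \to B \subseteq \mathbb{M}$ and any target $b \in B$, applying the universal property characterizing $p_A$ to $b' := b - \phi(a)$ produces a unique extension of $\phi$ to $\langle A, g \rangle$ sending $g \mapsto b'$; this same extension sends $g+a$ to $b$. Conversely any extension sending $g+a \mapsto b$ must send $g \mapsto b'$, so the uniqueness transfers, and $g+a$ therefore satisfies the universal property characterizing $p_A$.

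Translation invariance of the global $p$ then follows by localizing. For $g \models p$ in a larger monster and any $a \in \mathbb{M}$, to verify $g+a \models p$ it suffices to check $g+a \models p|_B$ for each small $B \subseteq \mathbb{M}$; enlarging $B$ to contain $a$ does not change this restriction, and the previous step applied to $p_B$ gives the desired statement. The second clause $g + a \forkindep^{\Gamma}_{\langle 0 \rangle} A$ is then immediate from $g + a \models p|_A = p_A$ together with the remark identifying $p_A$ as the non-$\Gamma$-forking extension of $p_{\langle 0 \rangle}$.

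I do not expect a serious obstacle. The one subtlety to keep in mind is that translation by $a$ is \emph{not} an $L$-automorphism of $\mathbb{M}$, because of the function $f$, so translation invariance cannot be argued by pushing $p$ forward along an automorphism. What saves the argument is the substructure equality $\langle A, g \rangle = \langle A, g+a \rangle$, which uses only that $+$ and $-$ are in the language and $a \in A$, and lets us transfer the universal property without interacting with $f$ at all.
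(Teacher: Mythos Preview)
Your argument is correct, and it is genuinely more direct than the paper's. Both take $p$ to be the global $\Gamma$-generic type, but the paper does not verify the defining universal property of $p_A$ for $g+a$ head-on. Instead it factors the problem: from $g\forkindep^{\Gamma}_{\langle 0\rangle}A$ and base monotonicity it gets $g+a\forkindep^{\Gamma}_{a}A$, and then reduces to proving the two statements $g+a\equiv_{\langle 0\rangle}g$ and $g+a\forkindep^{\Gamma}_{\langle 0\rangle}a$, after which stationarity and transitivity of $\forkindep^{\Gamma}$ force $g+a\models p_{|A}$. Each of these two statements is then checked by building suitable morphisms inside $\langle g,a\rangle$ (for the first, exhibiting an isomorphism $\langle g+a\rangle\cong\langle g\rangle$ using the $\Gamma$-genericity of $g$ and the $\forkindep^{\Gamma}$-independence $g\forkindep^{\Gamma}a$; for the second, verifying the morphism-factorisation criterion for $\forkindep^{\Gamma}$). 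Your route bypasses all of this machinery: the equality $\langle A,g\rangle=\langle A,g+a\rangle$ lets you turn the extension problem for $g+a$ with target $b$ into the extension problem for $g$ with target $b-\phi(a)$, and existence/uniqueness transfer immediately. What you lose is the explicit isolation of $g+a\equiv_{\langle 0\rangle}g$ and $g+a\forkindep^{\Gamma}_{\langle 0\rangle}a$ as separate facts, which the paper's decomposition records along the way; what you gain is that you never need to invoke base monotonicity, transitivity, or stationarity of $\forkindep^{\Gamma}$, nor the characterisation of $\forkindep^{\Gamma}$ by factorisation of morphisms. Your closing remark about translation by $a$ not being an $L$-automorphism is well taken and is exactly the reason the paper's proof also has to work at the level of substructures rather than just pushing $p$ around.
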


\begin{proof}
Consider first the $\Gamma$-generic type $p_{\langle 0 \rangle}$ over $\langle 0 \rangle$. We define $p$ as its global non-$\Gamma$-forking extension $p=p_{\mathbb{M}}$. By definition this type does not $\Gamma$-fork over $\langle 0 \rangle$. We now show that it is invariant under translations. Let $A$ be a set of parameters, $a\in A$ and $g\models p_{\vert A}$.

\vspace{10pt}
By base monotonicity $g + a \forkindep^{\Gamma}_{a}A$, so by transitivity and stationarity of $\forkindep^{\Gamma}$ it is enough to show that $g + a\forkindep^{\Gamma}_{\langle 0 \rangle}a$ and that $g + a \equiv_{\langle 0 \rangle} g$. For the second point by quantifier elimination it is enough to show that there is a morphism $\varphi: \langle g + a \rangle \rightarrow \langle g \rangle$ sending $g + a$ to $g$: In fact then the morphism sending $g$ to $g + a$ (which exists by the $\Gamma$-genericity of $g$) is its inverse so these two structures are isomorphic.

\vspace{10pt}
We know that $g\forkindep^{\Gamma}_{\langle 0 \rangle}a$. By genericity of $g$ there is a morphism sending $g$ to $g+a \in \langle g,a \rangle$. Then $\forkindep^{\Gamma}$-independence implies that there is a morphism $\psi: \langle a, g+a \rangle  \rightarrow \langle a,g \rangle$ that sends $a$ to $a$ and $g+a$ to $g$. We can consider the restriction of $\psi$ to $\langle g + a\rangle$ which is the morphism we want.

\begin{figure}[hbtp]
    \centering

\begin{tikzcd}
                                                       &  & {\langle a,g \rangle}                                                                       &  &                                                         \\
                                                       &  &                                                                                             &  &                                                         \\
                                                       &  & {\langle a,g+a \rangle} \arrow[uu, "\psi" description, dashed]                              &  &                                                         \\
\langle a \rangle \arrow[rru] \arrow[rruuu, bend left] &  &                                                                                             &  & \langle g \rangle \arrow[llu] \arrow[lluuu, bend right] \\
                                                       &  & \langle 0 \rangle \arrow[llu, "\subseteq" description] \arrow[rru, "\subseteq" description] &  &                                                        
\end{tikzcd}

\caption{The $\forkindep^{\Gamma}$-generic}

\end{figure}

Now we show that $g + a\forkindep^{\Gamma}_{\langle 0 \rangle}a$. For this we consider two morphisms $\psi_{1}: \langle g + a\rangle \rightarrow D$ and $\psi_{2}: \langle a\rangle \rightarrow D$ and we show that they factorize through $\langle g+a,a\rangle =\langle g,a\rangle$. Consider the morphism $\psi_{0}: \langle g\rangle \rightarrow D$ sending $g$ to $\psi_{1}(g + a) - \psi_{2}(-a)$, since $g\forkindep^{\Gamma}_{\langle 0 \rangle}a$ there is a morphism $\psi: D \rightarrow \langle g,a\rangle$ factorizing $\psi_{0}$ and $\psi_{1}$ through $D$, and it is easy to see that it also factorizes $\psi_{2}$.
\end{proof}

\begin{lemma} Let $T$ be any theory with a definable group $G$. Consider a formula $\varphi(x,b)$ that forks over a set of parameters $A$. Let $g\forkindep^{f}_{A}b$, then the formula $\varphi(g\cdot x,b)$ forks over $A$.
\end{lemma}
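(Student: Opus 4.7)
The plan is to argue by contrapositive, using the characterisation that a formula $\chi(x,e)$ does not fork over $A$ if and only if there exists $a \models \chi(x,e)$ with $a \forkindep^f_A e$.

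Suppose for contradiction that $\varphi(g\cdot x, b)$ does not fork over $A$. Then I can take some $a$ with $\models \varphi(g\cdot a, b)$ and $a \forkindep^f_A gb$. Since $g\cdot a$ realises $\varphi(x, b)$, it will suffice, by the same characterisation applied in reverse, to prove $g\cdot a \forkindep^f_A b$ in order to contradict the hypothesis that $\varphi(x, b)$ forks over $A$.

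To obtain $g\cdot a \forkindep^f_A b$, first apply base monotonicity to $a \forkindep^f_A gb$ to get $a \forkindep^f_{Ag} b$. Combined with the hypothesis $g \forkindep^f_A b$, the chain rule (left composition) of non-forking independence yields $(g, a) \forkindep^f_A b$. Since the group law on $G$ is definable over parameters which we may absorb into $A$, the element $g \cdot a$ lies in $\mbox{\rm dcl}(A, g, a)$; monotonicity of $\forkindep^f$ then gives $g\cdot a \forkindep^f_A b$, as required.

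The main (and only non-routine) step is the chain rule for non-forking independence: $g \forkindep^f_A b$ together with $a \forkindep^f_{Ag} b$ implies $(g, a) \forkindep^f_A b$. This is a standard property of non-forking in any first-order theory. The cleanest verification extends $\tp(g/Ab)$ and $\tp(a/Agb)$ to global non-forking extensions, realising them by $g^* \equiv_{Ab} g$ with $g^* \forkindep^f_A \mathbb{M}$ and $a^{**}$ non-forking over $Ag^*$ from the monster; given any hypothetical dividing formula in $\tp(g^*, a^{**}/Ab)$, Ramsey extraction of an $Ag^*$-indiscernible witnessing sequence (possible because $\tp(g^*/A\cdot)$ does not divide over $A$) then contradicts the non-forking of $a^{**}$ over $Ag^*$.
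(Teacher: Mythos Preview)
Your proof is correct and follows essentially the same route as the paper: argue by contradiction, pick a realisation $a$ of $\varphi(g\cdot x,b)$ with $a\forkindep^{f}_{A}gb$, then combine base monotonicity with left transitivity of non-forking and the hypothesis $g\forkindep^{f}_{A}b$ to obtain $g\cdot a\forkindep^{f}_{A}b$. The paper compresses the transitivity step into a single line and omits your final paragraph justifying the chain rule (which is standard and need not be rederived), but the argument is the same.
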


\begin{proof} If $\varphi(g\cdot x,b)$ does not fork over $A$ there is some $a\models \varphi(g\cdot x,b)$ such that $a\forkindep^{f}_{A}g,b$. Then $g,a\forkindep^{f}_{A,g}g,b$, and since $g\forkindep_{A}b$ by transitivity we have $g\cdot a\forkindep^{f}_{A}b$, contradicting that $\varphi(x,b)$ forks over $A$.
\end{proof}

\begin{remark} This does not hold for Kim-forking, even under stronger conditions: Let $b\in \mathbb{M}$, consider the formula $\varphi (x,b):=f(x)=b$. This formula Kim-divides over $\langle 0 \rangle$. Let $g\models p_{\vert b}$, with $p$ the type from \cref{typeglobinv}. The formula $\varphi (g+ x,b)$ Kim-divides over $g$, however it does not Kim-divide over $\langle 0 \rangle$.

\vspace{10pt}
To make things simpler assume that $f$ is the identity on $\langle b \rangle$. Consider $a$ such that $a \centernot\in \langle b,g\rangle$, $a+a =0$, and $f$ is the identity on the group generated by $a,\langle b,g \rangle$ with the exception of $f(g+a)=b$ (this defines in fact a structure in $K$). Then $a\forkindep^{K}_{\langle 0 \rangle}g,b$ and $\models \varphi (g+a,b)$ (also $a\forkindep^{K}_{b}g$, $g\forkindep_{\langle 0 \rangle}^{\Gamma}b$ and $a\centernot\forkindep^{K}_{g}b$).
\end{remark}

\vspace{10pt}
To sum up in an NSOP$_1$ group, translating a Kim-forking or non-Kim-forking formula by some independent parameter can either give a Kim-forking or a non-Kim-forking formula.

\subsection{The Kim-Stabilizer}

\subsubsection{Strengthenings of the Independence Theorem in NSOP$_1$}

We know that NSOP$_1$ theories with existence satisfy 3-amalgamation for Lascar strong types. We will write $a\equiv^{L}_{E}b$ to mean that $a$ and $b$ have the same Lascar strong type over $E$. 

\begin{theorem}\label{indepth}\cite[Theorem 5.6]{dobrowolski2022independence} Let $b\forkindep^{K}_{E}c$ and $a\equiv^{L}_{E}a'$ with $a\forkindep^{K}_{E}b$ and $a'\forkindep_{E}c$. Then $\tp(a/Eb)\cup \tp(a'/Ec)$ does not Kim-fork over $E$.
\end{theorem}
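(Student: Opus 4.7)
The plan is to invoke the independence theorem for Kim-independence in NSOP$_1$ theories with existence. The cleanest reduction is to produce a single realization $a''$ with $a''\equiv_{Eb}a$, $a''\equiv_{Ec}a'$, and $a''\forkindep^{K}_{E}bc$; given such $a''$, the union $\tp(a/Eb)\cup\tp(a'/Ec)$ is contained in a complete type over $Ebc$ that does not Kim-fork over $E$, so it cannot imply any Kim-forking formula over $E$.

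To construct $a''$ I would first set up a long Kim-Morley sequence on the $bc$ side. Using $b\forkindep^{K}_{E}c$, choose an $E$-invariant global type $q\supseteq \tp(c/Eb)$ that does not Kim-divide over $E$, and let $(c_i)_{i<\omega}$ be a Morley sequence in $q$ over $E$ with $c_0=c$. By $E$-invariance this sequence is $Eb$-indiscernible, hence a Kim-Morley witness for $\tp(c/E)$. Using $a'\forkindep_{E}c$ (reading the missing superscript as $K$) and symmetry of Kim-independence, one transfers $\tp(a'/Ec)$ along the sequence to obtain realizations $a'_i\equiv_{Ec_i}a'$ with the appropriate Kim-independence.

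The key step is then to amalgamate $\tp(a/Eb)$ with this system using Lascar equivalence. Since $a\equiv^{L}_{E}a'$, there is a finite chain of $E$-indiscernible sequences connecting $a$ to $a'$; one traverses this chain, at each stage invoking $a\forkindep^{K}_{E}b$ to attach a Lascar-conjugate of $a$ onto $(c_i)$ while preserving the required independences. Extracting an $E$-indiscernible pattern, applying compactness, and using NSOP$_1$ to rule out the associated SOP$_1$-tree then produces a common realization $a''$ satisfying both types with $a''\forkindep^{K}_{E}bc$.

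I expect the main obstacle to be exactly this zigzag amalgamation step: unlike the simple setting, where 3-amalgamation of Lascar strong types is built into the theory of forking, here one must carefully control which independence relation ($\forkindep^{K}$ versus $\forkindep$) is used at each stage, and ensure that the chain of Lascar-indiscernibles produced by $a\equiv^{L}_{E}a'$ is compatible with the chosen Morley sequence on the $c$-side. The delicate point is the invocation of NSOP$_1$ to guarantee that the combined tree of candidate realizations does not encode an SOP$_1$ pattern; this is precisely where the simple-theory argument breaks down and where the strengthened independence theorem of \cite{dobrowolski2022independence} is needed.
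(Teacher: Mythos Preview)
The paper does not prove this statement: it is quoted verbatim as \cite[Theorem 5.6]{dobrowolski2022independence} and then used as a black box for the rest of Section 2.2.1. There is therefore no in-paper proof to compare your attempt against.

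As a standalone argument, your proposal is circular. You open by announcing that you will ``invoke the independence theorem for Kim-independence in NSOP$_1$ theories with existence,'' and you close by saying that the delicate amalgamation step ``is precisely where \ldots\ the strengthened independence theorem of \cite{dobrowolski2022independence} is needed.'' But the statement you are asked to prove \emph{is} that independence theorem from \cite{dobrowolski2022independence}; you cannot appeal to it at the crux. The reduction in your first paragraph (produce $a''$ realizing both types with $a''\forkindep^{K}_{E}bc$) is fine but is a restatement of the conclusion, not progress toward it.

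There is also a concrete technical gap in the middle: you ask for ``an $E$-invariant global type $q\supseteq \tp(c/Eb)$ that does not Kim-divide over $E$.'' Over an arbitrary set $E$ there is in general no $E$-invariant global type available; this is exactly one of the obstacles that forces the work in \cite{dobrowolski2022independence} to go through tree-indiscernibles and Lascar strong types rather than a straightforward Morley-sequence argument. Your sketch gestures at the right ingredients (Kim-Morley sequences, a zigzag along a Lascar chain, ruling out an SOP$_1$ pattern), but none of the actual work is carried out, and the one place you name a specific tool it is either unavailable (the $E$-invariant $q$) or the theorem itself.
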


In this subsection we prove some strengthenings of this result that we will use in the following subsection. For this we begin by recalling some facts about Kim-independence in NSOP$_1$ theories. It is shown in \cite[Theorem 2.5.10]{bossut2022kimforking} that \cref{indepth} holds in hyperimaginaries, this with the fact that $a\forkindep^{K}_{E}b$ if and only if $bdd(Ea)\forkindep^{K}_{bdd(E)}bdd(Eb)$ for all tuples $a,b,e\in \mathbb{M}^{heq}$ (\cite[Remark 2.2.13]{bossut2022kimforking} and symmetry of $\forkindep^{K}$) implies that we can assume that the set of parameter $E$ we consider in the amalgamation problem is boundedly closed.

\begin{definition} Let $E\subseteq \mathbb{M}$ be a set of parameters and $p(x)$ be a type over $E$. We call $p$ a \emph{Kim-amalgamation base} if whenever $p_{1}$ and $p_{2}$ are non-Kim-forking extensions of $p$ over some sets of parameters $A$ and $B$ respectively, with $A,B\subseteq \mathbb{M}^{eq}$ such that $A\forkindep^{K}_{E}B$, then the type $p_{1}\cup p_{2}$ does not Kim-fork over $E$. Any complete type over a model or more generally a boundedly closed set is a Kim-amalgamation base: \cite[Theorem 2.5.8]{bossut2022kimforking}.\end{definition}

\begin{prop}\cite[Proposition 2.2]{kim2021weak}\label{witnessing} Let $a_{0}\forkindep^{K}_{E}B$ with $E\subseteq B$ and let $I=(a_{i})_{i<\omega}$ be a Kim-Morley sequence over $E$. Then there is $I'\equiv_{Aa_{0}}I$ such that $I'$ is Kim-Morley over $B$.
\end{prop}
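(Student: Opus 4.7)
The plan is to build $I' = (a_{0}, a'_{1}, a'_{2}, \ldots)$ one element at a time using the Independence Theorem for Kim-independence, and then to observe that the resulting sequence is automatically $B$-indiscernible. As noted in the paragraph preceding the statement, I may pass to $\mathbb{M}^{heq}$ and assume that both $E$ and $B$ are boundedly closed, so that 'same type over $E$' coincides with 'same Lascar strong type over $E$', and similarly for $B$.

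Inductively I maintain the invariants: (i) $(a_{0}, a'_{1}, \ldots, a'_{n}) \equiv_{E} (a_{0}, a_{1}, \ldots, a_{n})$; (ii) $a'_{i} \equiv^{L}_{B} a_{0}$ for each $1 \le i \le n$; and (iii) $(a_{0}, a'_{1}, \ldots, a'_{n}) \forkindep^{K}_{E} B$. For the inductive step set $c := (a_{0}, a'_{1}, \ldots, a'_{n})$ and pull $\tp(a_{n+1}/E(a_{0},\ldots,a_{n}))$ back through the isomorphism of (i) to a type $p$ over $Ec$. The four hypotheses of \cite[Theorem 5.6]{dobrowolski2022independence} are in place: $c \forkindep^{K}_{E} B$ by (iii); $a_{n+1} \forkindep^{K}_{E} c$ via (i) and the Kim-Morley property of $(a_{i})$ over $E$; $a_{0} \forkindep^{K}_{E} B$ by hypothesis; and $a_{n+1} \equiv^{L}_{E} a_{0}$ by $E$-indiscernibility of $(a_{i})$. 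The resulting amalgam $a'_{n+1}$ realizes $p$ over $Ec$ and $\tp(a_{0}/B)$ over $B$, and satisfies $a'_{n+1} \forkindep^{K}_{E} Bc$. Invariants (i) and (ii) at $n+1$ are immediate; for (iii), base monotonicity turns $a'_{n+1} \forkindep^{K}_{E} Bc$ into $a'_{n+1} \forkindep^{K}_{Ec} B$, which together with (iii) at $n$ gives $(c, a'_{n+1}) \forkindep^{K}_{E} B$ by (left) transitivity of $\forkindep^{K}$.

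Base monotonicity applied to $a'_{n+1} \forkindep^{K}_{E} Bc$ also yields $a'_{n+1} \forkindep^{K}_{B} c$, so $I'$ is a sequence of realizations of $\tp^{L}(a_{0}/B)$ that is Kim-independent over $B$. A standard iterated use of the Independence Theorem then shows that the type over $B$ of any such sequence coincides with that of a Kim-Morley sequence in $\tp^{L}(a_{0}/B)$; in particular $I'$ is $B$-indiscernible, so it is Kim-Morley over $B$. Aggregating (i) across $n$ finally gives $I' \equiv_{Ea_{0}} I$.

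The main obstacle is this last step: the inductive 3-amalgamation produces Kim-independence and matching Lascar strong type over $B$ at each finite stage, but not $B$-indiscernibility of $I'$ directly, and one has to invoke the NSOP$_{1}$ uniqueness statement that Kim-independent sequences of realizations of a fixed Lascar strong type have the same type as a Kim-Morley sequence in that type, itself obtained by iterated 3-amalgamation. Once that input is in hand, everything else reduces to routine bookkeeping with base monotonicity, transitivity, and bounded closures.
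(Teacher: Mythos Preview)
The paper does not prove this proposition; it is quoted from \cite{kim2021weak} and used as input for later arguments. So there is no proof here to compare against, and I assess your argument on its own merits.

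There is a genuine gap. You invoke base monotonicity for Kim-independence twice: to pass from $a'_{n+1}\forkindep^{K}_{E}Bc$ to $a'_{n+1}\forkindep^{K}_{Ec}B$ (so that transitivity yields invariant (iii) at $n+1$), and again to obtain $a'_{n+1}\forkindep^{K}_{B}c$ (so that $I'$ is Kim-independent over $B$). But Kim-independence in NSOP$_{1}$ theories does \emph{not} satisfy base monotonicity; this is one of the standard ways it differs from forking in simple theories. Neither inference is available from $a'_{n+1}\forkindep^{K}_{E}Bc$ alone. You might try to repair this by appealing to the strengthened independence theorem (\cref{kimindtheorem}, i.e.\ \cite[Theorem~2.7]{kim2021weak}), whose conclusion does include $\alpha\forkindep^{K}_{Eb}bc$ and $\alpha\forkindep^{K}_{Ec}bc$; but as the paper itself notes, that theorem is proved in \cite{kim2021weak} \emph{using} the proposition you are establishing, so this would be circular.

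The final step is also wrong, not merely sketchy. The claim that any $\forkindep^{K}$-independent sequence of realisations of a fixed Lascar strong type over $B$ has the same type over $B$ as a Kim-Morley sequence is false already in simple theories: in the random graph over a model $M$, any two new vertices $a_{0},a_{1}$ with $a_{0}\equiv_{M}a_{1}$ are forking-independent over $M$, yet $\tp(a_{0}a_{1}/M)$ is not determined (there may or may not be an edge). The independence theorem guarantees that amalgams \emph{exist}, not that they are unique, so iterated $3$-amalgamation cannot by itself produce $B$-indiscernibility. The proof in \cite{kim2021weak} does not proceed by elementwise amalgamation; it works instead with Morley sequences in a suitable global invariant type and compactness, which is what supplies indiscernibility from the outset.
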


\begin{theorem}\cite[Theorem 2.8]{chernikov2023transitivitylownessranksnsop1}\label{witnessing2} Let $a_{0}$ be a tuple and let $E\subseteq B$ be sets of parameters. Then $a_{0}\forkindep^{K}_{E}B$ if and only if there exist $I=(a_{i})_{i<\omega}$ a Kim-Morley sequence over $E$ which is $B$-indiscernible.
\end{theorem}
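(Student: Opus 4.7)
The result is an equivalence, and I would prove the two directions separately.

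\emph{Only if.} Assume $a_0\forkindep^K_E B$. I first build any Kim-Morley sequence $I=(a_i)_{i<\omega}$ over $E$ whose first element is $a_0$, for example by iteratively realizing an $E$-invariant extension of $\tp(a_0/E)$. Applying \cref{witnessing} to $a_0$, $E$, $B$ and $I$ produces $I'\equiv_{Ea_0}I$ that is Kim-Morley over $B$. Two observations finish this direction. First, Kim-independence is preserved under $E$-elementary maps, so $I'\equiv_{Ea_0}I$ keeps $I'$ Kim-Morley over $E$ and still starting at $a_0$. Second, any Kim-Morley sequence over $B$ is by definition $B$-indiscernible, so $I'$ is the required witness.

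\emph{If.} Suppose we are given $I=(a_i)_{i<\omega}$ Kim-Morley over $E$ and $B$-indiscernible; I wish to deduce $a_0\forkindep^K_E B$. As the excerpt permits, I assume $E$ is boundedly closed, so that $\equiv_E$ coincides with $\equiv^L_E$. By the extension property of Kim-independence, I fix $a^*_0\equiv_E a_0$ with $a^*_0\forkindep^K_E B$, and by the ``only if'' direction just established I obtain a $B$-indiscernible Kim-Morley sequence $I^*=(a^*_i)_{i<\omega}$ over $E$ starting at $a^*_0$. The task reduces to proving $a_0\equiv_B a^*_0$, because invariance of Kim-independence will then transfer $a^*_0\forkindep^K_E B$ to $a_0\forkindep^K_E B$.

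To prove $a_0\equiv_B a^*_0$ I would amalgamate $\tp(a_0/B)$ and $\tp(a^*_0/B)$ using the independence theorem (Theorem~5.6 of the excerpt). The crucial structural input is the $B$-indiscernibility of $I$: for a $B$-automorphism $\sigma$ sending $a_1$ to $a_0$, the shifted sequence $\sigma(I)$ is again $B$-indiscernible and Kim-Morley over $E$ with $a_0$ as its second element, so $\sigma(a_0)$ is $\equiv_B$-conjugate to $a_0$ and Kim-independent from $a_0$ over $E$. Pairing $\sigma(a_0)$ with $a^*_0$ and feeding them into the independence theorem should produce a common realization of $\tp(a_0/B)$ and $\tp(a^*_0/B)$, forcing these two complete types to coincide.

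\emph{Main obstacle.} The delicate step is arranging all the hypotheses of the independence theorem simultaneously in the backward direction --- in particular the Kim-independence ``$b\forkindep^K_E c$'' between the two independent pieces of data, given that we do not yet know $a_0\forkindep^K_E B$. The reduction to boundedly closed $E$ is what supplies the Lascar-type compatibility $\equiv^L_E$, and a compactness step is likely needed to promote the finite-parameter amalgamation into the full equality of types $\tp(a_0/B)=\tp(a^*_0/B)$.
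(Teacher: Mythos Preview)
First, note that the paper does not actually prove this statement: it is quoted verbatim from \cite{chernikov2023transitivitylownessranksnsop1} and used as a black box, so there is no ``paper's own proof'' to compare against.

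Your forward direction is essentially fine, modulo a small wrinkle: from $I'\equiv_{Ea_0}I$ you only get that the first term of $I'$ has the same type as $a_0$ over $Ea_0$, not that it \emph{is} $a_0$. This is easily repaired (e.g.\ run the argument with a long enough sequence and extract via Erd\H{o}s--Rado, as is done elsewhere in the paper), so I would not call it a real gap.

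The backward direction, however, is genuinely circular. In any reading of ``pairing $\sigma(a_0)$ with $a^*_0$ and feeding them into the independence theorem'', the hypothesis $b\forkindep^{K}_{E}c$ of that theorem becomes either $a_0\forkindep^{K}_{E}B$ or $\sigma(a_0)\forkindep^{K}_{E}B$; since $\sigma(a_0)\equiv_B a_0$, both are equivalent to precisely the statement you are trying to prove. You flag this yourself as the ``main obstacle'', but it is not a technicality to be patched by compactness or by passing to $bdd(E)$: the independence theorem simply cannot be invoked here without already knowing the conclusion. Even if one could somehow run the amalgamation, the output $\alpha$ would satisfy $\alpha\equiv_{Ea_0}\sigma(a_0)$ and $\alpha\equiv_B a^*_0$, which still does not force $a_0\equiv_B a^*_0$ --- for that you would need $\alpha\equiv_{Ba_0}\sigma(a_0)$, and the theorem only gives agreement over $Ea_0$.

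The proof in the cited reference proceeds along a different axis: rather than trying to identify $\tp(a_0/B)$ with a known non-Kim-forking type, one works formula-by-formula and shows that if some $\varphi(x,b)\in\tp(a_0/B)$ Kim-divided over $E$, the $B$-indiscernibility of the entire sequence $I$ together with the witnessing properties of Kim-Morley sequences would produce a contradiction. The argument exploits the interaction between a Kim-Morley sequence in the $a$-variable and one in the $b$-variable, not a single $3$-amalgamation.
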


Using these results Kim shows the following version of the independence theorem:

\begin{theorem}\cite[Theorem 2.7]{kim2021weak}\label{kimindtheorem} Let $b\forkindep^{K}_{E}c$ and $a\equiv^{L}_{E}a'$ with $a\forkindep^{K}_{E}b$ and $a'\forkindep_{E}c$. Then there is $\alpha \models \tp(a/Eb)\cup \tp(a'/Ec)$ such that $\alpha\forkindep^{K}_{E}bc$, $\alpha\forkindep^{K}_{Eb}bc$ and $\alpha\forkindep^{K}_{Ec}bc$.
\end{theorem}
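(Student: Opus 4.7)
The first conclusion $\alpha \forkindep^{K}_{E} bc$ is already provided by the standard Independence Theorem (\cite[Theorem 5.6]{dobrowolski2022independence}), since any realization of $\tp(a/Eb) \cup \tp(a'/Ec)$ that does not Kim-fork over $E$ witnesses exactly this. The additional content lies in the two remaining clauses, which by normality of Kim-independence on the right reduce to $\alpha \forkindep^{K}_{Eb} c$ and $\alpha \forkindep^{K}_{Ec} b$.

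The plan is to encode each of these Kim-independences as an indiscernibility property of a Kim-Morley sequence through $\alpha$, via \cref{witnessing} and \cref{witnessing2}. First, from $a \forkindep^{K}_{E} b$, \cref{witnessing} produces a Kim-Morley sequence $I = (a_{i})_{i<\omega}$ over $E$ with $a_{0} = a$ that is also Kim-Morley over $Eb$, and hence $Eb$-indiscernible by \cref{witnessing2}. Using the Lascar equivalence $a \equiv^{L}_{E} a'$ to transport $I$, together with the weaker forking hypothesis $a' \forkindep_{E} c$ (handled as in the proof of Theorem 5.6), one similarly obtains a Kim-Morley sequence $I'$ over $E$ starting with $a'$ that is $Ec$-indiscernible. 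The main step is then to apply the Independence Theorem at the level of these sequences, followed by a Ramsey-style extraction, so as to build a single Kim-Morley sequence $(\alpha_{i})_{i<\omega}$ over $E$ with $\alpha_{0} = \alpha$ realizing $\tp(a/Eb) \cup \tp(a'/Ec)$ that is indiscernible simultaneously over $Eb$ and over $Ec$. Invoking \cref{witnessing2} for the bases $E$, $Eb$ and $Ec$ in turn then delivers the three desired Kim-independences for $\alpha$.

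The main obstacle is producing a \emph{single} Kim-Morley sequence that is simultaneously $Eb$-indiscernible and $Ec$-indiscernible. The two indiscernibility properties arise from independent applications of the witnessing apparatus to different starting configurations, so reconciling them requires an iterated amalgamation on the sequences themselves, exploiting $b \forkindep^{K}_{E} c$ to align the two constructions, and then passing to a common indiscernible via Ramsey. This joint control over the two bases is exactly what goes beyond \cite[Theorem 5.6]{dobrowolski2022independence}, and is the technical heart of the proof.
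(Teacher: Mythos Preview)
Your outline is essentially the same as the argument the paper records: in this paper the theorem itself is only cited from \cite{kim2021weak}, but the proof of \cref{amalgweaker} is explicitly described as ``just a restatement of the one of \cite[Theorem 2.7]{kim2021weak}'', and it proceeds exactly as you propose --- build an $E$-Morley sequence through $a$ that is Kim-Morley over $Eb$ via \cref{witnessing}, do the same on the $c$-side, amalgamate the two sequences using the independence theorem, extract a $bc$-indiscernible sequence by Erd\H{o}s--Rado, and read off the three independences from \cref{witnessing2}. One small sharpening: in your final step you only record that the extracted sequence is ``indiscernible over $Eb$ and over $Ec$'', but to invoke \cref{witnessing2} over the bases $Eb$ and $Ec$ you need it to be \emph{Kim-Morley} over each of these bases; this is inherited from the amalgamated sequence (which has the $Eb$-type of $I$ and the $Ec$-type of $I'$) and is preserved by the extraction, exactly as in the proof of \cref{amalgweaker}.
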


We begin with a strengthening of this result, which was already known, and we give a proof for the sake of completeness.

\begin{prop}\label{thamalgsup} An NSOP$_1$ theory with existence satisfies the following version of the independence theorem: Whenever there is $E,b,c,\beta,\gamma \in \mathbb{M}$ such that $\beta \forkindep^{K}_{E}b$, $\gamma \forkindep^{K}_{E}c$, $\beta\equiv^{L}_{E}\gamma$ and $b\forkindep^{K}_{E}c$ there exists an $\alpha$ such that $\alpha\equiv_{Eb}\beta$, $\alpha\equiv_{Ec}\gamma$, $\alpha\forkindep^{K}_{Eb}bc$, $\alpha\forkindep^{K}_{Ec}bc$, $b\forkindep^{K}_{E\alpha}c$ and $\alpha\forkindep^{K}_{E}bc$.
\end{prop}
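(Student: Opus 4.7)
The first four conclusions $\alpha\equiv_{Eb}\beta$, $\alpha\equiv_{Ec}\gamma$, $\alpha\forkindep^{K}_{Eb}bc$ and $\alpha\forkindep^{K}_{Ec}bc$ are precisely the content of the classical independence theorem (\Cref{kimindtheorem}); the genuinely new ingredient is $b\forkindep^{K}_{E\alpha}c$. By the Morley-sequence characterization (\Cref{witnessing2}), this last condition is equivalent to producing a Kim-Morley sequence over $E\alpha$ starting with $b$ that is $Ec\alpha$-indiscernible. My plan is therefore to construct $\alpha$ \emph{together with} such a witnessing sequence, by running the amalgamation of \Cref{kimindtheorem} along a carefully chosen Kim-Morley sequence rather than along $b$ alone.

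Concretely, I would first use \Cref{witnessing2} on $b\forkindep^{K}_{E}c$ to fix a Kim-Morley sequence $I=(b_{i})_{i<\omega}$ over $E$ with $b_{0}=b$ which is $Ec$-indiscernible. Using $\beta\forkindep^{K}_{E}b$ together with a Ramsey/extraction argument, produce $\beta^{*}$ with $\beta^{*}\equiv_{Eb_{0}}\beta$, $\beta^{*}\forkindep^{K}_{E}I$, and such that $I$ is $E\beta^{*}$-indiscernible; in particular $\beta^{*}\equiv^{L}_{E}\beta\equiv^{L}_{E}\gamma$. Since $I$ is Kim-Morley over $E$ and $Ec$-indiscernible, \Cref{witnessing2} also yields $I\forkindep^{K}_{E}c$. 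Now apply \Cref{kimindtheorem} to the enriched amalgamation data $(\beta^{*},\gamma,I,c)$, obtaining $\alpha$ with $\alpha\equiv_{EI}\beta^{*}$, $\alpha\equiv_{Ec}\gamma$, and $\alpha\forkindep^{K}_{E}Ic$, $\alpha\forkindep^{K}_{EI}Ic$, $\alpha\forkindep^{K}_{Ec}Ic$.

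Specializing to $b=b_{0}\in I$ gives conditions (a) and (b) directly; condition $\alpha\forkindep^{K}_{Ec}bc$ follows from $\alpha\forkindep^{K}_{Ec}Ic$ by monotonicity. For $b\forkindep^{K}_{E\alpha}c$, the $E\beta^{*}$-indiscernibility of $I$ transports along $\alpha\equiv_{EI}\beta^{*}$ to make $I$ indiscernible over $E\alpha$; combining this with the original $Ec$-indiscernibility and $\alpha\forkindep^{K}_{Ec}Ic$ upgrades it to $Ec\alpha$-indiscernibility. Together with $I$ being Kim-Morley over $E\alpha$ (using $\alpha\forkindep^{K}_{E}I$ and \Cref{witnessing}), \Cref{witnessing2} delivers $b_{0}\forkindep^{K}_{E\alpha}c$.

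The main technical obstacle will be recovering condition $\alpha\forkindep^{K}_{Eb}bc$: the amalgamation only produces $\alpha\forkindep^{K}_{EI}c$, and base monotonicity of Kim-forking fails in general, so the passage down from the base $EI$ to the smaller base $Eb$ is not automatic. To bridge this gap I would observe that the tail $I_{\geq 1}=(b_{i})_{i\geq 1}$ is a Kim-Morley sequence over $Eb$ which is still $Ecb$-indiscernible (hence $I_{\geq 1}\forkindep^{K}_{Eb}c$ by \Cref{witnessing2}), and combine this with $\alpha\forkindep^{K}_{EbI_{\geq 1}}c$ via transitivity of Kim-forking to obtain $\alpha I_{\geq 1}\forkindep^{K}_{Eb}c$, and in particular $\alpha\forkindep^{K}_{Eb}c$.
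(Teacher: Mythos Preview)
Your overall strategy is the same as the paper's: enrich $b$ to a suitable Morley sequence $I$, apply \cref{kimindtheorem} with $I$ in place of $b$, and then read off the extra independencies from the structure of $I$. However, two steps in your sketch do not go through as written.

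First, the sequence $I$ you obtain from \cref{witnessing2} is only a \emph{Kim}-Morley sequence over $E$, not an $E$-Morley sequence (a Morley sequence in a global $E$-invariant type). This matters in two places. In your step claiming $I\forkindep^{K}_{E}c$, \cref{witnessing2} only yields $b_{0}\forkindep^{K}_{E}c$; propagating this to the whole sequence requires knowing that $I_{\geq n}$ is Kim-Morley over $Eb_{<n}$ for each $n$, which does \emph{not} follow from $I$ being Kim-Morley over $E$ (base monotonicity of $\forkindep^{K}$ fails). The same issue breaks your final step, where you assert that the tail $I_{\geq 1}$ is Kim-Morley over $Eb$. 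Both problems disappear if $I$ is $E$-Morley, which is why the paper's first paragraph is more elaborate: it begins with an $E$-Morley sequence of large length $\kappa$ and spends a preliminary application of the independence theorem to arrange that it is simultaneously Kim-Morley over $E\beta$ and over $Ec$ while keeping $b_{0}=b$.

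Second, your claim that $E\alpha$-indiscernibility and $Ec$-indiscernibility of $I$ together with $\alpha\forkindep^{K}_{Ec}I$ ``upgrade'' to $Ec\alpha$-indiscernibility is unjustified; there is no such principle for Kim-independence. The paper handles this by working with a sequence of large length, applying Erd\H{o}s--Rado \emph{after} the amalgamation to extract an $Ec\alpha'$-indiscernible sequence finitely based on the original one, and then conjugating over $Ec$ to restore $b_{0}=b$ (the realignment is possible precisely because the original sequence was already $Ec$-indiscernible). This extraction-and-realignment step is the missing ingredient in your argument.
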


\begin{proof} We begin by showing that we can replace $b$ by a sequence $\Tilde{B}=(\Tilde{b}_{i})_{i<\kappa_{0}}$ which is $E$-Morley and Kim-Morley over $E\beta$ and $Ec$. Consider an $E$-Morley sequence $B'=(b'_{i})_{i<\kappa}$ such that $b'_{0}=b$ for $\kappa$ a large enough cardinal.

\vspace{10pt}
Since $\alpha\forkindep^{K}_{E}b'_{0}$ by \cref{witnessing} we can assume that $\beta\forkindep^{K}_{E}B'$ and that $B'$ is Kim-Morley over $E\beta$. Similarly since $c\forkindep^{K}_{E}b'_{0}$ there is $B''\equiv_{Ec}B'$ such that $B''\forkindep^{K}_{E}c$ and that $B''$ is Kim-Morley over $Ec$. By applying the independence theorem to $\tp(B'/E\beta)$ and $\tp(B''/Ec)$ over $E$ we get a new sequence $\Tilde{B}=(\Tilde{b}_{i})_{i<\kappa}$ which is $E$-Morley and Kim-Morley both over $E\beta$ and $Ec$. Since $\Tilde{B}\equiv_{Ec}B$ by moving $\beta$ we can assume that $\Tilde{b}_{0}=b$, which concludes the first step.

\vspace{10pt}
Now we apply \cref{kimindtheorem} to $\tp(\beta/E\Tilde{B})$ and $\tp(\gamma/Ec)$. We find some $\alpha' \models \tp(\beta/E\Tilde{B})\cup\tp(\gamma/Ec)$ such that $\alpha'\forkindep^{K}_{E}\Tilde{B}c$, $\alpha'\forkindep^{K}_{E\Tilde{B}}\Tilde{B}c$ and $\alpha'\forkindep^{K}_{Ec}\Tilde{B}c$. By Erdős-Rado we can find a sequence $\Tilde{B}'=(\Tilde{b}'_{i})_{i<\omega}$ which is $E\alpha' c$-indiscernible and finitely based on $\Tilde{B}$ over $E\alpha' c$. Since $\Tilde{B}$ is $Ec$-Kim-Morley and $E\alpha'$-Kim-Morley this implies that $\Tilde{B}'\equiv_{Ec}\tilde{B}$ and $\Tilde{B}'\equiv_{E\alpha'}\tilde{B}$. So we can find a sequence $B=(b_{i})_{i<\omega}$ with $b_{0}=b$ and $\alpha$ such that $\alpha B\equiv_{Ec}\alpha' \Tilde{B}'$.

\vspace{10pt}
$B$ is then an $E$-Morley sequence which is also Kim-Morley over $E\alpha$ and $Ec$. In particular the sequence $(b_{i})_{1\leq i <\omega}$ is $Eb_{0}$-Morley and $Eb_{0}c$-indiscernible, by \cref{witnessing2} this implies that $B\forkindep^{K}_{Eb_{0}}c$. Since $\alpha \forkindep^{K}_{EB}c$ transitivity yields that $\alpha\forkindep^{K}_{Eb}c$. We also have $\alpha\forkindep^{K}_{Ec}cb$. Since $B$ is Kim-Morley over $E\alpha$ and $E\alpha c$-indiscernible by \cref{witnessing2} $b\forkindep^{K}_{E\alpha}c$, so $\alpha$ is the tuple we want.\end{proof}

\begin{definition}\label{kimamalgaholds} We say that \emph{Kim-amalgamation holds} between two sets of parameters $B_{0}$ and $B_{1}$ over a common subset $E$ if for all $a^{0},a^{1}$ such that $a^{0}\equiv^{L}_{E}a^{1}$, $a^{0}\forkindep^{K}_{E}B_{0}$ and $a^{1}\forkindep^{K}_{E}B_{1}$, then there is an $a$ such that $a\equiv_{B_{i}}a^{i}$ for $=0,1$ and $a\forkindep^{K}_{E}B_{0}B_{1}$.
\end{definition}

We know that Kim-amalgamation over $E$ holds between two sets whenever they are Kim-independent over $E$, that is just the usual independence theorem for Kim-forking. However in some NSOP$_1$ theories the condition that Kim-amalgamation between some sets of parameters $B_{0}$ and $B_{1}$ holds over $E$ is weaker than the condition $B_{0}\forkindep^{K}_{E}B_{1}$. That is the case in $\omega$-free PAC fields for example with \cref{obois}. The point of the following lemma is to show that under this weaker assumption we can still get the stronger conclusion of \cref{kimindtheorem}.

\begin{lemma}\label{amalgweaker} Assume that $B_{0}$ and $B_{1}$ are two sets containing $E$ such that Kim-amalgamation holds between $B_{0}$ and $B_{1}$ over $E$. Then, for all $a^{0},a^{1}$ such that $a^{0}\equiv^{L}_{E}a^{1}$, $a^{0}\forkindep^{K}_{E}B_{0}$ and $a^{1}\forkindep^{K}_{E}B_{1}$, then there is an $a$ such that $a\equiv_{B_{i}}a^{i}$ for $=0,1$ and $a\forkindep^{K}_{E}B_{0}B_{1}$ and additionally $a\forkindep^{K}_{EB_{0}}B_{0}B_{1}$ and $a\forkindep^{K}_{EB_{1}}B_{0}B_{1}$.
\end{lemma}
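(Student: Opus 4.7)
The plan is to mimic the proof of Proposition \ref{thamalgsup}, substituting the Kim-amalgamation hypothesis for the appeal to $B_0 \forkindep^K_E B_1$ that appears at the crucial step there. The key trick is to apply Kim-amalgamation not to $a^0, a^1$ directly, but to Kim-Morley sequences $J^0, J^1$ starting with $a^0, a^1$ whose Kim-Morleyness has already been boosted from base $E$ to the larger bases $EB_0$ and $EB_1$ respectively.

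Using Proposition \ref{witnessing} applied to $a^0 \forkindep^K_E B_0$, I would first build a sequence $J^0 = (a^0_i)_{i<\omega}$ with $a^0_0 = a^0$ that is Kim-Morley over $EB_0$ (and hence, by restriction, still Kim-Morley over $E$); symmetrically, $J^1$ starts with $a^1$ and is Kim-Morley over $EB_1$. Taking the underlying $E$-Morley sequences to come from a common global invariant extension of the shared Lascar strong type of $a^0$ (equivalently $a^1$) over $E$ allows me to arrange $J^0 \equiv^L_E J^1$. By Theorem \ref{witnessing2}, the Kim-Morleyness over $E$ combined with $EB_i$-indiscernibility gives $J^i \forkindep^K_E B_i$, so the Kim-amalgamation hypothesis between $B_0$ and $B_1$ over $E$ produces a sequence $J = (a_i)_{i<\omega}$ with $J \equiv_{B_0} J^0$, $J \equiv_{B_1} J^1$, and $J \forkindep^K_E B_0 B_1$. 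Since Kim-Morleyness over a base $X$ is determined by the type of the sequence over $X$, $J$ inherits Kim-Morleyness over $E$, over $EB_0$, and over $EB_1$ simultaneously.

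Next, an Erd\H{o}s--Rado extraction yields a sequence $J' = (a'_i)_{i<\omega}$ that is $EB_0 B_1$-indiscernible and finitely based on $J$ over $EB_0 B_1$. Because subsequences of Kim-Morley sequences stay Kim-Morley (by right monotonicity of Kim-forking) and Kim-Morleyness is a property of types of finite subtuples, $J'$ remains Kim-Morley over each of $E$, $EB_0$, $EB_1$. The $EB_i$-indiscernibility of $J$ forces $\tp(a'_0/EB_i) = \tp(a_0/EB_i) = \tp(a^i/EB_i)$ for $i=0,1$. Setting $a := a'_0$, three applications of Theorem \ref{witnessing2}, one over each base $E$, $EB_0$, $EB_1$, to the $EB_0 B_1$-indiscernible Kim-Morley sequence $J'$ yield $a \forkindep^K_E B_0 B_1$, $a \forkindep^K_{EB_0} B_1$, and $a \forkindep^K_{EB_1} B_0$, which together with the type identifications deliver the required conclusion.

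The hard part is the first step: arranging the Lascar strong type identification $J^0 \equiv^L_E J^1$ compatibly with the boosted Kim-Morleyness over $EB_0$ and $EB_1$ produced by Proposition \ref{witnessing}. This requires starting both sequences from a common global invariant extension of the shared Lascar strong type over $E$ and checking that the type-preservation clause in witnessing permits the two upgrades to be carried out simultaneously on Lascar-equivalent $E$-Morley sequences.
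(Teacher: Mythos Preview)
Your approach is essentially the paper's: build Morley sequences through $a^{0}$ and $a^{1}$, upgrade each to be Kim-Morley over the corresponding $B_{i}$ via \cref{witnessing}, amalgamate the sequences using the hypothesis, extract a $B_{0}B_{1}$-indiscernible sequence, and read off all three independences with \cref{witnessing2}. Two small points are worth flagging.

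First, your sequences $J^{0},J^{1},J$ have length $\omega$, but the Erd\H{o}s--Rado extraction of a $B_{0}B_{1}$-indiscernible sequence requires the input sequence to be of sufficiently large cardinality (roughly $\beth_{(2^{|T|+|B_{0}B_{1}|})^{+}}$). The paper takes the sequences of length $\kappa$ for $\kappa$ large enough precisely for this reason; \cref{witnessing} is stated for $\omega$-sequences but the same conclusion holds for longer Morley sequences by the same argument or by compactness, so the fix is immediate.

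Second, what you call ``the hard part'' --- aligning $J^{0}\equiv^{L}_{E}J^{1}$ after the two separate applications of \cref{witnessing} --- is handled in the paper by the blanket reduction stated just before \cref{thamalgsup}: one may assume $E=\mathrm{bdd}(E)$, so that $\equiv^{L}_{E}$ coincides with $\equiv_{E}$. Under that reduction, $I_{0}\equiv_{E}I_{1}$ follows simply from both being $E$-Morley sequences in the common type $\tp(a^{0}/E)=\tp(a^{1}/E)$, and no delicate coordination of global invariant extensions is needed.
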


\begin{proof} This proof is just a restatement of the one of \cite[Theorem 2.7]{kim2021weak} where we make sure that we only use the independence theorem over the parameters $B_{0}$ and $B_{1}$. Consider an $E$-Morley sequence $I_{0} = (a^{0}_{i})_{i<\kappa}$ with $a^{0}_{0}=a^{0}$ and $\kappa$ a large enough cardinal. Since $a^{0}\forkindep_{E}B_{0}$ by \cref{witnessing} we can assume that $I_{0}\forkindep^{K}_{E}B_{0}$ and that $I_{0}$ is Kim-Morley over $B_{0}$.

\vspace{10pt}
Similarly we can find an $E$-Morley sequence $I_{1}=(a^{i}_{1})_{i<\kappa}$ such that $a^{i}_{1}=a^{1}$, $I_{1}\equiv_{E}I_{0}$, $I_{1}\forkindep^{K}_{E}B_{1}$ and $I_{1}$ is Kim-Morley over $B_{1}$. By assumption there is a sequence $I'=(a_{i})_{i<\kappa}$ such that $I'\equiv_{B_{i}}I_{i}$ for $i=0,1$ and $I'\forkindep^{K}_{E}B_{0}B_{1}$. By Erdős-Rado we can find a sequence $I=(a_{i})_{i < \omega}$ which is indiscernible over $B_{0}B_{1}$ and finitely based on $I'$ over $B_{0}B_{1}$. Let $a:=a_{0}$.

\vspace{10pt}
Then $I\forkindep^{K}_{E}B_{0}B_{1}$, $I$ is $E$-Morley, Kim-Morley over $B_{0}$ and over $B_{1}$, and $a\equiv_{B_{i}}a^{i}$ for $i=0,1$. By \cref{witnessing2} $a\forkindep^{K}_{B_{0}}B_{1}$ and $a\forkindep^{K}_{B_{1}}B_{0}$, so $a$ is the tuple we want.
\end{proof}

\subsubsection{Defining the Kim-stabilizer}

In this subsection we consider the notion of stabilizer: We give a definition extending the one of $f$-stabilizers in simple theories (see \cite[Chapter 4]{wagner2000simple} on this topic) to the NSOP$_1$ context. We begin by recalling the definition of the $f$-stabilizer.

\begin{definition}\cite[Definition 4.1.17]{wagner2000simple}\label{defstabsimple}
Let $(G,\cdot)$ be a group definable over $E$ in a model $M$ of a theory with existence and let $p\in S(E)$ be a type concentrating in $G$. We define the following set:

\begin{center}
$S^{f}(p):=\lbrace g$ : $\exists a \models p,g\cdot a \models p, a\forkindep^{f}_{E}g$
and $g\cdot a \forkindep^{f}_{E}g\rbrace$.\end{center}

For $a\in G$ we will write $S^{f}(a/E)$ to mean $S^{f}(\tp(a/E))$. We denote by $Stab^{f}(p)$ the subgroup of $G$ generated by $S^{f}(p)$, which we call the \emph{$f$-stabilizer} of $p$.
\end{definition}

\begin{prop}\cite[Proposition 4.1.21]{wagner2000simple}\label{cassimple} Let $(G,\cdot)$ be a type-definable group in a simple theory and $p$  be a type concentrating in $G$. If $p$ is Lascar-strong then $Stab^{f}(p)=S^{f}(p)\cdot S^{f}(p)$ is a type-definable subgroup of $G$. 
\end{prop}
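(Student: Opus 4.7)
The plan has three ingredients: showing that $S^{f}(p)$ is symmetric and contains the identity; showing that $S^{f}(p)\cdot S^{f}(p)$ is closed under multiplication; and verifying type-definability.

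\textbf{Symmetry and identity.} If $g\in S^{f}(p)$ is witnessed by $a$, then $b:=g\cdot a$ witnesses $g^{-1}\in S^{f}(p)$: indeed $g^{-1}\cdot b=a\models p$, and the independences $a\forkindep^{f}_{E}g$, $b\forkindep^{f}_{E}g$ transfer to $g^{-1}$ via the interdefinability of $g$ and $g^{-1}$ over $E$. The identity lies trivially in $S^{f}(p)$, so $S^{f}(p)\cdot S^{f}(p)$ is symmetric and contains $e$.

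\textbf{Closure under multiplication.} It suffices to show $S^{f}(p)\cdot S^{f}(p)\cdot S^{f}(p)\subseteq S^{f}(p)\cdot S^{f}(p)$, since induction then gives $S^{f}(p)^{n}\subseteq S^{f}(p)^{2}$ for all $n\geq 2$, hence $Stab^{f}(p)=S^{f}(p)\cdot S^{f}(p)$. Fix $g_{1},g_{2},g_{3}\in S^{f}(p)$. The hypothesis that $p$ is a Lascar strong type in a simple theory guarantees that $p$ admits a unique non-forking extension $p|_{Eg}$ to each $E\cup\{g\}$, so the defining ``$\exists a$'' in $g\in S^{f}(p)$ upgrades to a ``$\forall a\models p|_{Eg}$'' statement. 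Starting from $a\models p|_{E g_{1}g_{2}g_{3}}$ by extension, one translates iteratively by $g_{3}$, $g_{2}$, $g_{1}$, applying the independence theorem for Lascar strong types at each stage to realign the intermediate translate so that it realizes the appropriate non-forking extension of $p$ over the next $g_{i}$; this produces $b:=g_{1}g_{2}g_{3}\cdot a\models p$ with $b\forkindep^{f}_{E}g_{1}g_{2}g_{3}$. Now pick an auxiliary $c\models p$ with $c\forkindep^{f}_{E}\{a,b,g_{1},g_{2},g_{3}\}$ and set $h_{1}:=bc^{-1}$, $h_{2}:=ca^{-1}$, so that $h_{1}h_{2}=ba^{-1}=g_{1}g_{2}g_{3}$. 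The translation lemma for simple theories (the $\forkindep^{f}$-analogue of the lemma recalled earlier in the excerpt), combined with $c\forkindep^{f}_{E}\{a,b\}$, shows that $a$ witnesses $h_{2}\in S^{f}(p)$ and $c$ witnesses $h_{1}\in S^{f}(p)$.

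\textbf{Type-definability and main obstacle.} The set $S^{f}(p)$ is type-definable over $E$ via its explicit existential description, since $\forkindep^{f}$ is a type-definable relation over $bdd(E)$ in a simple theory and the Lascar strong hypothesis on $p$ lets us work through $bdd(E)$; hence $S^{f}(p)\cdot S^{f}(p)$ is type-definable as the image of a type-definable set under the multiplication of the type-definable group $G$. The main technical obstacle lies in the iterative realignment in the closure argument: when the $g_{i}$'s are not pairwise $E$-independent, the naive translate $g_{i}a$ need not remain in the correct non-forking extension of $p$ for the next multiplication, forcing one to invoke the independence theorem in its Lascar strong form to produce an equivalent realization with the required independence from the next factor. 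This is precisely the step where the Lascar strong hypothesis is indispensable, and it distinguishes the simple-theory argument from the stable case, where genericity would already be a type over $E$ itself.
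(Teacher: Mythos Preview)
The paper does not prove this proposition itself; it is quoted from Wagner. However, the remark immediately after \cref{stabilizateur} and the proof of \cref{stabomegafree} make the intended strategy explicit: first show that $S^{f}(p)$ is closed under $\forkindep^{f}$-independent products (this is the simple-theory version of \cref{stabilizateur}), and then, given $g_{1},g_{2},g_{3}\in S^{f}(p)$, pick $g\in S^{f}(p)$ of maximal stratified $\Delta$-rank with $g\forkindep^{f}_{E}g_{1}g_{2}g_{3}$. Then $g_{1}\cdot g^{-1}\in S^{f}(p)$, and $g\cdot g_{2}\in S^{f}(p)$ still has maximal rank, which \emph{forces} $g\cdot g_{2}\forkindep^{f}_{E}g_{3}$ and hence $g\cdot g_{2}\cdot g_{3}\in S^{f}(p)$. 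This gives $g_{1}g_{2}g_{3}=(g_{1}g^{-1})(gg_{2}g_{3})\in S^{f}(p)^{2}$. Your proposal avoids ranks entirely, which would be valuable if it worked, but there is a genuine gap.

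The problem is your ``iterative realignment'' producing $b=g_{1}g_{2}g_{3}\cdot a\models p$ with $b\forkindep^{f}_{E}g_{1}g_{2}g_{3}$. After one step you know $g_{3}a\models p$ and $g_{3}a\forkindep^{f}_{E}g_{3}$, but nothing gives $g_{3}a\forkindep^{f}_{E}g_{2}$ when the $g_{i}$ are not mutually independent. Applying the independence theorem here replaces $g_{3}a$ by some $a'\equiv_{E}g_{3}a$ with $a'\forkindep^{f}_{E}g_{2}$, and then $a'$ is no longer $g_{3}$ times anything specific, so after two more steps you do not arrive at $g_{1}g_{2}g_{3}\cdot a$. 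Worse, if your Step~1 did work as stated, the witness $a$ would already put $g_{1}g_{2}g_{3}$ in $S^{f}(p)$, proving $S^{f}(p)^{3}\subseteq S^{f}(p)$ and making the auxiliary $c$ superfluous. The $c$-trick does not repair this either: for $a$ to witness $h_{2}=ca^{-1}\in S^{f}(p)$ you need $a\forkindep^{f}_{E}ca^{-1}$ and $c\forkindep^{f}_{E}ca^{-1}$, and neither follows from $c\forkindep^{f}_{E}a$ for arbitrary $p$. (In $\mathbb{G}_{a}^{2}$ over an algebraically closed field with $p$ the generic type of $y=x^{2}$, one has $\trdeg(c-a/E)=2$ while $\trdeg(a/E)=\trdeg(c/E)=1$, so both $a$ and $c$ lie in $acl(E,c-a)$.) The rank argument succeeds precisely because maximality of rank manufactures the missing independence $gg_{2}\forkindep^{f}_{E}g_{3}$; there is no purely amalgamation-theoretic substitute for that step, which is exactly the point the paper's remark is making about the difficulty of extending this to NSOP$_{1}$.
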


\begin{definition}
Let $(G,\cdot)$ be a group definable over $E$ in a model $M$ of an NSOP$_1$ theory with existence and let $p\in S(E)$ be a type concentrating in $G$. We define the following set:

\begin{center}

$S^{K}(p):=\lbrace g$ : $\exists a \models p,g\cdot a \models p, a\forkindep^{K}_{E}g$ and $g\cdot a \forkindep^{K}_{E}g\rbrace$.
\end{center}

For some $a\in G$ we will write $S^{K}(a/E)$ to mean $S^{K}(\tp(a/E))$. We denote by $Stab^{K}(p)$ the subgroup of $G$ generated by $S^{K}(p)$, which we call the \emph{Kim-stabilizer} of $p$.
\end{definition}

If $q\in S(E)$ is a non-Kim-forking extension of $p$ then $S^{K}(q)\subseteq S^{K}(p)$. The set $S^{K}(p)$ is type-definable for a given complete type $p$, and it coincides with $S^{f}(p)$ in simple theories.

\vspace{10pt}
In fact if $a\models p$ then $S^{K}(p)$ is defined by the partial type:
\begin{center}
    
$\exists y ( p(y) \cup p(x\cdot y)\cup \lbrace \neg \varphi(x,y)$ : $\varphi(x,a)$ Kim-forks over $E \rbrace $

\hspace{100pt}$\cup \lbrace \neg \varphi(x,x\cdot y)$ : $\varphi(x,a)$ Kim-forks over $E \rbrace )$
\end{center}

\begin{lemma}\label{stabilizateur} Let $(G,\cdot)$ be a group type-definable over $E$ in a model $M$ of an NSOP$_1$ theory with existence and let $p\in S(E)$ be a Kim-amalgamation base concentrating in $G$. Consider $g,g'\in S^{K}(p)$ such that Kim-amalgamation holds between $g$ and $g'$ over $E$. Then $g\cdot g' \in S^{K}(p)$.
\end{lemma}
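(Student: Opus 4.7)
The plan is to mimic the simple-theory proof of \cref{cassimple}, replacing the use of full Kim-independence between $g$ and $g'$ by the hypothesis of Kim-amalgamation via \cref{amalgweaker}. Let $a_{1}, a_{2} \models p$ be witnesses for $g, g' \in S^{K}(p)$ respectively, so that $g_{i} \cdot a_{i} \models p$, $a_{i} \forkindep^{K}_{E} g_{i}$, and $g_{i} \cdot a_{i} \forkindep^{K}_{E} g_{i}$ for $(g_{1}, g_{2}) = (g, g')$. The guiding idea is to produce a single $b \models p$ which plays the role of $a_{2}$ relative to $g'$ (so that $g' b \models p$) and for which $g' b$ plays the role of $a_{1}$ relative to $g$ (so that $gg' b = g(g' b) \models p$).

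Since $p$ is a Kim-amalgamation base, $a_{1} \equiv^{L}_{E} g' \cdot a_{2}$. The type $\tp(a_{1}/Eg)$ does not Kim-fork over $E$, and neither does $\tp(g' a_{2}/Eg')$ because $g' a_{2} \forkindep^{K}_{E} g'$. As Kim-amalgamation holds between $g$ and $g'$ over $E$ by assumption, \cref{amalgweaker} yields some $c$ with $c \equiv_{Eg} a_{1}$, $c \equiv_{Eg'} g' a_{2}$, and the three independencies $c \forkindep^{K}_{E} gg'$, $c \forkindep^{K}_{Eg} gg'$, $c \forkindep^{K}_{Eg'} gg'$. Setting $b := (g')^{-1} \cdot c$, we obtain $b \equiv_{Eg'} a_{2}$, whence $b \models p$, and $gg' \cdot b = g \cdot c \equiv_{Eg} g \cdot a_{1}$, whence $gg' \cdot b \models p$.

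It then remains to verify the two Kim-independence statements. For $b \forkindep^{K}_{E} gg'$: since $b$ and $c$ are interdefinable over $Eg'$, we transport $c \forkindep^{K}_{Eg'} gg'$ to $b \forkindep^{K}_{Eg'} gg'$; combining with $b \forkindep^{K}_{E} g'$ (which follows from $b \equiv_{Eg'} a_{2}$ and $a_{2} \forkindep^{K}_{E} g'$), right-transitivity of $\forkindep^{K}$ gives $b \forkindep^{K}_{E} g', gg'$, and monotonicity then gives $b \forkindep^{K}_{E} gg'$. The case $gg' b \forkindep^{K}_{E} gg'$ is symmetric: $gc = gg' b$ is interdefinable with $c$ over $Eg$ and $gc \equiv_{Eg} g a_{1}$, so one combines $gc \forkindep^{K}_{Eg} gg'$ and $gc \forkindep^{K}_{E} g$ by transitivity and monotonicity. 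The delicate point is that base monotonicity fails for Kim-forking in general NSOP$_{1}$ theories, which is precisely why the extra independencies $c \forkindep^{K}_{Eg} gg'$ and $c \forkindep^{K}_{Eg'} gg'$ provided by \cref{amalgweaker} (rather than merely $c \forkindep^{K}_{E} gg'$) are essential; this is the main subtlety to keep track of.
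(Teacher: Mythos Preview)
Your proof is correct and follows essentially the same approach as the paper's: both amalgamate $\tp(a_{1}/Eg)$ and $\tp(g'a_{2}/Eg')$ via \cref{amalgweaker} to obtain a common realization (your $c$, the paper's $a''$), then take $b=g'^{-1}c$ as the witness for $gg'\in S^{K}(p)$, using the extra over-the-base independencies from \cref{amalgweaker} together with transitivity to verify the two required Kim-independence conditions. Your write-up is in fact somewhat cleaner and more explicit about why the strengthened conclusions of \cref{amalgweaker} (and not merely $c\forkindep^{K}_{E}gg'$) are needed, namely the failure of base monotonicity for $\forkindep^{K}$.
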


\begin{proof}
Let $g,g'\in S^{K}(p)$ as in the statement. By definition there are $a,a'\models p$ such that $g\cdot a, g'\cdot a'\models p$, $g\forkindep^{K}_{E}a$, $g'\forkindep^{K}_{E}a'$ and $g\cdot a\forkindep^{K}_{E}a$, $g'\cdot a'\forkindep^{K}_{E}a'$. By assumption we can apply \cref{amalgweaker} to $\tp(a/g)$ and $\tp(g'\cdot a'/g')$ to find some $a''\models \tp(a/g)\cup \tp(g'\cdot a'/g')$ such that $a''\forkindep^{K}_{Eg}g'$ and $a''\forkindep^{K}_{Eg'}g$. 

\vspace{10pt}
Then $g'^{-1}\cdot a''$ is the witness we want. In fact:
\begin{center}
$g'^{-1}\cdot a''\models p$ and $g\cdot g' \cdot (g'^{-1}\cdot a'' )\models p$. Also $a''\forkindep^{K}_{Eg}g,g'$ which implies $g\cdot g'\cdot a''\forkindep^{K}_{Eg}g,g'$.
\end{center}

Since $g\cdot a \forkindep^{K}_{E}g$ transitivity yields $g\cdot g'\cdot (g'^{-1} \cdot a'') \forkindep^{K}_{E}g\cdot g'$. Similarly $a''\forkindep^{K}_{Eg'}g,g'$ implies $g'^{-1}\cdot a''\forkindep^{K}_{Eg'}g,g'$. Since $a' \forkindep^{K}_{E}g'$ by transitivity $g'^{-1}\cdot a''\forkindep^{K}_{E}g\cdot g'$ holds, so $g\cdot g'\in S^{K}(p)$.
\end{proof}

\begin{remark} In particular $S^{K}(p)$ is stable under Kim-independent product: If $g,g'\in S^{K}(p)$ and $g\forkindep^{K}_{E}g'$ then $g\cdot g'\in S^{K}(p)$. In simple theory we can show that if a type-definable subset $X$ of a group is stable by $\forkindep^{f}$-independent product then the subgroup generated by $X$ is $X\cdot X$. It is not clear that we can generalize this result to NSOP$_1$ since the proof revolves around the notion of stratified $\Delta$-rank, which does not generalize well to NSOP$_1$. 
\end{remark}

\begin{remark}\label{goulougoulou} $S^{K}(p)$ is also stable by inversion: If $g\in S^{K}(p)$ and $a\models p$ is such that $g\cdot a\models p$, $g\forkindep^{K}_{E}a$ and $g\cdot a\forkindep^{K}_{E}a$. Let $a':=g\cdot a$, then $a', g^{-1} \cdot a'\models p$, $g^{-1}\forkindep^{K}_{E}a'$ and $g^{-1} \cdot a'\forkindep^{K}_{E}a'$, so $g^{-1}\in S^{K}(p)$.
\end{remark}

\subsubsection{Examples}

We give some examples of groups $G$ definable in strictly NSOP$_1$ theories and types $p$ such that $Stab^{K}(p):=S^{K}(p)\cdot S^{K}(p)$.

\vspace{10pt}
\textbf{1} - The group $G=(\mathbb{M},+)$ in $T_{f,ab}$, where $T_{f,ab}$ is the model completion of the theory of abelian groups with a generic unary function $f$ such that $f(0)=0$.

\begin{lemma} For any unary type $p\in S(E)$ over some set $E$ of parameters $Stab^{K}(p)$ is a type-definable subgroup of $G$ and is equal to $S^{K}(p)+S^{K}(p)$. \end{lemma}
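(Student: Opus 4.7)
It is enough to show $Y := S^K(p) + S^K(p)$ is a subgroup of $G$: since $Stab^K(p)$ is the subgroup generated by $S^K(p) \subseteq Y$, this forces $Stab^K(p) = Y$, and the type-definability claim then follows from that of $S^K(p)$ noted after its definition (the sum of two type-definable sets through a definable group operation is type-definable).

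\textbf{Basic closure of $S^K(p)$.} First, $0 \in S^K(p)$ is witnessed trivially by any $a \models p$, since the Kim-independence conditions over $E$ with $0$ are vacuous. For closure under inversion, given $g \in S^K(p)$ witnessed by $a$, set $a' := g + a$; then $a' \models p$, $(-g) + a' = a \models p$, and the two Kim-independence conditions transfer from $g$ to $-g$ because $g$ and $-g$ are $E$-interalgebraic, so they have the same algebraic closure over $E$ and hence the same Kim-independence profile. Together with the Remark after \cref{stabilizateur} this gives that $S^K(p)$ is closed under Kim-independent sums and differences, and so $Y$ is closed under inversion.

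\textbf{Reduction to a three-term sum.} To close $Y$ under addition it suffices, by a short rewriting, to prove the key claim: for any $g_1, g_2, g_3 \in S^K(p)$ we have $g_1 + g_2 + g_3 \in Y$. Indeed, given $y_1 = x_1 + x_2$ and $y_2 = x_3 + x_4$ in $Y$, two applications of the key claim turn $x_1 + x_2 + x_3 + x_4$ into a sum of two elements of $S^K(p)$. To establish the claim, we may assume $S^K(p) \neq \{0\}$ (else the lemma is trivial), and use type-definability of $S^K(p)$ together with Kim-extension to pick $h \in S^K(p)$ with $h \forkindep^K_E g_1, g_2, g_3$. Writing $g_1 + g_2 + g_3 = h + (g_1 + g_2 + g_3 - h)$ then reduces everything to showing $g_1 + g_2 + g_3 - h \in S^K(p)$.

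\textbf{The main obstacle.} The remaining step is the crux: exhibit a realization $\alpha \models p$ with $(g_1 + g_2 + g_3 - h) + \alpha \models p$ and the two Kim-independence conditions over $E$. The plan is to amalgamate the individual witnesses $a_i \models p$ of each $g_i \in S^K(p)$ and $b \models p$ of $h \in S^K(p)$ into a single $\alpha$, exploiting the stationary independence $\forkindep^{\Gamma}$ available in $T_{f,ab}$ and the translation invariance of the global $\Gamma$-generic established earlier in the paper: arranging the $a_i$ and $b$ to be $\forkindep^{\Gamma}$-generic over each other and the relevant parameters, stationarity of $\forkindep^{\Gamma}$ forces the amalgam to be coherent, and translation invariance transports it to a witness for the desired translation. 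The delicate part is ensuring that the resulting $\alpha$ still lies in $p$ and has the correct algebraic-independence profile over $E$; this is where specifics of $T_{f,ab}$ (absent in a general NSOP$_1$ theory) enter, substituting for the stratified local rank argument that handles the analogous step in the simple-theory version.
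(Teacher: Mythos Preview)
Your reduction is set up in the wrong direction, and the ``main obstacle'' paragraph is not a proof but a wish. Writing $g_1+g_2+g_3 = h + (g_1+g_2+g_3-h)$ with $h\forkindep^K_E g_1g_2g_3$ trades a three-term problem for a four-term one: nothing you have arranged about $h$ lets you conclude $g_1+g_2+g_3-h\in S^K(p)$, and your plan to ``amalgamate witnesses'' via $\forkindep^{\Gamma}$ and the translation-invariant $\Gamma$-generic cannot work for an arbitrary $p$, since that global type is one specific type, not $p$. So the argument simply stops at the point you label ``the crux''.

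The paper's proof uses a different decomposition and, crucially, a different independence relation. One introduces $\forkindep^{0}$: $A\forkindep^{0}_E B$ iff $A\forkindep^{a}_E B$ and $f(a+b)=0$ for all $a\in A\setminus E$, $b\in B\setminus E$. This is stationary with existence and extension. Pick $g\equiv_E g_1$ with $g\forkindep^{0}_E g_1g_2g_3$, and split $g_1+g_2+g_3=(g_1-g)+(g+g_2+g_3)$. The first summand is in $S^K(p)$ by a Kim-independent sum. For the second, $g+g_2\in S^K(p)$ for the same reason, and the genuinely new point is that $g+g_2\forkindep^{K}_E g_3$: because $g\forkindep^{0}_E g_2$, every $f$-value on $n(g+g_2)+e$ already lands in $E$, so $\langle E,g+g_2\rangle$ is just the group span $\{n(g+g_2)+e\}$, which is algebraically independent from $g_3$ over $E$ (using $g\forkindep^{a}_E g_2g_3$). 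Then $(g+g_2)+g_3\in S^K(p)$ by another Kim-independent sum. The point you are missing is precisely this control of $acl(E,g+g_2)$ via $\forkindep^{0}$; mere Kim-independence of $h$ from the $g_i$ gives no such control, since $f(h+g_1)$ can land anywhere, in particular inside $acl(Eg_2g_3)$.
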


\begin{proof}
We define the following independence relation: say that two structures $A,B$ extending $E$ are $0$-independent, written $A\forkindep^{0}_{E}B$ if $A\forkindep^{a}_{E}B$ and $f(a+b)=0$ for all elements $a\in A\setminus E$, $b\in B\setminus E$. It is easy to show that $\forkindep^{0}$ satisfies existence, extension, invariance, symmetry, stationarity, transitivity.

\vspace{10pt}
Let $E\subseteq \mathbb{M}$ be a substructure and $p\in S(E)$ be a type. Consider $g_{1},g_{2},g_{3}\in S^{K}(p)$. Consider $g\equiv_{E}g_{1}$ such that $g\forkindep^{0}_{E}g_{1}g_{2}g_{3}$. Then any term in $g+g_{2}$ over $E$ is equal to $n\cdot (g+g_{2}) + e$ for some $n\in \mathbb{Z}$ and $e\in E$. From this it is clear that $g+g_{2}\forkindep^{K}_{E}g_{3}$ since $g\forkindep^{K}_{E}g_{2},g_{3}$. Now $g_{1} - g \in S^{K}(p)$ and $g + g_{2} + g_{3} \in S^{K}(p)$ by \cref{stabilizateur}. So $g_{1}+g_{2}+g_{3} = (g_{1} - g) + (g + g_{2} + g_{3}) \in S^{K}(p)+S^{K}(p)$ and the group generated by $S^{K}(p)$ is the type-definable group $S^{K}(p)+S^{K}(p)$.\end{proof}


\vspace{10pt}
\textbf{2} - Let $T_{F}$ be the theory of an NSOP$_1$ field $\mathbb{F}$ with existence such that $S^{K}(q)+ S^{K}(q)=Stab^{K}(q)$ for every type $q\in (F^{\kappa},+)$ where $\kappa$ is a small cardinal. By \cref{cassimple} this is true when $T_{F}$ is a simple theory. We consider the additive group $(\mathbb{V},+)$ of a monster model $(\mathbb{V},\mathbb{F})$ of the theory $sT^{T_{F}}_{\infty}$ of vector space of infinite dimension over a model of $T_{F}$ with a non-degenerate symmetric bilinear form.

\vspace{10pt}
We will also assume that $T$ satisfies existence, which holds when $T_{F}$ is a simple theory, as we have shown in \cite[Proposition 3.10]{bossut2023note}.

\vspace{10pt}
For all $E\subseteq A,B$ algebraically closed $A\forkindep^{K}_{E}B$ if and only if $V(A)$ and $V(B)$ are linearly independent over $V(E)$ and $F(A)\forkindep^{F}_{F(E)}F(B)$, where $\forkindep^{F}$ is Kim-forking independence in the sense of the theory of the field $T_{F}$.

\begin{lemma}\label{kimstabbilinear}
Consider a unary type $p(x)\in S(E)$ in the additive group $(\mathbb{V},+)$, where $E$ is an algebraically closed set of parameters. Then the equality $Stab^{K}(p):=S^{K}(p)\cdot S^{K}(p)$ holds.\end{lemma}

\begin{proof}
Fix $(e_{i})_{i<\kappa}$ a $K(E)$ basis of $V(E)$. If $p$ specifies that $x\in \langle E\rangle$, let $v\models p$. Then $v=\sum\limits_{i<\kappa} \lambda_{i} \cdot e_{i}$. Consider the type $q((y_{i})_{i<\kappa}):=\tp((\lambda)_{i<\kappa}/K(E))$. It is easy to see from the characterization of Kim-forking that $g\in S^{K}(p)$ if and only if $g = \sum\limits_{i<\kappa} \lambda'_{i} \cdot e_{i}$ and $(\lambda'_{i})_{i<\kappa}\in S^{K}(q)$. From this we deduce that $g \in S^{K}(p)$ if and only if $g = \sum\limits_{i<\kappa} \lambda'_{i} \cdot e_{i}$ and $(\lambda'_{i})_{i<\kappa}\in S^{K}(q)$, so $Stab^{K}(p)=S^{K}(p)\cdot S^{K}(p)$ since $Stab^{K}(q)=S^{K}(q)\cdot S^{K}(q)$ by assumption.

\vspace{10pt}
In the other case $p$ specifies that $x\centernot\in \langle E\rangle$. Let $v\models p$ and consider the type $$q((y_{i})_{i<\kappa}):=\tp(([v,e_{i}])_{i<\kappa}/K(E)).$$

\vspace{10pt}
\textbf{Claim:} $u\in S^{K}(p)$ if and only if $([u,e_{i}])_{i<\kappa}\in S^{K}(q)$ for every $u\in \mathbb{V}$.

\vspace{10pt}
\textit{Proof:} The inclusion $\subseteq$ is clear, for the other direction consider $u\in \mathbb{V}$ such that $([u,e_{i}])_{i<\kappa}\in S^{K}(q)$. There is $(\lambda_{i})_{i<\kappa}\models q$ such that $(\lambda_{i}+[u,e_{i}])_{i<\kappa}\equiv_{K(E)}(\lambda_{i})_{i<\kappa}$, $(\lambda_{i})_{i<\kappa}\forkindep^{F}_{E}([u,e_{i}])_{i<\kappa}$ and $(\lambda_{i}+[u,e_{i}])_{i<\kappa}\forkindep^{F}_{E}(\lambda_{i})_{i<\kappa}$. By extension we can assume that $(\lambda_{i})_{i<\kappa}\forkindep^{F}_{E([u,e_{i}])_{i<\kappa}}K(acl(E,u))$, so $(\lambda_{i})_{i<\kappa}\forkindep^{F}_{E}K(acl(E,u))$ by transitivity.

\vspace{10pt}
Let $w'\models p$. Then $([w',e_{i}])_{i<\kappa}\equiv_{K(E)}(\lambda_{i})_{i<\kappa}$, so there is $\alpha, \beta \in \mathbb{K}$ such that: $$([w',e_{i}])_{i<\kappa}[w',w']\equiv_{K(E)}(\lambda_{i})_{i<\kappa}\alpha \equiv_{K(E)}(\lambda_{i}+[u,e_{i}])_{i<\kappa}\beta .$$ Similarly since $(\lambda_{i})_{i<\kappa}\forkindep^{F}_{E}K(acl(E,u))$ and $K(acl(E,u))\forkindep^{F}_{E}(\lambda_{i}+[u,e_{i}])_{i<\kappa}$ by extension we can assume that $(\lambda_{i})_{i<\kappa}\alpha\forkindep^{F}_{E}K(acl(E,u))$ and $K(acl(E,u))\forkindep^{F}_{E}(\lambda_{i}+[u,e_{i}])_{i<\kappa}\beta $.

\vspace{10pt}
By \cite[Lemma 2.1.1]{bossut2023note} we can find $w\in \mathbb{V}$ such that:

\begin{enumerate}
    \item $w\centernot\in \langle Eu \rangle$,
    \item $[w,e_{i}]=\lambda_{i}$ for every $i<\kappa$,
    \item $[w,w]= \alpha$,
    \item $[w,u]= \frac{1}{2}\cdot ( [u,u]-\alpha - \beta )$.
\end{enumerate}

Then $[w+u,w+u]=\beta$. By quantifier elimination and the characterization of Kim-forking it is clear that $u\forkindep^{K}_{E}w$, $u\forkindep^{K}_{E}w+u$ and $w+u\equiv_{E}w\equiv_{E}w'$, so $u\in S^{K}(p)$.\hspace*{0pt}\hfill\qedsymbol{}

\vspace{10pt}
As in the case of $p\in \langle E \rangle$ we deduce by the assumption on $Stab^{K}(q)$ in the field that $Stab^{K}(p)=S^{K}(p)\cdot S^{K}(p)$.
\end{proof}

A similar proof of the \textbf{Claim} would work in the context of the additive group $(\mathbb{V}^{n},+)$ and the semi-direct product $\mathbb{V}^{n}\rtimes GL_{n}(\mathbb{K)}$.
\vspace{10pt}

\textbf{3} - Type-definable groups of finite arity in $\omega$-free PAC fields of characteristic $0$. We will need the following results:

\begin{theorem}\label{caractkimforkomegafree} Let $E=acl(E)$, $E \subseteq A,B$ for $i=0,1$ be algebraically closed subsets of an $\omega$-free PAC field $F$ of characteristic $0$. Then $A\forkindep^{K}_{E}B$ if and only if $A$ and $B$ are linearly disjoint over $E$ and $A^{acl}B^{acl} \cap F=AB$.
\end{theorem}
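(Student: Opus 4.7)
The plan is to combine the witnessing criterion \cref{witnessing2} with Chatzidakis' description of types in $\omega$-free PAC fields: over algebraically closed parameters, a type is determined by the isomorphism type of the associated regular field extension together with the Galois-theoretic amalgamation data, and the freeness of the absolute Galois group of $F$ as a profinite group of countable rank provides ample room to amalgamate. The two directions will be handled asymmetrically: the forward direction by a direct contradiction argument using Kim-Morley sequences, and the backward direction by an axiomatic approach.

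For the forward direction, suppose $A \forkindep^{K}_{E} B$. Kim-independence restricted to the field reduct forces algebraic independence of the generated subfields over $E$; combined with $E = acl(E)$, this gives linear disjointness of $A$ and $B$ over $E$. For the second condition, I argue by contradiction: suppose there exists $\alpha \in (A^{acl} B^{acl} \cap F) \setminus AB$. By \cref{witnessing2}, fix a Kim-Morley sequence $(A_{i})_{i<\omega}$ over $E$ with $A_{0} = A$ which is indiscernible over $B$. Indiscernibility produces a corresponding sequence $(\alpha_{i})_{i<\omega}$ with $\alpha_{i} \in (A_{i}^{acl} B^{acl} \cap F) \setminus A_{i} B$. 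Mutual linear disjointness of the $A_{i}$ over $E$, together with the Galois-theoretic independence of the $A_{i}^{acl}$ over $E$ that the Kim-Morley property forces, should pin all $\alpha_{i}$ down to a common element of $B^{acl} \cap F = B$, contradicting $\alpha \notin AB$.

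For the backward direction, the plan is axiomatic. Let $\forkindep^{*}$ denote the relation on algebraically closed subsets of $F$ defined by: $A \forkindep^{*}_{E} B$ iff $A$ and $B$ are linearly disjoint over $E$ and $A^{acl} B^{acl} \cap F = AB$. The goal is to verify that $\forkindep^{*}$ satisfies invariance, symmetry, monotonicity, finite character, existence, and the independence theorem over algebraically closed sets. The first few are elementary consequences of standard field theory and the definition of $\forkindep^{*}$; the independence theorem reduces to a Galois-theoretic amalgamation problem solvable using the freeness of $\Gal(F)$. By the standard axiomatic characterization of Kim-independence in NSOP$_{1}$ theories with existence, $\forkindep^{*}$ then coincides with $\forkindep^{K}$ on algebraically closed sets, which gives the backward direction and completes the equivalence.

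The main obstacle is the Galois-theoretic amalgamation needed to establish the independence theorem for $\forkindep^{*}$. Given three algebraically closed sets with prescribed compatible types, one must simultaneously realize them inside $F$ while preserving both linear disjointness and the regularity condition $A^{acl} B^{acl} \cap F = AB$. This reduces to lifting a compatible family of finite quotients of $\Gal(F)$ to a coherent amalgam; the freeness of the absolute Galois group provides the necessary sections, and the characteristic-$0$ hypothesis eliminates inseparability worries. This Galois-theoretic step is the technical core of the proof and will require the bulk of the work, while the field-theoretic parts (linear disjointness, regularity) are comparatively routine.
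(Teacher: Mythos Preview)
Your backward-direction plan is essentially the paper's proof. The paper does not argue from scratch: it cites that the characterization is already established when $E$ is a model (\cite[Theorem~9.33]{kaplan2020kim}) and then invokes the Kim--Pillay style characterization of Kim-forking over arbitrary sets (\cite[Theorem~5.1]{chernikov2023transitivitylownessranksnsop1}) to pass from models to algebraically closed $E$. Your proposal to verify the axioms for $\forkindep^{*}$ and apply the axiomatic criterion is exactly the content behind that second citation, so on this side you are reproducing the literature rather than doing something new; that is fine, but be aware that the Galois-theoretic amalgamation you flag as the ``technical core'' is precisely what Chatzidakis already carried out (and is the substance of \cref{obois} and its surrounding lemmas), so you should cite rather than redo it.

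Two further remarks. First, once the axioms are verified the criterion yields $\forkindep^{*}=\forkindep^{K}$ outright, not merely one implication; your separate forward-direction argument is therefore redundant. Second, that forward-direction sketch has a real gap. From a $B$-indiscernible Kim-Morley sequence $(A_{i})_{i<\omega}$ over $E$ and elements $\alpha_{i}\in (A_{i}^{alg}B^{alg}\cap F)\setminus A_{i}B$, there is no evident mechanism forcing the $\alpha_{i}$ to coincide or to lie in $B^{alg}\cap F$. Linear disjointness of the $A_{i}$ over $E$ controls the compositum of the $A_{i}$ themselves, not intersections of their field-theoretic algebraic closures with $B^{alg}$ inside $F$; and the phrase ``Galois-theoretic independence of the $A_{i}^{alg}$ over $E$ that the Kim-Morley property forces'' is begging the question, since that independence is exactly what the theorem is meant to establish. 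Drop this argument and let the axiomatic route carry both directions.
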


\begin{proof} Kaplan and Ramsey prove in \cite[Theorem 9.33]{kaplan2020kim} that if $E$ is a model Kim-independence over $E$ is equivalent to weak independence over $E$, a notion defined in \cite[Definition 1.2]{Chatzidakis2002PropertiesOF} By Chatzidakis. Chatzidakis proves in \cite[Lemma 2.8]{Chatzidakis2002PropertiesOF} that in an $\omega$-free PAC-field $F$ if $E\subseteq A,B$ are algebraically closed subfields of $F$ then $A$ and $B$ are weakly independent  over $E$ if and only if  $A$ and $B$ are linearly disjoint over $E$ and $A^{acl}B^{acl} \cap F=AB$. We can extend the result of Kaplan and Ramsey to $E$ algebraically closed by using the Kim-Pillay characterization of Kim-forking over arbitrary sets \cite[Theorem 5.1]{chernikov2023transitivitylownessranksnsop1}.
\end{proof}

\begin{lemma}\cite[Lemma 3.2]{Chatzidakis2002PropertiesOF}\label{obois} Let $E=acl(E)$, $E \subseteq A_{i},B_{i}$ for $i=0,1$ be algebraically closed subsets of an $\omega$-free PAC field $F$ such that $A_{i}\forkindep^{K}_{E}B_{i}$ for $i=0,1$, $A_{0}\equiv_{E}A_{1}$ and $B_{0}\forkindep^{ACF}_{E}B_{1}$. Then there is $A\equiv_{B_{i}}A_{i}$ for $i=0,1$ such that $A\forkindep^{K}_{E}B_{0}B_{1}$.
\end{lemma}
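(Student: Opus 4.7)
The plan is to reduce the statement via the characterization of Kim-forking in \cref{caractkimforkomegafree}: to produce the required $A$ it suffices to arrange (i) $A\forkindep^{ACF}_{E}B_{0}B_{1}$ and (ii) $acl(EA)\cdot acl(EB_{0}B_{1})\cap F = A\cdot B_{0}\cdot B_{1}$, with $\tp(A/B_{i})=\tp(A_{i}/B_{i})$ for $i=0,1$. These three conditions correspond respectively to an ACF-amalgamation step and a Galois-theoretic amalgamation step, and it is in the latter that $\omega$-freeness of the absolute Galois group $\Gal(F)$ will be used.

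First I would perform the ACF-amalgamation. Since $A_{0}\equiv_{E}A_{1}$ entails $A_{0}\equiv^{ACF}_{E}A_{1}$, and since $B_{0}\forkindep^{ACF}_{E}B_{1}$ is assumed, stability of ACF together with the classical independence theorem for algebraically closed fields produces some $A^{\ast}$ with $A^{\ast}\equiv^{ACF}_{B_{i}}A_{i}$ for $i=0,1$ and $A^{\ast}\forkindep^{ACF}_{E}B_{0}B_{1}$. This already secures condition (i) and the ACF-part of the correct types over the $B_{i}$.

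Upgrading $A^{\ast}$ to a genuine PAC-amalgam $A$ is the substantial step. Types over algebraically closed subsets of an $\omega$-free PAC field of characteristic $0$ are classified, via the Cherlin--van den Dries--Macintyre style theory, by the ACF-type together with a profinite Galois-theoretic invariant: essentially the conjugacy class of the restriction map sending the relevant relative absolute Galois group into $\Gal(F)\cong \hat{F}_{\omega}$. The hypothesis $A_{i}\forkindep^{K}_{E}B_{i}$ unpacks through \cref{caractkimforkomegafree} as $acl(EA_{i})\cdot acl(EB_{i})\cap F = A_{i}B_{i}$, so the two sides are already maximally disjoint as Galois extensions over $acl(E)$; combined with $B_{0}\forkindep^{ACF}_{E}B_{1}$ this arranges the data into a compatible diagram of profinite groups over $\Gal(acl(E)/E)$ admitting a free amalgamated product.

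The main obstacle is to embed this free amalgamated product back into $\Gal(F)$ compatibly with the embeddings already fixed by $\tp(A_{i}/B_{i})$ for $i=0,1$ and by the given types of $B_{0}$ and $B_{1}$ over $E$; this is a profinite embedding problem of exactly the type that $\omega$-freeness is designed to solve, $\hat{F}_{\omega}$ being projective and admitting arbitrary finite embedding problems with a compatibility constraint on a closed subgroup. Once such an embedding is produced, it determines an $A$ realizing $\tp(A_{i}/B_{i})$ for $i=0,1$, and by construction the composite $acl(EA)\cdot acl(EB_{0}B_{1})$ contains no new elements of $F$ beyond $A\cdot B_{0}\cdot B_{1}$, which together with (i) yields $A\forkindep^{K}_{E}B_{0}B_{1}$ via \cref{caractkimforkomegafree}.
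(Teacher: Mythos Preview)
The paper does not prove this lemma; it is simply quoted from \cite[Lemma 3.2]{Chatzidakis2002PropertiesOF} with no argument given. So there is no ``paper's own proof'' to compare against here.

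That said, your sketch is the right one and is essentially Chatzidakis's original argument: reduce via the description of types in $\omega$-free PAC fields to an ACF-amalgamation (handled by stationarity of ACF-types over algebraically closed sets, using that $A_i\forkindep^{K}_{E}B_i$ forces $A_i\forkindep^{ACF}_{E}B_i$) followed by a Galois-theoretic amalgamation, where the embedding problem is solved using the Iwasawa/embedding property of $\hat{F}_{\omega}$. One small remark: in the ACF step you do not really need an ``independence theorem'' but only stationarity over $E=acl(E)$; and in the Galois step you should be explicit that the data to be amalgamated are the restriction maps $\mathcal{G}(A_iB_i)\to \mathcal{G}(E)$ and that the Kim-independence hypothesis is exactly what makes the two pieces over $B_0$ and $B_1$ sit freely over $\mathcal{G}(E)$, so that the coproduct embedding problem is well-posed. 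With those clarifications your outline matches the cited proof.
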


\begin{remark}\label{prodgeneric}\cref{obois} together with \cref{amalgweaker} implies that if $p$ is a type in a type-definable group of finite arity then $S^{K}(p)$ is stable by $\forkindep^{ACF}$ independent product. The following proof uses this and the fact that $\forkindep^{ACF}$ has generic elements in $G$, namely the elements of maximal transcendence degree over $E$.

\end{remark}
\begin{lemma}\label{stabomegafree} Consider a Lascar strong type $p\in S(E)$ in a group $(G,\cdot)$ which is finitary and type-definable over $E$ in a $\omega$-free PAC field $F\supseteq E$. Then $Stab^{K}(p):=S^{K}(p)\cdot S^{K}(p)$.
\end{lemma}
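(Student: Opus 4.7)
The plan is to adapt the template used for Example 1 (the theory $T_{f,ab}$), exploiting the two ingredients highlighted in \cref{prodgeneric}: the stability of $S^{K}(p)$ under $\forkindep^{ACF}$-independent products, and the availability of ACF-generic elements in $G$ (elements of maximal transcendence degree over the base). Since $S^{K}(p)$ is closed under taking inverses, $S^{K}(p) \cdot S^{K}(p)$ is a symmetric set containing the identity; to show it equals the subgroup $Stab^{K}(p)$ it generates, it suffices to show $S^{K}(p) \cdot S^{K}(p) \cdot S^{K}(p) \subseteq S^{K}(p) \cdot S^{K}(p)$, i.e.\ that every triple product $g_{1} g_{2} g_{3}$ with $g_{i} \in S^{K}(p)$ lies in $S^{K}(p) \cdot S^{K}(p)$.

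Given such $g_{1}, g_{2}, g_{3}$, I plan to introduce an auxiliary $g \in G$ and write
\[
g_{1} g_{2} g_{3} = (g_{1} g^{-1}) \cdot (g g_{2} g_{3}),
\]
arranging both factors to lie in $S^{K}(p)$. The element $g$ would be chosen with $g \equiv^{L}_{E} g_{1}$, so that $g \in S^{K}(p)$ by $E$-invariance of this property, and with $g \forkindep^{ACF}_{E} g_{1} g_{2} g_{3}$ via the extension property for $\forkindep^{ACF}$. Then $g_{1} g^{-1} \in S^{K}(p)$ by \cref{prodgeneric} applied to the ACF-independent pair $g_{1}, g^{-1}$, and $g g_{2} \in S^{K}(p)$ by \cref{prodgeneric} applied to $g, g_{2}$ (using $g \forkindep^{ACF}_{E} g_{2}$). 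The final step is to deduce $(g g_{2}) g_{3} \in S^{K}(p)$ via \cref{prodgeneric}, which requires the independence $g g_{2} \forkindep^{ACF}_{E} g_{3}$.

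This last ACF-independence is the main obstacle, and it is precisely where the existence of ACF-generic elements in $G$ comes in. When $g$ can be arranged to be of maximal transcendence degree in $G$ over $E g_{1} g_{2} g_{3}$, translation invariance of transcendence degree yields that $g g_{2}$ is itself ACF-generic over $E g_{1} g_{2} g_{3}$, hence $g g_{2} \forkindep^{ACF}_{E} g_{3}$, and the decomposition goes through. When $\tp(g_{1}/E)$ is not itself ACF-generic the direct arrangement fails, and I expect the resolution to be a preliminary reduction: one translates $(g_{1}, g_{2}, g_{3})$ by ACF-generic elements of $G$, inserting compensating $S^{K}(p)$-factors via \cref{prodgeneric}, so as to reduce to the case where the role of $g_{1}$ is played by an element of maximal transcendence, at which point the direct argument above applies and produces the required factorization into two elements of $S^{K}(p)$.
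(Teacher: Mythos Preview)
Your overall strategy --- decompose $g_{1}g_{2}g_{3}=(g_{1}g^{-1})(gg_{2}g_{3})$ and show both factors lie in $S^{K}(p)$ --- is exactly the paper's. The gap is the constraint $g\equiv^{L}_{E}g_{1}$, which you impose only to guarantee $g\in S^{K}(p)$. This forces $\trdeg(g/E)=\trdeg(g_{1}/E)$, so the transcendence-degree argument for $gg_{2}\forkindep^{ACF}_{E}g_{3}$ works only when $g_{1}$ already has maximal transcendence degree. Your proposed ``preliminary reduction'' to that case is left vague, and it is not clear how translating by ACF-generic elements of $G$ (which need not lie in $S^{K}(p)$) would produce compensating $S^{K}(p)$-factors.

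The paper's fix is simpler than the reduction you anticipate: drop the constraint $g\equiv^{L}_{E}g_{1}$ altogether. Instead pick $g\in S^{K}(p)$ of \emph{maximal transcendence degree in $S^{K}(p)$} (such an element exists since $\trdeg$ is bounded by $\trdeg(p)$), then use existence to arrange $g\forkindep^{K}_{E}g_{1}g_{2}g_{3}$. Now $g_{1}g^{-1}\in S^{K}(p)$ and $gg_{2}\in S^{K}(p)$ as before, and the key independence follows directly: $\trdeg(gg_{2}/Eg_{2}g_{3})=\trdeg(g/Eg_{2}g_{3})=\trdeg(g/E)$ is maximal in $S^{K}(p)$, while $gg_{2}\in S^{K}(p)$ gives $\trdeg(gg_{2}/E)\leq\trdeg(g/E)$, so $\trdeg(gg_{2}/E)=\trdeg(gg_{2}/Eg_{3})$ and $gg_{2}\forkindep^{ACF}_{E}g_{3}$. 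Note that the maximality is taken \emph{within $S^{K}(p)$}, not within $G$; this is what makes the upper bound on $\trdeg(gg_{2}/E)$ available.
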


\begin{proof}
Notice that if the transcendence degree of the type $p$ over $E$ is $n<\omega$ then the elements of $S^{K}(p)$ have transcendence degree $\leq n$ over $E$. Consider $g_{i} \in S^{K}(p)$ for $i=0,1,2$. Consider $g\in S^{K}(p)$ of maximal transcendence degree over $E$ such that $g\forkindep^{K}_{E}g_{0},g_{1},g_{2}$. Since $g\forkindep^{K}_{E}g_{0}$ we know that $g_{0}\cdot g^{-1}\in S^{K}(p)$ by \cref{stabilizateur} ($g^{-1}\in S^{K}(p)$ since $S^{K}(p)$ is closed by inversion - \cref{goulougoulou}). We want to show that $g\cdot g_{1}\cdot g_{2}\in S^{K}(p)$.

\vspace{10pt}
By \cref{stabilizateur} we know that $g\cdot g_{1} \in S^{K}(p)$. We show that $g\cdot g_{1}\forkindep^{ACF}_{E}g_{2}$, where $\forkindep^{ACF}_{E}$ is forking independence in the sense of the algebraic closure. This follows from the fact that $g\cdot g_{1}$ and $g$ are interalgebraic over $Eg_{1}g_{2}$ and that $g\forkindep^{ACF}_{E}g_{1}g_{2}$: $\trdeg(g\cdot g_{1}/E) \geq \trdeg(g\cdot g_{1}/Eg_{1}g_{2})=\trdeg(g/Eg_{1}g_{2})=\trdeg(g/E)$ is maximal in $S^{K}(p)$, so $\trdeg(g\cdot g_{1}/E)=\trdeg(g\cdot g_{1}/Eg_{2})$ and $g\cdot g_{1}\forkindep^{ACF}_{E}g_{2}$.

\vspace{10pt}
Then \cref{prodgeneric} yields that $g\cdot g_{1}\cdot g_{2}\in S^{K}(p)$, so $g_{0}\cdot g_{1}\cdot g_{2}\in S^{K}(p) \cdot S^{K}(p)$ and $Stab^{K}(p) = S^{K}(p)\cdot S^{K}(p)$.
\end{proof}

\begin{remark} The previous proof relies on the finiteness of the transcendence degree over $E$ to show that $g\cdot g_{1}\forkindep^{ACF}_{E}g_{2}$. In the more general case of a group of infinite arity this argument would not work.
\end{remark}

\subsubsection{A Counter-Example}

We now show that the Kim-stabilizer does not satisfy certain properties of the $f$-stabilizer in simple theories. For this we consider the example of the additive group $(\mathbb{V},+)$ in $sT^{ACF}_{\infty}$.

\vspace{10pt}
Let $(e_{i})_{i<\omega}$ be a family of linearly independent vectors. For $n<\omega$ let $p_{n}\in S(e_{<n})$ be the type defined by $x\centernot \in \langle e_{<n} \rangle$, $[x,x]=0$ and $[x,e_{i}]=0$ for all $i<n$. $p_{m}$ is a non-Kim-forking extension of $p_{n}$ for all $n<m$. From the proof of \cref{kimstabbilinear} it is easy to see that: 
\begin{center}

$Stab^{K}(p_{n}) = \lbrace x$ : $[x,e_{i}]=0$ for all $i<n \rbrace$.
\end{center}

We can then notice two things happening in this case:

\begin{enumerate}
    \item[1 -] $Stab^{K}(p_{n})$ is not a subgroup of bounded index of $Stab^{K}(p_{m})$ when $n<m$.
    \item[2 -] There is $v\in Stab^{K}(p_{m})$ such that $v\forkindep^{K}_{e_{<n}}e_{<m}$ and such that $v + Stab^{K}(p_{m})$ seen as an hyperimaginary is not in the bounded closure of $e_{<m}$. In fact it is easy to see that $v + Stab^{K}(p_{m}) \in bdd(A)$ for some set $A$ if and only if $[v,e_{i}]\in bdd(A)$ for all $i<m$.
\end{enumerate}

These two things can not happen in a simple group. After seeing this we are left with the following questions. Let $G$ be a group type-definable over $E$ in an NSOP$_1$ theory with existence: 

\begin{enumerate}
    \item[1 -] If $p\in S_{G}(A)$ and $E\subseteq A \subseteq B$ is there a non-Kim-forking extension $q\in S_{G}(B)$ of $p$ such that $Stab^{K}(q)$ has bounded index in $Stab^{K}(p)$?
    \item[2 -] Is there a type $p\in S_{G}(E)$ such that $Stab^{K}(p)$ has bounded index in $G$?
\end{enumerate}

$1$ holds in the group $(\mathbb{V},+)$ in $sT^{ACF}_{\infty}$: Given any type $p\in S_{\mathbb{V}}(A)$ if $q$ is the $\forkindep^{\Gamma}$-independent extension of $p$ to $B$ then $Stab^{K}(p)=Stab^{K}(q)$.

\section{Stabilizer and expansion of the language}

In this subsection we define a set of conditions on a field $F$ and on a group $G$ type-definable in $F$. We show that under these conditions there is a definable finite to one homomorphism from a type-definable subgroup of $G$ of bounded index to an algebraic group $H$.

\subsection{S1 ideals of formulas}

We begin by recalling the notions of $S1$ ideal and wide types from \cite{hrushovski2011stablegrouptheoryapproximate}.

\begin{definition} Let $X$ be a definable set over some set of parameters $A$. Let $\mu$ be an $A$-invariant ideal of definable subsets of $X$. $\mu$ is said to be $S1$ if $\varphi(x,a_{0})\wedge \varphi(x,a_{1})\in \mu$ if and only if $\varphi(x,a_{0})\in \mu$ for every formula $\varphi (x,y)$ and $A$-indiscernible sequence $(a_{i})_{i<\omega}$.
\end{definition}

\begin{definition} Let $\mu$ be an ideal. A definable set $X$ is medium if $\mu$ is $S1$ when restricted to $X$.
\end{definition}

\begin{definition} Let $\mu$ be an ideal. A partial type $\pi(x)$ is said to be $\mu$-wide (or just wide if there is no ambiguity) if it does not imply a formula in $\mu$. A type is medium if it concentrates on a medium set.
\end{definition}

\begin{lemma}\cite[Lemma 2.9]{hrushovski2011stablegrouptheoryapproximate} \label{wideimpiesnonfork} Let $\mu$ be an $A$-invariant ideal which is $S1$ on some $A$-definable set $X$. Then any wide type $p$ concentrating in $X$ does not fork over $A$.
\end{lemma}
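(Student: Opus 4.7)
The plan is to prove the contrapositive: suppose $p$ forks over $A$ and derive that $p$ implies a formula in $\mu$, contradicting wideness. The argument reduces to a sub-claim that every formula $\varphi(x,b)$ which divides over $A$ and implies $x\in X$ lies in $\mu$. Granting the sub-claim, if $p$ forks over $A$ then $p\vdash\bigvee_{i<n}\psi_{i}(x,b_{i})$ for some formulas $\psi_{i}$ each dividing over $A$; intersecting each disjunct with $X$ (permissible since $p$ concentrates in $X$) gives a dividing formula contained in $X$, so the sub-claim places each disjunct in $\mu$, and closure under disjunction places the disjunction in $\mu$. Hence $p$ implies a formula in $\mu$, contradicting wideness.

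For the sub-claim, fix an $A$-indiscernible witness $(b_{i})_{i<\omega}$ with $b_{0}=b$ such that $\{\varphi(x,b_{i})\}_{i}$ is $k$-inconsistent. I prove, by induction on $m\geq 2$, the statement: for any $A$-indiscernible sequence $(d_{i})_{i<\omega}$ with each $\varphi(x,d_{i})$ contained in $X$, if $\bigwedge_{i<m}\varphi(x,d_{i})\in\mu$ then $\varphi(x,d_{0})\in\mu$. The base case $m=2$ is exactly the $S1$ property of $\mu$ applied to $\varphi$ and $(d_{i})$. For the inductive step $m>2$, introduce the tuple formula $\psi(x,y_{0},\ldots,y_{m-2}):=\bigwedge_{j<m-1}\varphi(x,y_{j})$ and the sequence of $(m-1)$-blocks $c_{\ell}:=(d_{\ell(m-1)},\ldots,d_{\ell(m-1)+m-2})$, which inherits $A$-indiscernibility from $(d_{i})$. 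Then $\psi(x,c_{0})\wedge\psi(x,c_{1})$ is a conjunction of $2(m-1)\geq m$ consecutive instances of $\varphi$ drawn from $(d_{i})$, so it implies $\bigwedge_{i<m}\varphi(x,d_{i})$ and therefore lies in $\mu$ by downward closure of the ideal. Applying the $S1$ property to $\psi$ and $(c_{\ell})$ yields $\psi(x,c_{0})=\bigwedge_{i<m-1}\varphi(x,d_{i})\in\mu$, and the inductive hypothesis at $m-1$ concludes. Applied to the original witness, the conjunction $\bigwedge_{i<k}\varphi(x,b_{i})$ is empty, hence in $\mu$, and the induction yields $\varphi(x,b)\in\mu$.

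The main obstacle is precisely this reduction from $k$-inconsistency to the pairwise case governed by $S1$: one has to repackage $m-1$ conjuncts into a single tuple formula $\psi$ and extract an $A$-indiscernible sequence of $(m-1)$-blocks from the original witness, all while checking that the new auxiliary conjunctions remain in $\mu$ via containment. Once this packaging is in place, the passage from dividing to forking and the conclusion that $p$ cannot be wide are routine consequences of the closure of $\mu$ under finite disjunctions.
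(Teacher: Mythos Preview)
Your argument is correct. The paper does not supply its own proof of this lemma; it merely cites \cite[Lemma 2.9]{hrushovski2011stablegrouptheoryapproximate}. Your proof is precisely the standard one from that reference: reduce forking to a finite disjunction of dividing formulas, intersect with $X$ so that the $S1$ hypothesis applies, and then use the block-repackaging induction to pass from $k$-inconsistency down to the pairwise case governed by $S1$. The only point worth flagging is terminological: when you invoke ``downward closure of the ideal'' you are using the standard convention (ideals of definable sets are closed under taking definable subsets), whereas the informal description of ideals in the paper's introduction has the implication written in the other direction; your usage is the correct one for Hrushovski's setting and is what makes the argument go through.
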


\begin{remark}\label{boundedindex} If $G$ is a type-definable group over $A$, $\mu$ is an ideal $S1$ on $G$,  $A$ invariant and invariant under translations. Then any wide subgroup $H\leq G$ type-definable over $A$ has bounded index in $G$.
\end{remark}

\begin{proof} If $H\leq G$ does not have bounded index there is an $A$-indiscernible sequence $(a_{i})_{i<\omega}$ such that all of the $a_{i}$ are in different cosets of $H$. Let $\pi(x)$ be a partial type over $A$ defining $H$. Then $\pi(a_{i}^{-1}\cdot x)\wedge \pi(a_{j}^{-1}\cdot x)$ is inconsistent for every $i\centernot=j <\omega$.

\vspace{10pt}
By indiscernibility and compactness there is a formula $\varphi(x)\in \pi(x)$ such that $\varphi(a_{i}^{-1}\cdot x)\wedge \varphi(a_{j}^{-1}\cdot x)$ is inconsistent. So $\varphi(a_{0}^{-1}\cdot x)\wedge \varphi(a_{1}^{-1}\cdot x)\in \mu$, and $\varphi(a_{0}^{-1}\cdot x)\in \mu$ since $\mu$ is $S1$. Since $\mu$ is invariant under translation $\varphi(x)\in \mu$ which contradicts the fact that $H$ is wide.
\end{proof}

In this context Hrushovski defines the following notion of stabilizer:

\begin{definition} Let $G$ be a group type-definable over $A$. Let $\mu$ be an $A$-invariant ideal in $G$ which is invariant under translations by elements of $G$. If $p\in G$ is a wide type we define:
\begin{center}
$S^{\mu}(p):=\lbrace g$ : $p \cdot g \cap p$ is wide$\rbrace$. 
\end{center}

We define $Stab^{\mu}(p)$ as the subgroup of $G$ generated by $S^{\mu}(p)$.
\end{definition}

\begin{definition} Let $p$ and $q$ be two types over some model $M$. We define:
\begin{center}
    $p\times_{nf} q:= \lbrace (a,b)$ : $a\models p,b\models q, b\forkindep^{f}_{M}a \rbrace$
\end{center}
\end{definition}

We will use the two following results:

\begin{theorem} \cite[Theorem 2.15]{montenegro2018stabilizersgroupsfgenericsntp2}\label{stabextended} Let $\mu$ and $\lambda$ be $M$ invariant ideals of $G$ that are invariant under left and right translation and such that $\mu$ is $S1$ in any $X\in \lambda$. Assume that we are given a wide and medium type $p$ in $G$ and that the following conditions are satisfied:

\begin{enumerate}
    \item[(A)] for any types $q,r$, if for some $(c,d)\models q \times_{nf} r$, $\tp(cd/M)$ or $\tp(dc/M)$ is $\lambda$-medium then $q$ is $\lambda$-medium;
    \item[(B)] for any $(a,b)\models p\times_{nf}p$, $\tp(a^{-1}b/M)$ is $\lambda$-wide.
    \item[(F)] there are $(a,b)\models p\times_{nf}p$ such that $\tp(a/Mb)$ does not fork over $M$.
\end{enumerate}

Then $Stab^{\mu}(p)=S^{\mu}(p)^{2}=(pp^{-1})^{2}$ is a type-definable, wide and $\lambda$-medium group. Also $Stab^{\mu}(p)\setminus S^{\mu}(p)$ is contained in a union of non-wide $M$-definable sets.
\end{theorem}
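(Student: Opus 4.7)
The plan is to follow Hrushovski's stabilizer template from \cite{hrushovski2011stablegrouptheoryapproximate}, tracking both ideals simultaneously: $\mu$ supplies the $S1$ structure and the meaningful notion of ``large intersection'', while $\lambda$-mediumness marks the sets on which $\mu$ is well-behaved. First I would establish $S^{\mu}(p) \subseteq pp^{-1}$ directly from the definition: if $g \in S^{\mu}(p)$ then $p \cdot g \cap p$ is $\mu$-wide, hence non-empty, so some $a \models p$ has $ag \models p$, whence $g \in p^{-1}p$ (up to the obvious left/right convention). Conversely, (F) gives $(a,b) \models p \times_{nf} p$ and (B) makes $\tp(a^{-1}b/M)$ $\lambda$-wide; \cref{wideimpiesnonfork} then upgrades this to non-forking and so to $\mu$-wideness of the corresponding intersection, placing $a^{-1}b$ in $S^{\mu}(p)$. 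This pins down $S^{\mu}(p)$ as the ``generic piece'' of $pp^{-1}$ and, together with the stabilizer-collapse below, yields $Stab^{\mu}(p) = S^{\mu}(p)^{2} = (pp^{-1})^{2}$.

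The central combinatorial step is the stabilizer collapse $Stab^{\mu}(p) = S^{\mu}(p)^{2}$. For $g_{1}, g_{2} \in S^{\mu}(p)$ I would pick $h \models p$ forking-independent from $(g_{1},g_{2})$ over $M$ — possible because $p$ is wide and forking-free extensions of $p$ exist by (F) — and write $g_{1}g_{2} = (g_{1}h^{-1})(hg_{2})$. Both factors are then shown to lie in $S^{\mu}(p)$: the genericity of $h$ transports the $\mu$-wideness of $ph \cap p$ through multiplication by $g_{1}^{-1}$ and by $g_{2}$ respectively, and condition (A) is invoked to keep $\lambda$-mediumness along these translates so that the $S1$ property still applies and $\mu$-wideness of the intersection survives. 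This is the main obstacle: reconciling the two-sided translation invariance of $\mu$ with the asymmetric forking-free choice of the interpolating element $h$, and making (A) simultaneously preserve $\lambda$-mediumness on both factors, requires careful bookkeeping of which conditions are used where.

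Type-definability of $S^{\mu}(p)$ is then routine from the definitions: $g \notin S^{\mu}(p)$ iff there is some $\varphi \in p$ with $\varphi(x) \wedge \varphi(xg^{-1}) \in \mu$, and by the $S1$ property of $\mu$ on a $\lambda$-medium set containing $p$ together with compactness this is an $M$-type-definable condition on $g$; hence $S^{\mu}(p)$ and $Stab^{\mu}(p) = S^{\mu}(p)^{2}$ are $M$-type-definable. Wideness and $\lambda$-mediumness of $Stab^{\mu}(p)$ descend from (B) and (A) through its presentation as a product of types of $\lambda$-wide elements. Connectedness follows in Hrushovski's style: \cref{boundedindex} tells us any wide type-definable subgroup of $G$ has bounded index, and a strictly smaller type-definable subgroup of $Stab^{\mu}(p)$ would have to exclude a wide coset, contradicting that every element of $Stab^{\mu}(p)$ is already represented via a forking-free insertion as in step two. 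Finally, for $Stab^{\mu}(p) \setminus S^{\mu}(p)$: each such $g$ satisfies $pg \cap p \in \mu$, witnessed by some $M$-formula $\varphi_{g} \in p$, which places $g$ in the $M$-definable set $\{h : \varphi_{g}(x) \wedge \varphi_{g}(xh^{-1}) \in \mu\}$; the $S1$ property together with translation invariance of $\mu$ ensures this set is itself non-wide, and as $g$ ranges over $Stab^{\mu}(p) \setminus S^{\mu}(p)$ we obtain the required covering by non-wide $M$-definable pieces.
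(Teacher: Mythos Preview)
This theorem is stated in the paper only as a citation to \cite[Theorem 2.15]{montenegro2018stabilizersgroupsfgenericsntp2}; the paper does not give its own proof, so there is nothing to compare your sketch against on that front.

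That said, your sketch has a genuine gap in the passage from hypothesis (B) to membership in $S^{\mu}(p)$. Condition (B) tells you that $\tp(a^{-1}b/M)$ is $\lambda$-wide, i.e.\ does not imply any formula in $\lambda$. What you need for $a^{-1}b\in S^{\mu}(p)$ is that the partial type $p(x)\wedge p(x\cdot (a^{-1}b))$ is $\mu$-wide. These are statements about different ideals and different objects, and \cref{wideimpiesnonfork} does not bridge them: that lemma converts $\mu$-wideness of a type into non-forking, not $\lambda$-wideness of one type into $\mu$-wideness of an intersection of two translates of another. In the actual Montenegro--Onshuus--Simon argument (following Hrushovski), the key step is an $S1$-based ``doubling'' lemma: if $q$ is $\mu$-wide and medium and $a\models q$ with $a\forkindep^{f}_{M} b$, then $\tp(a/Mb)$ is $\mu$-wide; one then uses this, together with (A) to maintain mediumness, to show directly that for $(a,b)\models p\times_{nf}p$ the type $\tp(a/Mb)$ is $\mu$-wide, whence $p\cap p\cdot(a^{-1}b)$ is $\mu$-wide. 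Condition (B) is used separately to guarantee that $Stab^{\mu}(p)$ itself is $\lambda$-wide (hence $\lambda$-medium via (A)). Your collapse step $g_{1}g_{2}=(g_{1}h^{-1})(hg_{2})$ has a similar problem: you assert that ``genericity of $h$ transports the $\mu$-wideness of $ph\cap p$'', but you have not established that $h\in S^{\mu}(p)$, nor that the factors $g_{1}h^{-1}$ and $hg_{2}$ land in $S^{\mu}(p)$; this again requires the doubling lemma, applied with (A) to keep the relevant types medium so that $\mu$ remains $S1$ where you use it.
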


\begin{lemma}\cite{hrushovski2011stablegrouptheoryapproximate}[Lemma 2.16]\label{newmod} Let $\mu$ be an $A$-invariant ideal. Then there is a model $M \supseteq A$ and a global $M$-invariant type $p$ finitely satisfiable in $M$ such that if $a\models p_{\vert M}$ and $b\models p_{\vert Ma}$ then $\tp(a/Mb)$ is wide.
\end{lemma}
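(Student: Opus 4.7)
My approach is to construct $M$ and $p$ via the standard route --- a wide global partial type, then a wide complete type over a sufficiently saturated model, then a global coheir --- and to verify the wideness conclusion by combining wideness of $p|_M$, finite satisfiability of $p$ in $M$, and $A$-invariance of $\mu$.

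First, wide partial types exist. The partial type $\pi(x) := \{\neg \varphi(x,c) : \varphi(x,c) \in \mu\}$ over the monster is consistent: any finite subset is $\neg \bigvee_i \varphi_i(x, c_i)$; by closure of $\mu$ under disjunction the disjunction lies in $\mu$, and by properness of $\mu$ it is not equivalent to $\top$, so its negation is consistent. Any completion of $\pi$ is a wide global type. I then choose a model $M \supseteq A$ that is $\kappa$-saturated and strongly $\kappa$-homogeneous for sufficiently large $\kappa$, extend $\pi|_M$ to a complete wide type $q \in S(M)$, and let $p$ be a global coheir of $q$ --- an $M$-invariant, finitely-satisfiable-in-$M$ extension --- obtained by the standard ultrafilter construction.

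For the key verification, take $a \models p|_M = q$ and $b \models p|_{Ma}$, and suppose for contradiction that $\tp(a/Mb)$ is not wide: there exist $\varphi$ and $\bar{m}_0 \in M$ with $\varphi(x, b, \bar{m}_0) \in \mu$ and $\models \varphi(a, b, \bar{m}_0)$. Then $\varphi(a, y, \bar{m}_0) \in p|_{Ma}$, and by finite satisfiability of $p$ in $M$ there is $m \in M$ with $\models \varphi(a, m, \bar{m}_0)$, so $\varphi(x, m, \bar{m}_0) \in q$ and hence $\varphi(x, m, \bar{m}_0) \notin \mu$ by wideness of $q$. The intended contradiction arises by additionally securing $m \equiv_{A \bar{m}_0} b$: then $A$-invariance of $\mu$ applied to an automorphism fixing $A \bar{m}_0$ and sending $b$ to $m$ transfers $\varphi(x, b, \bar{m}_0) \in \mu$ to $\varphi(x, m, \bar{m}_0) \in \mu$, contradicting the previous paragraph.

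The main obstacle is securing this joint condition on $m$. One considers the partial type $q|_{A \bar{m}_0}(y) \wedge \varphi(a, y, \bar{m}_0)$, which is consistent (realized by $b$) and finitely satisfiable in $M$ (as it sits inside $p|_{Ma}$), and aims to realize it by a single $m \in M$. This is delicate because the partial type has parameters from $Ma$ rather than from $M$, so plain saturation of $M$ does not directly apply. The resolution in Hrushovski's original argument involves refining the coheir construction (going beyond an arbitrary coheir) so that $p$ acquires a sufficiently ``inward-looking'' structure --- forcing Morley 2-sequences $(a,b)$ into configurations from which $a$ remains generic over $Mb$ --- combined with saturation of $M$ over $A \bar{m}_0$ and $A$-invariance of $\mu$ to close the contradiction.
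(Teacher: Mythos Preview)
The paper does not prove this lemma; it is cited from Hrushovski without proof, so there is nothing in the paper to compare your argument against. On its own merits, your proposal is not a proof: you explicitly concede in the final paragraph that the key step is missing.

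The gap you name is the entire content of the lemma. With an arbitrary wide $q\in S(M)$ and an arbitrary global coheir $p$, finite satisfiability in $M$ realizes each finite piece of $q|_{A\bar m_0}(y)\wedge\varphi(a,y,\bar m_0)$ inside $M$, but the external parameter $a$ prevents any appeal to saturation of $M$ from producing a single $m\in M$ with both $\models\varphi(a,m,\bar m_0)$ and $m\equiv_{A\bar m_0}b$. Your closing sentence about ``refining the coheir construction (going beyond an arbitrary coheir)'' is not a proof step but a pointer to where the real work lies; as written, the argument stops exactly where the difficulty begins. Hrushovski's construction does not start from an arbitrary coheir: one builds $M$ and $p$ together, for instance by producing a long sequence $(a_i)$ with each $\tp(a_i/Aa_{<i})$ wide, extracting an $A$-indiscernible sequence, and taking $p$ to be its limit type over a model $M$ containing the sequence. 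The indiscernibility is precisely what lets one transfer the formula $\varphi(a,y,\bar m_0)$ from $b$ back to an element of $M$ while controlling the type over $A\bar m_0$. Without some such construction the verification you wrote out cannot be completed, and you have not supplied one.
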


\subsection{The assumptions on $F$}

Let $\mathbb{F}$ be a monster model of some theory $T$ in a countable language $L$ extending the theory of fields and let $(G,\cdot)$ be a group of finite arity type-definable over a set of parameters $A\subseteq \mathbb{F}$. We write $\mathbb{F}^{alg}$ the algebraic closure of $\mathbb{F}$ in the sense of fields and similarly $B^{alg}\subseteq F^{alg}$ the algebraic closure of some $B\subseteq \mathbb{F}$. We will write $dcl^{alg}$ for the definable closure in the sense of $\mathbb{F}^{alg}$. We will write $\forkindep^{ACF}$ for forking-independence in the sense of $\mathbb{F}^{alg}\models ACF$.

\vspace{10pt}
We assume the following:

\begin{enumerate}
    \item[(H1)] $T$ is NSOP$_1$, satisfies existence and $Stab^{K}(p)=S^{K}(p)\cdot S^{K}(p)$ for every type $p\in S_{G}(B)$ and parameters $B \supseteq A$.
    \item[(H2)] The group law of $G$ is algebraic in the sense of $\mathbb{F}^{alg}$: $a^{-1}\in (A,a)^{alg}$ and $a\cdot b\in (A,a,b)^{alg}$ for every $a,b\in G$.
    \item[(H3)] We can extend the language $L$ to $L'$ and $\mathbb{F}$ in an $L'$-structure which we write $\mathbb{F}'$ such that there is an invariant $S1$ ideal of formulas $\mu_{0}$ which is  invariant by translation in $\mathbb{F}'$ and for which $G$ is wide.
\end{enumerate}

If $N'\prec \mathbb{F}'$ we will write $N$ for $N'$ seen as an $L$-structure, in this context we have that $N\prec \mathbb{F}$. We recall the following result about Kim-independence in fields:

\begin{prop}\cite[Proposition 9.28]{kaplan2020kim}\label{kimindcorpsquelc} Suppose $F$ is an arbitrary field. We denote by $A^{s}$ the separable closure a field $A$. Let $M\prec F$ and $a,b\in F$ such that $a\forkindep^{K}_{M}b$. Then the fields $A = acl(Ma)$ and $B = acl(Mb)$ satisfy the following conditions:
\begin{enumerate}
    \item $A$ and $B$ are linearly disjoint over $M$.
    \item $F$ is a separable extension of $AB$.
    \item $acl(AB)\cap A^{s}B^{s}=AB$.
\end{enumerate}
\end{prop}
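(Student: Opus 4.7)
The plan is to reduce all three conclusions to the Morley-sequence characterization of non-Kim-dividing: if $a\forkindep^{K}_{M}b$, then an $M$-finitely satisfiable Morley sequence $(b_{i})_{i<\omega}$ with $b_{0}=b$ can be taken $Ma$-indiscernible; equivalently, $\bigcup_{i<\omega}\tp(a/Mb_{i})$ is consistent, so we may assume the same $a$ realizes $\tp(a/Mb_{i})$ for every $i$. Setting $A=acl(Ma)$ and $B_{i}=acl(Mb_{i})$, we obtain an $A$-indiscernible sequence of isomorphic copies of $B$ with $B_{0}=B$, and transitivity-type arguments for Kim-independence should moreover give that $A$ remains Kim-independent from the union $\bigcup_{i}B_{i}$ over $M$.

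For (1), suppose $A$ and $B$ are not linearly disjoint over $M$ and fix a witnessing relation $\sum_{j=1}^{n}\alpha_{j}\beta_{j}=0$ with $n$ minimal, $\alpha_{j}\in A$ linearly independent over $M$, and $\beta_{j}\in B$ not all zero. Transporting along $(b_{i})$ yields $\sum_{j}\alpha_{j}\beta_{j}^{(i)}=0$ with $\beta_{j}^{(i)}\in B_{i}$, and the sequence $(\beta_{j}^{(i)})_{i}$ is $A$-indiscernible and lies in a fixed proper $F$-affine subspace of $F^{n}$. The $M$-finite satisfiability of the sequence combined with the rigidity forced by the minimality of $n$ either collapses the $\beta_{j}^{(i)}$ onto $M$ (contradicting the $M$-linear independence of $(\alpha_{j})$) or produces an unbounded family of $M$-linearly-independent vectors inside that hyperplane, contradicting its finite dimension. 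Clause (2) reduces to characteristic $p>0$: given $c\in F\setminus AB$ with $c^{p}\in AB$, transport the polynomial witness along $(b_{i})$ to obtain $c_{i}$ with $c_{i}^{p}\in AB_{i}$; uniqueness of $p$-th roots, $A$-indiscernibility, and (1) then place $c$ inside $AB$, a contradiction.

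Clause (3) is the technical heart and the main obstacle. Once (1) is in hand, the linear disjointness of $A$ and $B$ over $M$ extends, by standard Galois-theoretic arguments, to an injection $\Gal(A^{s}B^{s}/AB)\hookrightarrow \Gal(A^{s}/A)\times \Gal(B^{s}/B)$. A hypothetical $c\in acl(AB)\cap A^{s}B^{s}\setminus AB$ produces a nontrivial element of $\Gal(A^{s}B^{s}/AB)$ which must be consistent across the $A$-indiscernible sequence $(B_{i})_{i}$; $M$-finite satisfiability then forces the corresponding Galois datum to descend to $M$, whence $c\in AB$, contradicting the assumption. The subtle point, and the reason this step is the hardest, is synchronizing the model-theoretic $acl$ inside $F$ (as opposed to the ambient field-theoretic algebraic closure in $F^{alg}$) with the absolute Galois action along the Morley sequence, so that the descent via finite satisfiability actually produces an element of $AB$ rather than merely an element of $acl(AB)$.
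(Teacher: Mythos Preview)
The paper does not prove this proposition: it is simply quoted from \cite[Proposition 9.28]{kaplan2020kim} and used as a black box (specifically, only clause (1) is invoked, in the proof of \cref{h4}). So there is no ``paper's own proof'' to compare against.

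As to your sketch itself: the overall strategy---realize $a\forkindep^{K}_{M}b$ via an $M$-coheir Morley sequence $(b_i)$ that is $Ma$-indiscernible, then propagate a putative witness of failure along the sequence and use finite satisfiability to descend to $M$---is exactly the approach in the cited reference, and clause (1) is essentially right. The clean way to finish (1) is: with $n$ minimal and, say, $\beta_1\neq 0$, divide through so one coefficient equals $1$; subtracting the relation for index $i$ from that for index $0$ gives a shorter relation, hence by minimality all the normalized coefficients agree across $i$, hence lie in $M$ by finite satisfiability, contradicting $M$-linear independence of the $\alpha_j$. Your ``unbounded family in a hyperplane of finite dimension'' alternative is not a correct dichotomy and should be dropped.

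Clause (2) has a genuine slip: the failure of separability of $F/AB$ is \emph{not} witnessed by some $c\in F\setminus AB$ with $c^{p}\in AB$ (that would say $AB$ is not relatively perfect in $F$, a different condition). What you need is a $p$-dependence in $F$ of elements that are $p$-independent in $AB$, i.e.\ a nontrivial relation $\sum \gamma_k^{p}\,m_k=0$ in $F$ with the $m_k\in AB$ forming part of a $p$-basis of $AB$; then run the same minimality-plus-indiscernibility argument on the $B$-coordinates of the $m_k$. Clause (3) is the right shape, but your description of the obstruction is accurate: you must check that the element witnessing failure, which a priori lives in $acl(AB)$ computed in $F$, really interacts with $\Gal(A^sB^s/AB)$ in the way you claim; in the source this is handled by working with explicit generators of the relevant separable extension and again using finite satisfiability to force the defining data into $M$. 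Your sketch identifies the difficulty but does not yet discharge it.
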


With this we can show the following in NSOP$_1$ fields:

\begin{lemma}\label{h4} Let $F$ be an NSOP$_1$ field with existence. Let $E\subseteq A,B \subseteq F$ be algebraically closed sets such that $A\forkindep^{K}_{E}B$. Then $A\forkindep^{ACF}_{E}B$.
\end{lemma}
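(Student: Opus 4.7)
The strategy is to reduce to the case of a model base, where \cref{kimindcorpsquelc} applies directly. First, using existence and the extension property for Kim-independence, I would find a model $M$ with $E \subseteq M \prec \mathbb{F}$ such that $M \forkindep^{K}_{E} AB$; by combining existence for $\forkindep^{K}$ with the extension property for $\forkindep^{ACF}$, we can additionally arrange $M \forkindep^{ACF}_{E} AB$ simultaneously.

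From $A \forkindep^{K}_{E} B$ together with $M \forkindep^{K}_{E} AB$, the base-change property for Kim-independence over algebraically closed bases in NSOP$_{1}$ theories with existence (which follows from the Kim--Pillay-type characterization invoked in \cite[Theorem 5.1]{chernikov2023transitivitylownessranksnsop1}) yields $A \forkindep^{K}_{M} B$. Since $M$ is now a model, \cref{kimindcorpsquelc} applies and gives that $acl(MA)$ and $acl(MB)$, in the field-theoretic sense inside $\mathbb{F}^{alg}$, are linearly disjoint over $M$. Linear disjointness of field extensions implies algebraic independence, so $A \forkindep^{ACF}_{M} B$.

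To transfer from base $M$ back to base $E$, I would use transcendence-degree additivity. From $M \forkindep^{ACF}_{E} AB$ we have $\trdeg(X/M) = \trdeg(X/E)$ for each of $X = A, B, AB$, while $A \forkindep^{ACF}_{M} B$ gives $\trdeg(AB/M) = \trdeg(A/M) + \trdeg(B/M)$. Combining these equalities yields $\trdeg(AB/E) = \trdeg(A/E) + \trdeg(B/E)$, that is, $A \forkindep^{ACF}_{E} B$.

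The main obstacle is arranging a single model $M$ satisfying both $M \forkindep^{K}_{E} AB$ and $M \forkindep^{ACF}_{E} AB$ at once, without circularly invoking the lemma itself; this calls for a careful combination of the existence axiom for $T$ and ACF-extension, checking that sufficiently generic elementary submodels of $\mathbb{F}$ satisfy both independence conditions simultaneously. A viable alternative avoiding this subtlety is a direct Kim-Morley sequence argument via \cref{witnessing2}: a hypothetical algebraic dependence $P(a,\bar{b})=0$ with $a \in A$ a single element transcendental over $E$ and $\bar{b} \in B$ produces, after extracting a Kim-Morley sequence in $\tp(a/E)$ that is $E\bar{b}$-indiscernible, a sequence whose terms all lie in the finite root set of $P(x,\bar{b})$; pigeonhole and indiscernibility then force the sequence to be constant, which combined with Kim-independence over the algebraically closed base $E$ puts $a$ into $acl(E)=E$, contradicting the choice of $a$.
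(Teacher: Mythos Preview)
Both of your proposed routes have genuine gaps. In Approach 1, the step you treat as routine---deducing $A \forkindep^{K}_{M} B$ from $A \forkindep^{K}_{E} B$ together with $M \forkindep^{K}_{E} AB$---is precisely base monotonicity for Kim-independence, which \emph{fails} in NSOP$_1$ theories that are not simple. The Kim--Pillay-style characterization in \cite{chernikov2023transitivitylownessranksnsop1} does not supply such a base-change principle; this is not a matter of citing the right lemma but a structural obstruction. So even granting a model $M$ with both independences you ask for, you cannot pass to the model base to invoke \cref{kimindcorpsquelc}.

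Your Approach 2 is correct as far as it goes, but it only shows that each \emph{single} element $a\in A$ transcendental over $E$ stays transcendental over $EB$. That is strictly weaker than $A\forkindep^{ACF}_{E}B$: for instance, take a generic point of $x_1y_1+x_2y_2+x_3y_3=0$ and set $A=E(\bar{x})$, $B=E(\bar{y})$; then $A\cap B^{alg}=E$ yet $A\not\forkindep^{ACF}_{E}B$. Extending your pigeonhole argument to tuples fails because the zero set of $P(\bar{x},\bar{b})$ is no longer finite. The paper's proof avoids both pitfalls by working entirely on the ACF side: it extracts a long $E$-Morley sequence $(A_i)_{i<\omega+\omega+1}$ that is $B$-indiscernible, uses finite satisfiability and total indiscernibility in ACF to locate the canonical base $\cb(A_{\omega+\omega}/IJB)$ inside both $I^{alg}$ and $J^{alg}$, and then runs a minimal-polynomial argument (exploiting $J\forkindep^{f}_{E}I$ and indiscernibility over $J$) to force this canonical base into $E^{alg}$.
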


\begin{proof} Let $(A_{i})_{i<\omega +\omega +1}$ be an $E$-Morley sequence with $A_{0}=A$ which is $B$-indiscernible and satisfies 
$$(A_{i})_{i<\omega +\omega +1}\forkindep^{K}_{E}B.$$

Let $I=(A_{i})_{i<\omega}$ and $J=(A_{i})_{\omega \leq i < \omega+\omega}$. Since $\tp(A_{\omega +\omega}/EIJB)$ is finitely satisfiable in $J$ we have that $A_{\omega+\omega}\forkindep^{ACF}_{J}EIJB$. By stability and quantifier elimination in $ACF$ the sequence $(A_{i})_{i<\omega +\omega +1}$ is totally indiscernible in the theory $ACF$, so $A_{\omega+\omega}\forkindep^{ACF}_{I}EIJB$. Let $\overline{a}:=\cb(A_{\omega +\omega}/EIJB)$, then $\overline{a}\in I^{alg}\cap J^{alg}$ by elimination of imaginaries in ACF.

\vspace{10pt}
We show that $\overline{a}\in E^{alg}$, which implies that $A_{\omega+\omega}\forkindep^{ACF}_{E}IJB$, so $A\forkindep^{ACF}_{E}B$ by indiscernibility. We know that $\overline{a}\in I^{alg}\cap J^{alg}$.

\vspace{10pt}
Let $P(X)$ be the minimal polynomial of an element $a\in \overline{a}$ over the field generated by $I$. Then $P$ has coefficients in the field generated by $A_{<k}$ for some $k<\omega$. Let $Q$ be the minimal polynomial of $a$ over the field generated by $A_{[k,2k[}$. Since $A_{<k}$ and $A_{[k,2k[}$ begin a $JA_{\omega+\omega}$-indiscernible sequence in $F^{alg}$ we know that $A_{<k}\equiv_{J^{alg}A_{\omega+\omega}}A_{[k,2k[}$.

\vspace{10pt}
In particular $P$ and $Q$ have the same degree. If $P\centernot= Q$ then $a$ is a root of $P-Q$ which has degree strictly smaller than $P$ and coefficients in the field generated by $I$, which contradicts the fact that $P$ is the minimal polynomial of $a$ over $I$. So $P=Q$ and $P$ has coefficients in the intersection of the fields generated by $A_{<k}$ and $A_{[k,2k[}$, which is $E$ since $(A_{i})_{i<\omega}$ is a total $E$-Morley sequence. From this we have that $a\in E^{alg}$ for any $a\in \overline{a}$, and so that $\overline{a}\in E^{alg}$.
\end{proof}

\begin{lemma}\label{nonkimforkwide2} If a type $p'=\tp_{L'}(a/N'B)$ is wide for an elementary substructure $N'\prec \mathbb{F}'$, then $p:=\tp_{L}(a/NB)$ its restriction to $L$ does not Kim-fork over $N$ (here $N$ is the restriction of $N'$ to the language $L$).
\end{lemma}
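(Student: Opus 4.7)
The strategy is to use an $L'$-coheir of $\tp_{L'}(B/N')$ over $N'$ to produce one sequence that is simultaneously $L'$-indiscernible over $N'$ (so that wideness of $p'$ activates) and a Kim-Morley sequence over $N$ in $L$ (so that Kim's lemma for Kim-dividing in NSOP$_1$ activates). Concretely I will exhibit a Kim-Morley sequence $(B_i)_{i<\omega}$ in $\tp_L(B/N)$ over $N$ with $B_0 = B$ together with some $a^*$ realizing $\bigcup_i \tp_L(a/NB_i)$; this already suffices to conclude that no $L$-formula in $p = \tp_L(a/NB)$ Kim-divides over $N$, hence that $p$ does not Kim-fork over $N$.

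Since $N' \prec \mathbb{F}'$, the plan is to fix a global $L'$-extension $r' \in S^{L'}(\mathbb{F}')$ of $\tp_{L'}(B/N')$ that is $L'$-finitely satisfiable in $N'$, and generate from it a coheir Morley sequence $(B_i)_{i<\omega}$ over $N'$ with $B_0 = B$. This sequence is $L'$-indiscernible over $N'$ and each $\tp_{L'}(B_i/N'B_{<i})$ is finitely satisfiable in $N'$. Restriction to $L$ preserves both properties ($L'$-indiscernibility yields $L$-indiscernibility, and an $L'$-type finitely satisfiable in $N'$ restricts to an $L$-type finitely satisfiable in $N$, the same witnessing tuples in $N' = N$ serving both), so $(B_i)$ is a coheir Morley sequence over the model $N$ in $L$, and in particular a Kim-Morley sequence over $N$ in $L$. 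I then apply \cref{wideimpiesnonfork} with the ideal $\mu_0$, which is (globally, hence $N'$-)invariant and $S1$ on the wide set $G$: the wide $L'$-type $p'$ does not $L'$-divide over $N'$, so applied to the $L'$-$N'$-indiscernible sequence $(B_i)$ it produces $a^*$ with $a^* \models \bigcup_i p'(x, B_i)$ in $L'$. Reducing to $L$ gives $\tp_L(a^*/NB_i) = \tp_L(a/NB_i)$ for every $i$, so $\bigcup_i \tp_L(a/NB_i)$ is consistent. For every $L$-formula $\varphi(x, B) \in p$ the set $\{\varphi(x, B_i)\}_{i<\omega}$ is then realized by $a^*$; by Kim's lemma (in NSOP$_1$, Kim-dividing of a formula over a model is witnessed by any coheir Morley sequence in the type of the parameters), $\varphi(x, B)$ does not Kim-divide over $N$.

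The main obstacle I anticipate is arranging for a single sequence to be both $L'$-indiscernible over $N'$ (needed to activate wideness) and Kim-Morley over $N$ in $L$ (needed to activate Kim's lemma in the reduct). This is resolved by the observation that $L'$-finite satisfiability in $N'$ restricts to $L$-finite satisfiability in $N$, so the restriction of an $L'$-coheir Morley sequence over $N'$ is automatically an $L$-coheir (hence Kim-Morley) sequence over $N$. This is the one place in the argument where the hypothesis that $N'$ is an elementary substructure, rather than an arbitrary parameter set, is essential.
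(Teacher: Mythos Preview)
Your argument is correct and follows essentially the same line as the paper's: take an $L'$-coheir Morley sequence over $N'$, use wideness (via \cref{wideimpiesnonfork}) to find a realization along the whole sequence, observe that the sequence restricts to an $L$-coheir Morley sequence over $N$, and conclude by Kim's lemma. Your write-up is more explicit than the paper's about why the restriction of an $L'$-coheir sequence is an $L$-coheir sequence, but the underlying idea is the same.
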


\begin{proof}
Let $(B_{i})_{i<\omega}$ a coheir Morley sequence with $B_{0}=B$ over $N'$. Then since $\tp(a'/N'B_{0})$ does not fork in the sense of $L'$ there is an $a''\in F$ such that $a''NB_{i}\equiv_{N}a'NB_{0}$ in the sense of $L'$ for all $i<\omega$, so in particular $a''B_{i}\equiv_{N}aB_{0}$ for all $i<\omega$ in the sense of $L$. Since $(B_{i})_{i<\omega}$ is coheir Morley over $N$ in the sense of $L$, by Kim's Lemma for Kim-forking, we get that $a'\forkindep^{K}_{N}B$.\end{proof}


\begin{lemma}\label{conditionf2} There exist a model $M'$ and a global $M'$-invariant type $p'\in S_{G}(\mathbb{F})$ in the language $L'$ finitely satisfiable in $M'$ such that $A \subseteq M' \prec  \mathbb{F}'$, if $b\models p'_{\vert M'}$ and $a\models p'_{\vert M'b}$ then $\tp(b/M'a)$ is wide and also that $M'$ has infinite transcendence degree over $A$. We write $M$ for $M'$ seen as an $L$-structure (so $M\prec \mathbb{F}$) and $p\in S_{G}( \mathbb{F})$ for the restriction of $p'$ to $L$.\end{lemma}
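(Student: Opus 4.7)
The plan is a direct application of \cref{newmod}, after two adjustments: enlarging the base so that the resulting model has infinite transcendence degree over $A$, and ensuring that the constructed type concentrates on $G$.

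First, I would pick a sequence $(t_i)_{i<\omega}$ in $\mathbb{F}$ that is algebraically independent over $A$ in the field-theoretic sense, and set $A^\ast := A \cup \{t_i : i < \omega\}$. Since $\mu_0$ is invariant by (H3), it is in particular $A^\ast$-invariant; and since $G$ is $\mu_0$-wide, again by (H3), the partial $L'$-type $\pi_G(x)$ defining $G$ is itself wide. This serves as the input wide partial type for the construction.

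Second, I would apply \cref{newmod} to the ideal $\mu_0$ with base $A^\ast$ in the theory $T'$, starting the construction from the wide partial type $\pi_G$. The proof of \cref{newmod} extends a wide partial type to a maximal (hence complete) wide global type and then produces a suitable small model witnessing finite satisfiability and $M'$-invariance; feeding $\pi_G$ in as the initial wide type forces the resulting global type to concentrate on $G$. This produces a model $M'$ with $A \subseteq A^\ast \subseteq M' \prec \mathbb{F}'$ and a global $M'$-invariant type $p' \in S_G(\mathbb{F})$, finitely satisfiable in $M'$, such that whenever $a \models p'_{\vert M'}$ and $b \models p'_{\vert M' a}$, the $L'$-type $\tp(a/M'b)$ is $\mu_0$-wide.

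Finally, since $M' \supseteq A^\ast$ by construction and $A^\ast$ already has infinite transcendence degree over $A$, so does $M'$. Setting $M \prec \mathbb{F}$ to be $M'$ regarded as an $L$-structure and $p$ the restriction of $p'$ to $L$ then fulfils the labelling announced in the statement. The only potential obstacle is justifying that the type produced by \cref{newmod} can be taken to concentrate on $G$; but this is a cosmetic modification of its proof, since the construction begins with an arbitrary wide partial type, and we are free to take this starting type to be $\pi_G$.
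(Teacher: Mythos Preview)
Your proof is correct and follows the same two-step plan as the paper: enlarge the base to force infinite transcendence degree, then invoke \cref{newmod}. The only difference is in the choice of enlarging elements. You adjoin arbitrary field-theoretic transcendentals $(t_i)_{i<\omega}$; the paper instead adjoins a sequence $(g_i)_{i<\omega}\subseteq G$ of elements of \emph{maximal} transcendence degree over $A$ with $g_i\forkindep^{ACF}_{A}g_{<i}$, and then applies \cref{newmod} to this larger set. For the lemma as stated the two choices are interchangeable, but the paper's choice is aimed at the later proof of \cref{groupconfigdsf}, which repeatedly needs an element $a''\in G(M')$ of maximal transcendence degree that is $\forkindep^{ACF}$-independent over a finitely generated $B\supseteq A$ from a fixed external tuple. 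With the $g_i$ already inside $M'$ one simply takes $a''=g_i$ for large $i$; with bare transcendentals one would still owe an argument that $G(M')$ realises the relevant ACF-generic type over $B$ and over $Bd$. Your explicit remark on feeding $\pi_G$ into the construction so that $p'$ concentrates on $G$ is a point the paper leaves implicit.
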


\begin{proof}
Let $A\subseteq B$ be such that $B$ contains a sequence $(g_{i})_{i<\omega}\in G$ of elements of maximal transcendence degree over $A$ such that $g_{i}\forkindep^{ACF}_{A}g_{<i}$ for all $i<\omega$. We apply \cref{newmod} to $B$ and find a model $A \subseteq B \subseteq M' \prec  \mathbb{F}'$ and a global $M'$-invariant type $p'$ finitely satisfiable in $M'$ such that if $a\models p'_{\vert M'}$ and $b\models p'_{\vert M'a}$ then $\tp(a/M'b)$ is wide. Then $M'$ has infinite transcendence degree over $A$.\end{proof}

In this section we are going to work on three different levels:
\begin{enumerate}
    \item The stable theory of $\mathbb{F}^{alg}$, i.e. the quantifier free types, in which we will build a group configuration in the following subsection.
    \item The original $L$-theory of the field $\mathbb{F}$ which is NSOP$_1$ and has a notion of stabilizer.
    \item The $L'$-theory of $\mathbb{F}'$ which has an $S1$ ideal, so we can apply the results of \cite{montenegro2018stabilizersgroupsfgenericsntp2}.
\end{enumerate}

\subsection{Definable measures and (H3)}

\begin{definition} A measure $\mu$ is said to be definable over a set of parameters $B$ if for every formula $\varphi(x,a)$ with parameters $a$, whenever $\mu(\varphi(x,a))\in ]\alpha,\beta[$ there is a formula $\theta(y)\in \tp(a/B)$ such that $\mu(\varphi(x,a))\in ]\alpha,\beta[$ for all $\models \theta (a')$.
\end{definition}

A measure definable over $B$ is $B$-invariant (i.e. invariant under automorphisms fixing $B$). If $\varphi(x,y)$ is a formula and $]\alpha,\beta[$ is an open interval then what the definition states is that the set $\lbrace b $ : $\mu(\varphi(x,b))\in ]\alpha,\beta[\rbrace$ is $\vee$-definable over $B$.

\vspace{10pt}
By expanding the language it is always possible to extend a measure $\mu$ on the definable subsets of some definable set $X$ into a definable measure on the definable subsets of $X$ in this expansion.

\vspace{10pt}
For this all we have to do is to add to the language the relation symbols $R_{\varphi,q}$ for $\varphi(x,y)$ a formula such that $\varphi(x,b)\subseteq X$ for all $b$ and $q\in \mathbb{Q}$ which are interpreted by 
\begin{center}
    $\lbrace b $ : $\mu(\varphi(x,b))<q\rbrace$.
\end{center}

Then extend the measure to the new definable sets, which is possible by applying \cite[Lemma 7.3]{simon2015guide}, and iterate the process. Notice that if the measure $\mu$ is defined on the definable subsets of a definable group $G$ and is invariant by translation there is no reason for the expanded measure that we get in this process to also be invariant by translation.

\vspace{10pt}
In the context of the assumption $(H3)$ this is a problem: We want to get an ideal which is invariant by translation and also by automorphisms, and if we have a measure $\mu$ on the definable subsets of $G$ in $\mathbb{F}$ we would like this ideal to correspond to the ideal of formulas of measure $0$ in the expanded language.

\vspace{10pt}
With the following two lemmas, we show that in a very specific case, it is indeed possible to expand the language and extend a measure which is invariant by translation into a definable measure that remains invariant by translation, so the ideal of formulas of measure $0$ in the expansion will be invariant by translation and also $S1$, and with it we can satisfy $(H3)$.

\begin{lemma}\label{mesurefinie} Consider a definable measure $\mu$ on definable subsets of some $B$-definable set $X$ for some set of parameters $B$ such that for every formula $\varphi(x,y)$ the measure $\mu(\varphi(x,b))$ can only take a finite number of values for $b$ ranging over a model. Then for every formula $\varphi(x,y)$ and real number $\alpha$ there is a formula $\theta^{\varphi}_{\alpha}(y)$ such that $\models \theta_{\varphi,\alpha}(a)$ if and only if $\mu(\varphi(x,a)) = \alpha $ for all $y$-tuples $a$.
\end{lemma}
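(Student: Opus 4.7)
The plan is to fix the formula $\varphi(x,y)$, enumerate the finitely many values $\alpha_{1}<\dots<\alpha_{n}$ taken by $\mu(\varphi(x,b))$ as $b$ ranges over the monster model, and use the definability of $\mu$ together with a compactness argument to extract a single $A$-formula for each value.

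If the target $\alpha$ does not belong to $\{\alpha_{1},\dots,\alpha_{n}\}$ then $\theta^{\varphi}_{\alpha}(y)$ can be taken to be any contradiction, so assume $\alpha=\alpha_{i}$. Choose pairwise disjoint open intervals $I_{1},\dots,I_{n}$ with $\alpha_{j}\in I_{j}$. Since $\mu(\varphi(x,b))$ only takes the values $\alpha_{1},\dots,\alpha_{n}$, we have
\[
\{b:\mu(\varphi(x,b))=\alpha_{i}\}=\{b:\mu(\varphi(x,b))\in I_{i}\},
\]
and its complement equals $\bigcup_{j\neq i}\{b:\mu(\varphi(x,b))\in I_{j}\}$. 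By the hypothesis that $\mu$ is $A$-definable, each set of the form $\{b:\mu(\varphi(x,b))\in I_{j}\}$ is a union of $A$-definable sets (that is, $\vee$-definable over $A$). Consequently both $S:=\{b:\mu(\varphi(x,b))=\alpha_{i}\}$ and its complement $S^{c}$ are $\vee$-definable over $A$.

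Now I would invoke compactness in the monster model. Write $S=\bigcup_{k\in K}\theta_{k}(y)$ and $S^{c}=\bigcup_{\ell\in L}\psi_{\ell}(y)$ with each $\theta_{k},\psi_{\ell}$ an $L'(A)$-formula, and each satisfying $\theta_{k}\vdash S$, $\psi_{\ell}\vdash S^{c}$. The union of all these formulas covers the ambient sort, so by saturation some finite subcollection $\theta_{k_{1}},\dots,\theta_{k_{r}},\psi_{\ell_{1}},\dots,\psi_{\ell_{s}}$ already covers. Since each $\theta_{k_{t}}$ lies in $S$ and each $\psi_{\ell_{t}}$ lies in $S^{c}$, we conclude $S$ is defined by the formula $\theta^{\varphi}_{\alpha_{i}}(y):=\theta_{k_{1}}(y)\vee\dots\vee\theta_{k_{r}}(y)$, as desired.

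The only subtlety is the passage from the definability of the measure (which a priori only furnishes $\vee$-definability of preimages of open intervals) to genuine definability of a single level set, and this is exactly what the finiteness hypothesis on the value set affords: it forces the complement of a level set to again be a preimage of an open set (the union of the other small intervals), so both the level set and its complement are $\vee$-definable over $A$, and the standard compactness trick closes the argument. No further model-theoretic input on $\mathbb{F}$ or $\mu_{0}$ is needed.
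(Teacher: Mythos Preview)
Your argument is correct and essentially identical to the paper's: both observe that, by finiteness of the value set, each level set is simultaneously the preimage of an open interval and the complement of such a preimage, hence clopen in $S_{y}(A)$ and therefore definable. The paper phrases this topologically (``both open and closed in $S_{y}(A)$, i.e.\ it is a definable set''), while you spell out the compactness step explicitly, but the content is the same.
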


\begin{proof} Let $\varphi(x,y)$ be a formula. The set $\lbrace b $ : $\mu(\varphi(x,b))\in ]\alpha,\beta[\rbrace$ is $\vee$-definable over $A$ for any open interval. In the case when the measure can only take a finite number of values this set coincides with the pre-image of a closed interval, so it is both open and closed in $S_{y}(A)$, i.e. it is a definable set.
\end{proof}

\begin{lemma}\label{mesureprod} Let $L$ be a countable language. Let $(M_{e})_{e<\omega}$ be a family of $L$-structures and $G_{e}\subseteq M_{e}$ be a uniformly definable family of groups. Assume that $\mu_{e}$ is a measure on definable subsets of $G_{e}$ which is definable over a set of parameters $A_{e}\subseteq M_{e}$, can only take a finite number of values for a given formula $\varphi(x,y)$ and is invariant by translation. 
Assume also that $\mu_{e}(X_{e})=1$ for all $e<\omega$. Let $\mathcal{U}$ be an ultrafilter on $\omega$. Let $T$ be the $L$-theory of the ultraproduct $M:=\prod_{i}M_{e}/\mathcal{U}$. 

\vspace{10pt}
Then there is an expansion of $L$ into a countable language $L'$ and an $L'$-structure on $M$ such that there exist a measure $\mu'$ on $G:= \prod_{e}G_{e}/\mathcal{U} \subseteq M$ which extends the ultraproduct measure $\prod_{e}\mu_{e}/\mathcal{U}$, is definable on $\prod_{e}A_{e}/ \mathcal{U}$ in the sense of $L'$ and is invariant by translation.
\end{lemma}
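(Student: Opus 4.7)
The plan is to enlarge $L$ by a single countable layer of predicates coding the super-level sets of the $\mu_{e}$ on $L$-formulas, to define $\mu$ on the ultraproduct as the ultrafilter limit of the $\mu_{e}$, and to derive definability by reducing $L'$-formulas to $L$-formulas via their Boolean decomposition on the new predicates.

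Let $L' := L \cup \{Q_{\varphi,q}(y) : \varphi(x,y)\text{ an } L\text{-formula},\ q\in \mathbb{Q}\cap [0,1]\}$, a countable expansion. Interpret $Q_{\varphi,q}(y)$ in each $M_{e}$ as the set $\{b : \mu_{e}(\varphi(x,b))\geq q\}$; by the finite-values hypothesis and \cref{mesurefinie}, this is $L$-definable over $A_{e}$. Any $L'$-formula $\psi(x,y)$ admits a disjunctive normal form $\psi(x,y)=\bigvee_{i}(\chi_{i}(x,y)\wedge \rho_{i}(y))$ in which each $\chi_{i}$ is an $L$-formula and each $\rho_{i}$ is a Boolean combination of the new predicates, depending only on $y$. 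For fixed $b=[b_{e}]_{\mathcal{U}}\in M$, the Boolean values of the $\rho_{i}(b_{e})$ in $M_{e}$ reduce $\psi(x,b_{e})$ to an $L$-formula in $x$, so $\mu_{e}(\psi(x,b_{e}))$ is defined. Set $\mu(\psi(x,b)) := \lim_{\mathcal{U}} \mu_{e}(\psi(x,b_{e}))$, which exists by compactness of $[0,1]$. Finite additivity and monotonicity pass to ultrafilter limits, so $\mu$ is a measure, and by construction it restricts to $\prod_{e}\mu_{e}/\mathcal{U}$ on $L$-definable subsets of $X$.

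For the definability over $A := \prod_{e} A_{e}/\mathcal{U}$, partition the $y$-parameter space into the at most $2^{k}$ cells where the Boolean values of the $\rho_{i}(y)$ are prescribed by some $v \in \{T,F\}^{k}$; each such cell is $L'$-definable over $\emptyset$, and on it $\psi(x,b)$ coincides with the $L$-formula $\chi_{v}(x,b) := \bigvee_{i : v_{i}=T} \chi_{i}(x,b)$. For any $L$-formula $\chi_{v}$, Łoś's theorem combined with the standard ultrafilter-limit identity $\lim_{\mathcal{U}} x_{e} \geq q \iff \forall q' < q,\ \{e : x_{e} \geq q'\} \in \mathcal{U}$ yields
\[
\mu(\chi_{v}(x,b)) > \alpha \iff \bigvee_{q \in \mathbb{Q} \cap (\alpha,1]} Q_{\chi_{v},q}(b), \qquad \mu(\chi_{v}(x,b)) < \beta \iff \bigvee_{q \in \mathbb{Q} \cap [0,\beta)} \neg Q_{\chi_{v},q}(b),
\]
so $\{b : \mu(\chi_{v}(x,b)) \in (\alpha,\beta)\}$ is $\vee$-definable by parameter-free $L'$-formulas. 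Intersecting with the cell and unioning over the finitely many $v$, $\{b : \mu(\psi(x,b)) \in (\alpha,\beta)\}$ is $\vee$-definable in $L'$ over $\emptyset$, hence a fortiori over $A$.

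The hard part is the last-step equivalence linking the atomic predicate $Q_{\chi_{v},q}(b)$ in the ultraproduct to $\mu(\chi_{v}(x,b)) \geq q$ only up to a countable disjunction over rationals: it requires both Łoś's theorem for the new predicate and a careful handling of the infimum behavior of ultrafilter limits, which forces the definability to be $\vee$-definability rather than genuine formula-definability. The whole construction rests on the finite-values hypothesis, which via \cref{mesurefinie} is what gives the $L$-definability of $Q_{\varphi,q}$ over $A_{e}$ in each $M_{e}$ in the first place.
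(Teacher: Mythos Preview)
Your argument has a genuine gap in the DNF claim. You assert that every $L'$-formula $\psi(x,y)$ decomposes as $\bigvee_i(\chi_i(x,y)\wedge\rho_i(y))$ with each $\chi_i$ an $L$-formula and each $\rho_i$ a Boolean combination of the new predicates in $y$ alone. This is false: once the relation symbols $Q_{\varphi,q}$ are in the language, nothing prevents an $L'$-formula from applying them to the $x$-variables, to terms mixing $x$ and $y$, or under a quantifier. The atomic formula $Q_{\varphi,q}(x)$ (for $\varphi$ of the appropriate arity) is already a counterexample, as is $\exists z\,(Q_{\varphi,q}(z)\wedge\theta(x,y,z))$ for $\theta\in L$. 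In each coordinate $M_e$ the set $Q_{\varphi,q}^{M_e}$ is indeed $L$-definable over $A_e$, but by a formula $\theta_e$ that depends on $e$; in the ultraproduct this set is not $L$-definable, so your reduction of $\psi(x,b)$ to an $L$-formula in $x$ does not go through, and hence your definability argument for $\mu$ on such $\psi$ collapses.

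This is exactly why the paper's construction is inductive rather than one-shot. At stage $i+1$ one adds, for every $L_i$-formula $\varphi(x,y)$, predicates coding approximate values of $\mu(\varphi(x,\cdot))$; this makes the measure of $L_i$-formulas definable in $L_{i+1}$, but it simultaneously creates new $L_{i+1}$-formulas (those using the freshly added predicates on $x$) whose measure requires the next layer. Taking $L'=\bigcup_i L_i$ closes the loop: every $L'$-formula lies in some $L_i$, and the predicates witnessing its definability are already present in $L_{i+1}\subseteq L'$. Your single-layer expansion correctly handles the measure of $L$-formulas, but not the measure of the new $L'$-formulas it itself introduces; iterating your construction $\omega$ times would repair it and bring it in line with the paper's proof.
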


\begin{proof}
We expand the language $L$ into a language $L'$ which formulas are interpreted in $M$ as ultraproduct of $L$-formulas in the coordinates $M_{e}$.

\vspace{10pt}
This is done through an inductive construction. Let $L_{0}=L$. Assume that $L_{i}$ has been defined. For every $L_{i}$-formula $\varphi(x,y) = \prod_{e}\varphi^{e}(x,y)/\mathcal{U}$ which implies $x\in X$, integers $n$ and $1\leq j \leq n$ we add a new relation symbol $\theta_{\varphi,j,n}(y)$ that is interpreted in $M$ as the ultraproduct of the formulas $\theta^{e}_{\varphi^{e},j,n}(y)$, where $\theta^{e}_{\varphi^{e},j,n}(y)$ expresses that $\frac{j-1}{n} < \mu_{e}(\varphi^{e}(x,y))< \frac{j+1}{n}$ in the coordinates $M_{e}$ (such a formula exists by \cref{mesurefinie}).

\vspace{10pt}
These new formulas are measurable in a canonical way, if $\theta(x) = \prod_{e<\omega}\theta^{e}(x)/\mathcal{U}$ is an $L_{i+1}$ formula that implies that $x\in G$, then $\theta^{e}(x)$ is an $L$-formula which implies that $x\in G_{e}$ for almost all $e$ and we can take the measure $\mu'(\theta(x))$ to be the standard part of $\prod_{e<\omega}\mu_{e}(\theta^{e}(x))/\mathcal{U}$. Since all of the measures $\mu_{e}$ are invariant by translation we have that the standard part of $\prod_{e<\omega}\mu_{e}(\theta^{e}(x))/\mathcal{U}$ is equal to the standard part of $\prod_{e<\omega}\mu_{e}(\theta^{e}(g_{e}\cdot x))/\mathcal{U}$ for every $g = [g_{e}]_{e<\omega} \in G$, so $\mu'(\theta^{e}(x))=\mu'(\theta^{e}(g\cdot x))$ and the measure we get is invariant by translation.

\vspace{10pt}
By construction if $\varphi(x,y)$ is a formula in $L_{i}$, there is $\theta_{\varphi,i,n}(y) \in L_{i+1}$ such that $M \models \theta_{\varphi,i,n}(a)$ if and only if $\mu(\varphi(x,a)) \in ] \frac{j-1}{n}, \frac{j+1}{n}[$ for all $y$-tuple $a$ in $M$. So the measure $\mu'$ is definable in $L'$. Since the language $L$ is countable $L_{i}$ is also countable for every $i<\omega$, so $L'= \bigcup\limits_{i<\omega} L_{i}$ also is.\end{proof}

In the context of the previous lemma we have a definable group which is definably amenable by some measure $\mu$ in the language $L$, and an expansion $L'$ of the language and $\mu'$ of the measure in which the group is definably amenable with a definable measure in the sense of $L'$. As we saw before it is not true in general that we can get a definable measure which is invariant by translation from a measure on definable subsets which is invariant by translation by extending the language, and it is an interesting question whether we can write some sufficient conditions for this to happen.

\vspace{10pt}
In technical terms the proof of \cref{mesureprod} consists of adding some ultraproducts of the definitions of the measures to the language, take the generated algebra, extend the measure to all of these sets (since the ultraproduct measure extends to the algebra of internal sets of the ultraproduct) and iterate this process.

\subsection{The group configuration}

In this subsection we construct a group configuration over $M'$ in the sense of $\mathbb{F}^{alg}$ with points in $\mathbb{F}$. This consists of a set of tuples $(a,b,c,x,y,z)$ that we can arrange as in \cref{figuregroupconf} so that:

\begin{enumerate}
    \item Two points in the same line are $\forkindep^{ACF}$-independent over $M'$.
    \item Two points in the same line are inter-algebraic over the third one and $M'$.
    \item Any other pair or triplets are $\forkindep^{ACF}$-independent.
\end{enumerate}

\begin{figure}[hbtp]
    \centering
    
\begin{tikzcd}
  &                                               & a \arrow[ldd, no head] \arrow[rdd, no head] &                                               &   \\
  &                                               &                                             &                                               &   \\
  & b \arrow[ldd, no head] \arrow[rrrdd, no head] &                                             & x \arrow[rdd, no head] \arrow[llldd, no head] &   \\
  &                                               & z                                           &                                               &   \\
c &                                               &                                             &                                               & y
\end{tikzcd}

\caption{The group configuration}\label{figuregroupconf}

\end{figure}

\vspace{10pt}
Let $M'$ and $p'$ be as in \cref{conditionf2}. By \cref{wideimpiesnonfork} $p'$ does not $\forkindep^{f}$-fork over $M'$ in the sense of $L'$ and by \cref{nonkimforkwide2} $p$ does not Kim-fork over $M$ in the sense of $L$. By \cref{h4} $p$ does not $\forkindep^{ACF}$-fork over $M'$. Let $b\models p'_{\vert M'}$, $x\models p'_{\vert M'b}$ and $a\models p'_{\vert M'bx}$. Then $\tp_{L'}(a/M'b)$ is wide and both $\tp_{L'}(a/M'b)$ and $\tp_{L'}(b/M'a)$ do not fork over $M'$. 

\vspace{10pt}
Let $c:=b\cdot a$, $y:=a\cdot x$ and $z=b\cdot a\cdot x$. Then $\tp(a/M')$ is wide and since $a\equiv_{M'}b\equiv_{M'}x$ these three tuples have the same transcendence degree $n$ over $M'$. The tuples $a,b,c,x,y,z$ form a group configuration over $M'$ in $\mathbb{F}^{alg}$:

\begin{enumerate}
    \item Since $p'$ is finitely satisfiable in $M'$ we have that $a\forkindep^{ACF}_{M'}b,x$ and $x\forkindep^{ACF}_{M'}b$, so $a\forkindep^{ACF}_{b}$ and $a\forkindep^{ACF}_{M'}x$. Since $a \forkindep^{ACF}_{M'b}x$ by base monotonicity we have $c\forkindep^{ACF}_{M'b}x$ and by transitivity we deduce $c\forkindep^{ACF}_{M'}x$. Similarly we can show that $b\forkindep^{ACF}_{M'}y$. The types $\tp(b/M'a)$ and $\tp(x/M'a)$ are wide. Since the ideal $\mu_{0}$ is invariant by translation the types $\tp(c/M'a)$ and $\tp(y/M'a)$ are also wide, so by \cref{h4} $c\forkindep_{M'}^{ACF}a$ and $y\forkindep_{M'}^{ACF}a$. The remaining relations can be deduced from these by the properties of $\forkindep^{ACF}$.
    \item Two points in the same line are inter-algebraic over the third one and $M'$ since the third point can be obtained by using the group law which is definable over $M'$.
    \item As written previously $a\forkindep^{ACF}_{M'}b,x$ and $x\forkindep^{ACF}_{M'}b$. From the previous relations, the fact that the group law is algebraic in the sense of $F^{alg}$ and the properties of $\forkindep^{ACF}$ we can deduce the remaining independence relations for $3$.
\end{enumerate}

\subsubsection{Germs and the Hrushovski-Weil theorem}

We recall the definition of the germ of a function and the `Group Chunk Theorem' which we will apply in $\mathbb{F}^{alg}$. A presentation of this result can be found in \cite{bouscaren1989group}. We will here refer here to the course of Bays on geometric stability theory \cite{bays2018geometric} which suits us better. To keep things simple we state these notion in the context of $\mathbb{F}^{alg} \models ACF$.

\begin{definition}
Let $p_{0}, q_{0} \in S(A)$ be stationary types. In our context by stability and elimination of imaginaries this just means types over an algebraically closed subfield of $\mathbb{F}^{alg}$. Let $p,q\in S(\mathbb{F}^{alg})$ be their respective global non-forking extensions. 

\vspace{10pt}
Let $f$ be a definable partial function and let $\varphi(x,y,m)$ be a formula defining $f$ over the parameters $m$. We say that $f$ is defined at $p$ if $p(x) \models \exists!y(\varphi(x,y,m))$, which we can also write $p(x) \models x \in \operatorname{dom}(f)$.

\vspace{10pt}
Two definable partial functions $f_1,f_2$ defined at $p$ are said to have the same germ at $p$ if $p(x) \models f_1(x) = f_2(x)$. This is a definable equivalence relation between definable partial functions, the class of $f$ is denoted $\tilde{f}$ and we call it the \emph{germ of $f$ at $p$}.

\vspace{10pt}
To a germ $\tilde{f}$ corresponds a unique function on $p$: If $f_1,f_2$ are two representatives of $\tilde{f}$ defined over parameters $m_{1}$ and $m_{2}$ respectively, then $f_{1}(a)=f_{2}(a)$ whenever $a\models p\vert_{Am_{1}m_{2}}$. We write $\tilde{f}: p \rightarrow q$ if when $f$ is a representative of $\tilde{f}$ defined over parameters $b$ then $f(a)\models q\vert_{Ab}$ whenever $a\models p\vert_{Ab}$.

\vspace{10pt}
$Aut(\mathbb{F})$ acts on germs: Let $\sigma \in Aut(\mathbb{M})$ and $\tilde{f}: p \rightarrow q$ a germ on $p$. If $f_1,f_2$ are two representatives of $\tilde{f}$ then $f_1^{\sigma},f_2^{\sigma}$ have the same germ at $p^{\sigma}$, which we write $\tilde{f}^{\sigma}: p^{\sigma} \rightarrow q^{\sigma}$.

\vspace{10pt}
We say that a tuple $\bar{b}$ in $\mathbb{F}^{alg}$ is a code for $\tilde{f}$ if:

\begin{center}
$\bar{b} = \bar{b}^\sigma$ if and only if $\tilde{f} = \tilde{f}^\sigma$ for all $\sigma \in \operatorname{Aut}(\mathbb{F}^{alg})$.\end{center}

We then define $\ulcorner \tilde{f} \urcorner:= \operatorname{dcl}^{\mathrm{eq}}(\bar{b})$, this is in fact well-defined since two codes are fixed by the same automorphisms, so they are interdefinable. Since $ACF$ is stable and has elimination of imaginaries such a code always exists: The type $p$ is definable over $A$. Let 
$\varphi(x,y,n)$ be a formula defining a representative $f$ of $\tilde{f}$. Then $f=f_{n}$ where $(f_{z})_{z}$ is a definable family of partial functions defined by $\varphi(x,y,z)$. We consider the relation:

\begin{center} $E_{f}(m,m'):= d_{p}x.(f_{m}(x)=f_{m'}(x))$
\end{center}

Then $E_{f}(m,m')$ if and only if $f_{m}$ and $f_{m'}$ have the same germ at $p$. If $\sigma \in Aut(\mathbb{M})$ then $f_{m}^{\sigma}=f_{m^{\sigma}}$. From this it is easy to see that the $E_{f}$-class of $n$ is a code for $\tilde{f}$, i.e. $\ulcorner\tilde{f}\urcorner = \operatorname{dcl}^{\mathrm{eq}}(n/ E_{f})$.

\vspace{10pt}
If $\tilde{f}:p \rightarrow q$ and $\tilde{g}: q \rightarrow s$ are two germs we can define their composition $\tilde{g}\circ \tilde{f}: p \rightarrow s$, if $f$ and $g$ are representatives defined over parameters $n$ and $m$ respectively then $g \circ f$ is a representative of $\tilde{g}\circ \tilde{f}$.

\vspace{10pt}
If a germ $\tilde{f}:p \rightarrow q $ defines an injective function, i.e. if for any representative $f$ defined over parameters $n$ there is $\theta \in p$ such that $\models ((\theta(x)\wedge\theta(x') )\implies((f(x)=f(x'))\implies (x=x')))$, then we can define the inverse germ $\tilde{f}^{-1}$ of $\tilde{f}$ which is the unique germ $\tilde{f}^{-1}: q \rightarrow p$ such that $\tilde{f}^{-1}\circ \tilde{f} = \tilde{id}$.
\end{definition}

\begin{definition}
If $p, q, s \in S(N)$ are stationary, a \emph{family $\tilde{f}_s$ of germs $p \rightarrow q$} is a family $\tilde{f}_s:= (\tilde{f}_b)_{b \models s}$ of germs at $p$ of an $N$-definable family $f_b$ of partial functions, which is such that $\tilde{f}_b: p \rightarrow q$ whenever $b \models s$. We say that :

\begin{itemize}
  \item The family is \emph{canonical} if $b$ is a code for $\tilde{f}_b$, for all $b \models s$.
  \item The family is \emph{generically transitive} if $\tilde{f}_b(x) \forkindep^{f}_{N} x$ for some (any) $b$ and $x$ such that $b \models s$ and $x \models p\vert_{Nb}$.
\end{itemize}
\end{definition}

\begin{lemma}\cite[Lemma 5.1]{bays2018geometric}\label{generictranscano} Let $(b,z,y)$ be a triplet of tuples and $B$ be a tuple of parameters satisfying $b\forkindep^{ACF}_{B}z$, $b\forkindep^{ACF}_{B}y$ and $y\in dcl(Bbz)$. Define $s:=\tp(b/B)$, $q:=\tp(y/B)$ and $p:=\tp(z/B)$. Let $f_{b}(z)=y$ be a formula witnessing $y\in dcl(bz)$. We can consider the family of germs $\tilde{f}_{s} : p \rightarrow q $. 

\begin{itemize}
    \item[(i)] $\tilde{f}_{s}$ can be taken invertible if and only if $z\in dcl(Bby)$;
    \item[(ii)] $\tilde{f}_{s}$ is generically transitive if and only if $z\forkindep^{ACF}_{B}y$;
    \item[(iii)] $\ulcorner \tilde{f}_{b}\urcorner =cb(zy/Bb)$. So $\tilde{f}_{s}$ is canonical if and only if $cb(zy/Bb)=dcl(b)$.
\end{itemize}
\end{lemma}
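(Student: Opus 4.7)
My plan is to prove the three items separately, since each is essentially a translation of a condition on the family $(f_b)$ into a condition expressible via independence or via the canonical base. Throughout I work inside $\mathbb{F}^{alg} \models ACF$, which is stable with elimination of imaginaries.

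For (i), the direction ($\Rightarrow$) is immediate: pick a representative $g_{b}$ of $\tilde{f}_{b}^{-1}$ defined over $b$; then for $z \models p|_{Bb}$ and $y := f_{b}(z)$, I get $z = g_{b}(y) \in dcl(Bby)$. For ($\Leftarrow$), from $z \in dcl(Bby)$ I extract a $B$-definable family of partial functions $g_{b}(y)$ such that $g_{b}(y) = z$ for $z \models p|_{Bb}$; by compactness I can arrange $f_{b}$ and $g_{b}$ to have compatible definable domains on which $g_{b} \circ f_{b} = \mathrm{id}$ holds on a formula in $p$, so $\tilde{g}_{b} = \tilde{f}_{b}^{-1}$ at $p$.

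For (ii), this is a direct unwinding of definitions: $\tilde{f}_{s}$ is generically transitive iff $\tilde{f}_{b}(z) \forkindep^{f}_{B} z$ for $b \models s$ and $z \models p|_{Bb}$, and forking independence in ACF coincides with $\forkindep^{ACF}$. Since $\tilde{f}_{b}(z) = y$, this rewrites literally as $y \forkindep^{ACF}_{B} z$.

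For (iii), the strategy is to identify the code of $\tilde{f}_{b}$ with $cb(zy/Bb)$ by showing both have the same pointwise stabilizer in $\mathrm{Aut}(\mathbb{F}^{alg}/B)$; by stability and elimination of imaginaries, two imaginaries with the same stabilizer are interdefinable, which gives the desired equality of $dcl^{\mathrm{eq}}$-closures. Given $\sigma \in \mathrm{Aut}(\mathbb{F}^{alg}/B)$, I have $\sigma(\tilde{f}_{b}) = \tilde{f}_{\sigma(b)}$, and these germs agree iff $f_{b}$ and $f_{\sigma(b)}$ agree on $p|_{Bbb^{\sigma}}$. Picking $z$ realising that extension and setting $y := f_{b}(z)$, this agreement amounts to $(z,y)$ and $(z, f_{\sigma(b)}(z))$ realising the same type over $Bbb^{\sigma}$. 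Now $\tp(zy/Bb)$ is stationary: $\tp(z/Bb) = p|_{Bb}$ is stationary in ACF and $y \in dcl(Bbz)$, so its unique non-forking extension over any larger set determines $y$ from $z$. Hence agreement of germs is equivalent to $\tp(zy/Bb) = \sigma(\tp(zy/Bb))$ restricted to $Bbb^{\sigma}$, which by uniqueness of non-forking extensions is equivalent to $\sigma$ fixing $cb(zy/Bb)$.

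The main obstacle is part (iii), specifically the bidirectional equivalence between "$f_{b}$ and $f_{\sigma(b)}$ agree at $p$" and "$\sigma$ fixes $cb(zy/Bb)$"; this requires keeping careful track of the base over which we take generic realisations, and invoking stationarity of $\tp(zy/Bb)$ at the right moment. Items (i) and (ii) are essentially bookkeeping once the definitions are laid out.
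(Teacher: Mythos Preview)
The paper does not supply its own proof of this lemma: it is quoted verbatim from Bays's notes \cite[Lemma 5.1]{bays2018geometric} and used as a black box in the construction of \cref{groupconfigdsf}, so there is nothing in the paper to compare your argument against. Your proof is the standard one and is correct; the only place that deserves a small tightening is the last sentence of (iii), where ``$\tp(zy/Bb) = \sigma(\tp(zy/Bb))$ restricted to $Bbb^{\sigma}$'' is not literally well-formed (the two types live over different bases $Bb$ and $B\sigma(b)$): what you mean, and what makes the argument go through, is that the unique global non-forking extensions of these two stationary types coincide, which is exactly the condition that $\sigma$ fixes $cb(zy/Bb)$.
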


\begin{definition} An homogeneous space consists of a group $G$ together with a set $S$ on which $G$ acts transitively. We say that $(G,S)$ is an $A$ type-definable homogeneous space for a set of parameters $A$ whenever $G$ is an $A$ type-definable group, $S$ is an $A$ type-definable set and the action of $G$ on $S$ is also type-definable over $A$. In that case we say that $S$ is connected if and only if there exist a global type $s\in S$ such that $g\cdot b\models s\vert_{Ag}$ whenever $g\in G$ and $b\models s\vert_{Ag}$. This type is then called the generic type of $S$.
\end{definition}

\begin{prop} \cite[Lemma 5.2]{bays2018geometric}\label{groupchunk}(Hrushovski-Weil Group chunk Theorem) Let $p, s \in S(N)$ be stationary, and suppose that $\Tilde{f}_{s}$ is a generically transitive canonical family of invertible germs $p \longrightarrow p$. Suppose that $\Tilde{f}_{s}$ is closed under inverse and generic composition, meaning that for $b\models s$ there exists $b'\models s$ such that $\Tilde{f}_{b}^{-1}= \Tilde{f}_{b'}$, and for $b_{1},b_{2}\models s$, $b_{1}\forkindep^{f}_{\emptyset}b_{2}$ there exists $b_{3} \models s$ such that:

\begin{center}
$\Tilde{f}_{b_{1}} \circ \Tilde{f}_{b_{2}} = \Tilde{f}_{b_{3}}$, and $b_{i}\forkindep^{f}_{\emptyset}b_{3}$ for $i=1, 2$.\end{center}

Then there exists a connected faithful $\emptyset$ type-definable homogeneous space $(G,S)$, a definable embedding of $s$ into $G$ as its unique generic type, and a definable embedding of $p$ into $S$ as its unique generic type, such that the generic action of $s$ on $p$ agrees with the action $*$ of $G$ on $S$, i.e. $\Tilde{f}_{b}(a)=b * a$ for $b\models s$ and $a\models p$ such that $a\forkindep^{f}_{\emptyset}b$.\end{prop}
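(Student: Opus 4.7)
\vspace{10pt}
The plan is a Weil-style construction carried out inside $\mathbb{F}^{alg} \models ACF$, where stationarity and full elimination of imaginaries are available. I would build $G$ as the set of codes of germs of the form $\tilde{f}_{b_1} \circ \tilde{f}_{b_2}^{-1}$ for $b_1, b_2 \models s$ independent over $N$, and $S$ as a parallel space of equivalence classes of generics of $p$ under the germ action. Canonicity of $\tilde{f}_s$ guarantees that each such germ has a code, and elimination of imaginaries places these codes in $\mathbb{F}^{alg}$, which makes the resulting sets type-definable over $N$.

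\vspace{10pt}
The first step is to verify a generic group law on $s$. Given mutually independent $b_1, b_2, b_3 \models s$, closure under generic composition produces $b_2 \cdot b_1 \models s$ still in a good independence position relative to $b_3$, and generic associativity follows because $(\tilde{f}_{b_3} \circ \tilde{f}_{b_2}) \circ \tilde{f}_{b_1}$ and $\tilde{f}_{b_3} \circ (\tilde{f}_{b_2} \circ \tilde{f}_{b_1})$ agree at any generic $a \models p$ independent from $b_1 b_2 b_3$. Next I take the carrier of $G$ to be the set of codes described above, and define multiplication by sliding representatives into compatible position: given $g_1 = [\tilde{f}_{a_1} \circ \tilde{f}_{a_2}^{-1}]$ and $g_2 = [\tilde{f}_{c_1} \circ \tilde{f}_{c_2}^{-1}]$, use stationarity of $s$ over $N$ and the generic composition hypothesis to replace the representative of $g_2$ by one with $c_1 = a_2$, and set $g_1 \cdot g_2 := [\tilde{f}_{a_1} \circ \tilde{f}_{c_2}^{-1}]$. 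The identity and inverses are immediate, and the embedding $s \hookrightarrow G$ sends $b$ to $[\tilde{f}_{b \cdot b_0} \circ \tilde{f}_{b_0}^{-1}]$ for any $b_0 \models s$ independent from $b$; the image realises the unique generic type of $G$, forcing connectedness. The homogeneous space $S$ is built in parallel from $p$ with the induced $G$-action extending the germ action.

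\vspace{10pt}
The main obstacle will be the bookkeeping of independence conditions throughout. Well-definedness of multiplication, associativity, and the verification of the embedding properties all require repeatedly replacing germ-class representatives by $N$-conjugates in sufficiently independent position, using stationarity and $3$-amalgamation of independent stationary types in $ACF$, and checking that each replacement preserves the germ equalities being tested at a generic point of $p$. Once this bookkeeping is in place, faithfulness of the action follows from canonicity, and every remaining group-theoretic check reduces to equality of canonical germs evaluated at a single generic point of $p$.
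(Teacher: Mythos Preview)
The paper does not prove this proposition: it is quoted verbatim as \cite[Lemma 5.2]{bays2018geometric} and used as a black box in the construction of \cref{groupconfigdsf}, with no proof supplied. So there is nothing in the paper to compare your sketch against.

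That said, your outline is the standard Weil-style construction one finds in the cited reference: build $G$ from codes of germs $\tilde f_{b_1}\circ\tilde f_{b_2}^{-1}$ with $b_1,b_2\models s$ independent, use stationarity and elimination of imaginaries in $ACF$ to make this type-definable, and check the group axioms generically by evaluating germ equalities at a generic of $p$. The bookkeeping you flag --- repeatedly replacing representatives by independent conjugates via stationarity --- is exactly where the work lies, and your sketch identifies the right ingredients. One small point: the embedding of $s$ into $G$ is more naturally taken as $b\mapsto \ulcorner\tilde f_b\urcorner$ directly (canonicity of the family makes this injective), rather than via $[\tilde f_{b\cdot b_0}\circ\tilde f_{b_0}^{-1}]$; the latter works but adds a layer. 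Overall the approach is correct and matches the argument in Bays' notes that the paper is citing.
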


\subsubsection{Obtaining an algebraic group}

In the next proof we follow similar steps as in \cite[Theorem 6.1]{bays2018geometric}. We begin by constructing a new group configuration $(a',b',c',x',y',z')$ that satisfies the conditions of \cref{groupchunk}. This new configuration is obtained by applying some modifications to the original configuration $(a,b,c,x,y,z)$ from the previous section. These modifications consist of replacing some tuple by another one that is interalgebraic over $M'$ in the sense of $\mathbb{F}^{alg}$. 

\vspace{10pt}
To keep track of the modifications we did we will also consider a set of parameters $A \subseteq B \subseteq M$, finitely generated over $A$. $(a,b,c,x,y,z)$ is a group configuration over $M'$ whose interalgebraicity relations are over $A$ (they all come from the group law).

\vspace{10pt}
If $A\subseteq A'\subseteq M'$ is such that $(a,b,c,x,y,z)\forkindep^{ACF}_{A'}M$ then $(a,b,c,x,y,z)$ is a group configuration over $A'$. Let $A'$ be such a set, $A'$ can be taken finitely generated over $A$ (we just add to $A$ the coefficients of the algebraic relations between $(a,b,c,x,y,z)$ over $M'$), and let $B:=A'$. We construct a sequence of sets $B$ and a sequence of group configurations over $B$.

\begin{prop}\label{groupconfigdsf} There is an $M'$-definable algebraic group $H$ and dimension generic elements $a',b',c'\in H(\mathbb{F})$ such that $a'\cdot b'=c'$, $(M'a')^{alg}=(M'a)^{alg}$, $(M'b')^{alg}=(M'b)^{alg}$ and $(M'c')^{alg}=(M'c)^{alg}$. In particular $a$ and $a'$, $b$ and $b'$, $c$ and $c'$ are interalgebraic over $M'$ in the model theoretic sense.\end{prop}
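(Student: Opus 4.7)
The plan is to apply the Hrushovski--Weil Group Chunk Theorem (\cref{groupchunk}) in the stable theory $ACF = \operatorname{Th}(\mathbb{F}^{alg})$ to the configuration $(a,b,c,x,y,z)$ over $M'$ constructed in the previous subsection. The argument follows the standard proof of the group configuration theorem and proceeds in three stages: normalize the configuration so that the families of germs attached to its lines are canonical and invertible; verify generic transitivity together with closure under generic composition and inversion; then algebraize the resulting type-definable group.

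For the normalization I would perform the classical ``staircase'' modifications, replacing each of the six tuples by an $M'$-interalgebraic tuple in $\mathbb{F}^{alg}$ so that along each line in the configuration the required definable closure relations hold. Using \cref{generictranscano} as a template, I replace $a$ by a tuple coding $\cb(xy/M'a)$ and thicken $x,y$ appropriately so that $y \in \operatorname{dcl}(M'ax)$ and $x \in \operatorname{dcl}(M'ay)$; analogous replacements are performed along the lines $\{b,y,z\}$, $\{c,x,z\}$ and $\{a,b,c\}$. Since interalgebraicity over $M'$ is preserved throughout, we may assume that the new configuration $(\hat a,\hat b,\hat c,\hat x,\hat y,\hat z)$ produces invertible canonical germs $\tilde f_{\hat a}$, $\tilde g_{\hat b}$, $\tilde h_{\hat c}$ associated respectively to the lines $\{\hat a,\hat x,\hat y\}$, $\{\hat b,\hat y,\hat z\}$ and $\{\hat c,\hat x,\hat z\}$; generic transitivity of each family is read off from the $\forkindep^{ACF}$-independence of two points on a line, which comes from the dimension count already established when the configuration was built.

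Setting $s := \tp(\hat c/M')$, the configuration identity $\hat z = \hat b \cdot \hat a \cdot \hat x$ together with $\hat c = \hat b\cdot \hat a$ and $(M'\hat c)^{alg}=(M'\hat a\hat b)^{alg}$ translates into $\tilde h_{\hat c} = \tilde g_{\hat b}\circ \tilde f_{\hat a}$. For two $M'$-independent realizations $\hat c_1,\hat c_2 \models s$, expressing each as a product of independent elements $\hat a_i, \hat b_i$ and composing the corresponding germs yields a canonical invertible germ coded by a third realization $\hat c_3 \models s$ with $\hat c_i \forkindep^{ACF}_{M'} \hat c_3$ for $i=1,2$; closure under inverses follows from the invertibility of the germs combined with the symmetric role of the three collinear points on each line. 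Applying \cref{groupchunk} to the family $\tilde h_s$ then yields a connected $M'$-type-definable homogeneous space $(G,S)$ with $s$ embedded as the unique generic type of $G$.

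Finally, for the algebraization, $ACF$ eliminates imaginaries, and by Weil's classical theorem on pre-groups (see the presentations in \cite{bouscaren1989group} and \cite{bays2018geometric}) every connected type-definable group in $ACF$ is definably isomorphic to the $\mathbb{F}^{alg}$-points of a connected algebraic group defined over the same parameters; this yields an algebraic group $H$ over $M'$ with $G = H(\mathbb{F}^{alg})$, and passing to $\mathbb{F}$-rational points gives the desired $H$. Taking $c'$ to be the image of $\hat c$ under the embedding of $s$, and letting $a',b'$ be the images of $\hat a,\hat b$ (passing to $H^{op}$ if necessary so that the group law in $H$ satisfies $a'\cdot b' = c'$), each of $a',b',c'$ is dimension-generic in $H(\mathbb{F})$, and the chain of interalgebraicities $(M'\hat a)^{alg}=(M' a)^{alg}$ and its analogues yield the required conclusion. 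The main technical obstacle is the normalization stage: one must arrange canonicity and invertibility of all three germ families simultaneously without losing either the interalgebraicity with the original configuration tuples or the $\forkindep^{ACF}$-independences inherited from it.
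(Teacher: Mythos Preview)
Your outline correctly identifies the overall structure (normalize, apply \cref{groupchunk}, algebraize via Weil), and this is indeed what the paper does. However, you have missed the central difficulty that distinguishes this proposition from the standard group configuration theorem: the conclusion requires $a',b',c'\in H(\mathbb{F})$, not merely $a',b',c'\in H(\mathbb{F}^{alg})$. Your normalization step replaces the tuples by ``$M'$-interalgebraic tuples in $\mathbb{F}^{alg}$'' and later you write ``passing to $\mathbb{F}$-rational points gives the desired $H$'', but nothing in your argument guarantees that the modified tuples, the codes of the germs, or their images under the embedding into $G$ actually land in $\mathbb{F}$. A canonical base $\cb(xy/M'a)$ computed in $\mathbb{F}^{alg}$ is a priori only an element of $\mathbb{F}^{alg}$, and the final embedding of $s$ into the algebraic group is a map of $\mathbb{F}^{alg}$-points.

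The paper handles this by a specific implementation of the staircase modifications designed to keep everything inside $\mathbb{F}$. Each modification step picks an auxiliary element of $G(M')$ of maximal transcendence degree (this is where the hypothesis that $M'$ has infinite transcendence degree over $A$, arranged in \cref{conditionf2}, is used), forms products in $G$ (which remain in $\mathbb{F}$ by (H2)), and then replaces the relevant tuple by the tuple of \emph{coefficients of its minimal polynomial} over the other tuples---these coefficients lie in $\mathbb{F}$ because the data does. The paper then invokes the key observation that $\mathbb{F}$ is definably closed in $\mathbb{F}^{alg}$, so any imaginary of $\mathbb{F}^{alg}$ definable from parameters in $\mathbb{F}$ has a representative in $\mathbb{F}$; this is what allows one to take the germ codes $\ulcorner \tilde f_{b''}^{-1}\circ\tilde f_{b'}\urcorner$ in $\mathbb{F}$ at the final stage. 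Without these rationality bookkeeping steps your argument only recovers the classical conclusion over $\mathbb{F}^{alg}$, which is not what the proposition asserts and is not sufficient for the application in \cref{resultatgroup}.
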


\begin{proof}
Let $(a',b',c',x',y',z') := (a,b,c,x,y,z)$. We begin by adding some parameters to $B$ from $M'$ in order to make $b'$ and $y'$ interdefinable over $Bz'$ in the sense of $\mathbb{F}^{alg}$.

\begin{figure}[!ht]
\centering
\resizebox{0.4\textwidth}{!}{%
\begin{circuitikz}
\tikzstyle{every node}=[font=\Huge]

\node [font=\Huge] at (17.5,9.5) {a'};
\node [font=\Huge] at (14.5,5.5) {b'};
\node [font=\Huge] at (12,1.5) {c'};
\node [font=\Huge] at (20.5,5.5) {x'};
\node [font=\Huge] at (23,1.5) {y'};
\node [font=\Huge] at (17.5,3.5) {z'};
\node [font=\Huge] at (23,10.75) {a''};
\node [font=\Huge] at (8.25,1.5) {c''};
\node [font=\Huge] at (23.25,5.5) {x''};
\draw [short] (12.5,1.5) -- (17.5,9);
\draw [short] (17.5,9) -- (22.5,1.5);
\draw [short] (20,5.25) -- (12.5,1.5);
\draw [short] (15,5.25) -- (22.5,1.5);
\draw [ color={rgb,255:red,228; green,0; blue,0}, short] (22.5,1.5) -- (23,10.25);
\draw [ color={rgb,255:red,228; green,0; blue,0}, short] (23,10.25) -- (8.75,1.5);
\draw [ color={rgb,255:red,228; green,0; blue,0}, short] (8.75,1.5) -- (22.75,5.5);
\end{circuitikz}
}%
\caption{A copy of the configuration over the line $(b'z'y')$}
\end{figure}

\vspace{10pt}
Let $a''\in G(M')$ of maximal transcendence degree such that $a''\forkindep^{ACF}_{B}a'b'c'x'y'z'$ (such an element exists since $M'$ has infinite transcendence degree over $B$). Define $c'':=b'\cdot a'$ and $x'':=a''^{-1}\cdot y'$. Let $\overline{z}'$ be the tuple of the coefficients of the minimal polynomial of $z$ over $Ay'b'x''c''$, then $\overline{z}'\in \mathbb{F}$. We replace the set of parameters $B$ by $B=Aa''$ and replace: $y'$ by $y'x''$, $b'$ by $b'c''$, $z'$ by $\overline{z}'$, so that now $z'\in dcl^{alg}(By'b')$.

\vspace{10pt}
Let $b''\in G(M')$ of maximal transcendence degree such that $b''\forkindep^{ACF}_{B}a'b'c'x'y'z'$. Define $c'':=b''\cdot a' $ and $z'':= c''\cdot x'$. Let $\overline{x} \in \mathbb{F}$ be the tuple of the coefficients of the minimal polynomial of $x'$ over $Ba'y'c''z''$. We replace the set of parameters $B$ by $Bb''$ and replace: $a'$ by $a'c''$, $y'$ by $y'z''$, $x'$ by $\overline{x}$, so that now $x'\in dcl^{alg}(Ba'y')$.

\vspace{10pt}
Let $c''\in G(M')$ of maximal transcendence degree such that $c''\forkindep^{ACF}_{B}a'b'c'x'y'z'$. Define $a'':=c''\cdot b'^{-1}$ and $x'':=c'^{-1}\cdot z'$. Let $\overline{y} \in \mathbb{F}$ be the tuple of the coefficients of the minimal polynomial of $y'$ over $Bb'a''x''$. We replace the set of parameters $B$ by $Bc''$ and replace: $b'$ by $b'a''$, $z'$ by $z'x''$, $y'$ by $\overline{y}$, so that now $y'\in dcl^{alg}(Bb'z')$ and $z'\in dcl^{alg}(Bb'y')$. $\cb(y'z'/Bb')\in \mathbb{F}^{alg}$, and is interalgebraic with $b'$ over $B$ in $\mathbb{F}^{alg}$, it consists of the coefficients of the algebraic relations of $y'z'$ on $Bb'$, which are included in $\mathbb{F}$. 

\vspace{10pt}
At this step we have a group configuration in $\mathbb{F}^{alg}$ with points inside of $\mathbb{F}$ such that $y'\in dcl^{alg}(Bz'b')$, $z'\in dcl^{alg}(Bb'y')$, $b'$ is interdefinable with $\cb(z'y'/Bb')$ over $B$. This group configuration is interalgebraic with the original group configuration over $M'$. Notice that whenever an element of $\mathbb{F}^{alg}$ is definable over parameters in $\mathbb{F}$ then it lies in $\mathbb{F}$ ($\mathbb{F}$ is definably closed in $\mathbb{F}^{alg}$), in particular if an imaginary in $\mathbb{F}^{alg,eq}$ is definable over parameters in $\mathbb{F}$ by elimination of imaginaries in $\mathbb{F}^{alg}$ it has a representative in $\mathbb{F}$.

\vspace{10pt}
Let $p^{alg}=\tp(y'/B^{alg})$, $r^{alg}=\tp(b'/B^{alg})$ and $q^{alg}=\tp(z'/B^{alg})$ be types in the sense of the stable theory of $\mathbb{F}^{alg}$. Let $f_{r}$ be the generic family of invertible germs from $p^{alg}$ to $q^{alg}$ corresponding to $(b',z',y')$. To apply \cref{groupchunk} to $(b',y',z')$ we need to check whenever $b''\models r^{alg}_{\vert Bb'}$ and $dcl(d):=\lceil \Tilde{f}_{b''}^{-1} \cdot \Tilde{f}_{b'} \rceil$ then $d\forkindep^{ACF}_{B}b'$ and $d\forkindep^{ACF}_{B}b''$. This follows from the same verification as in $(II)$ of \cite[Theorem 6.1]{bays2018geometric}.

\vspace{10pt}
We can apply the group chunk theorem \cref{groupchunk} to get a connected faithful homogeneous space $(H,S)$ type-definable in $\mathbb{F}^{alg}$ over $B$ and a definable embedding of $p^{alg}$ which is the generic type of $S$ into $H$ as its unique generic type $s_{G}$ in the sense of $\mathbb{F}^{alg}$.

\vspace{10pt}
We do one more base change to get a group configuration of $(H,S)$. Let $b''\in G(M')$ of maximal transcendence degree such that $b''\forkindep^{ACF}_{B}a'b'c'x'y'z'$. Define $y''=z'\cdot b''^{-1}$. By the previous remark about imaginaries we can find $g\in M'$ such that $dcl^{eq}(g)=\lceil \Tilde{f}_{b''}^{-1} \cdot \Tilde{f}_{b'} \rceil$. Then $g\models s_{G}$, $y',y''\models p^{alg}$ and $y''=g\cdot y'$.

\vspace{10pt}
We add $b''$ to the parameters and interalgebraically replace $b'$ by $g$ and $z'$ by $g\cdot y'\models p^{alg}$. Let $c''\in G(M')$ of maximal transcendence degree such that $c''\forkindep^{ACF}_{B}a'b'c'x'y'z'$. Define $b''=c''\cdot a'^{-1}$ and $z'':=c''\cdot x'^{-1}$. We add $c''$ to the parameters and interalgebraically replace $a'$ by $b''\models s_{G}$ and $x'$ by $z''$. Let $h:=b'\cdot a'^{-1}$. Then $x'=a'\cdot y'$, $z'=b'\cdot y'$ so $z'=h'\cdot x'$. By \cref{generictranscano} $\cb(x'z'/Ba'b')=h$. Since $x'$ and $c'$ are interalgebraic over $B$ we get that $\cb(x'z'/Ba'b')^{alg}=\cb(x'z'/Bc')^{alg}=acl^{eq}(Bc')$, and we can interalgebraically replace $c'$ with $h$. Then $(a',b',c',x',y',z')$ is a group configuration of $(H,S)$.

\vspace{10pt}
In we look at the edge $(a',b',c')$ of the configuration we have three points of a type-definable group $H$ over $B$ in $\mathbb{F}^{alg}$. Such a group $H$ is definable in $ACF$ and is definably isomorphic to an algebraic group, so by adding to $B$ the parameters under which this bijection is defined, which we can find in $M'$, and composing by this bijection we can assume that $(a',b',c')$ lies in an algebraic group $H$ definable over $B$. Then each tuple $a,b$ and $c$ are interalgebraic in the sense of $\mathbb{F}^{alg}$ over $M'$ with their corresponding primed tuple.
\end{proof}

\begin{theorem}\label{resultatgroup} There exists an algebraic group $H$ definable over $M$, an $M$ type-definable (in the sense of $L$) subgroup $G_{0}$ of bounded index in $G$ and a definable finite to one homomorphism from $G_{0}$ to $H(\mathbb{F})$.
\end{theorem}

\begin{proof}
Consider the algebraic group $H$ and the interalgebraic elements $a$ and $a'$ from \cref{groupconfigdsf}.

\vspace{10pt}
We define the ideal $\mu$ on $G\times H$ as set of the definable subsets of $G\times H$ whose projection to $G$ is in $\mu_{0}$. We define the ideal $\lambda$ as the set of the definable subsets of $G\times H$ whose projections to $G$ and $H$ each have finite fibers. Let $\Tilde{p}'=\tp_{L'}(a,a'/M')$ and $p'=\tp_{L'}(a/M')$. $a$ and $a'$ are interalgebraic over $M'$ so $\Tilde{p}$ is wide (in the sense of $\mu$ since $\tp(a/M')$ is wide in the sense of $\mu_{0}$) and medium (since its coordinates are interalgebraic), and satisfies the conditions of \cref{stabextended} (the conditions $A$ and $B$ follows from the same verifications as in the proof of \cite[Theorem 2.19]{montenegro2018stabilizersgroupsfgenericsntp2}, and $F$ holds by the choice of $M'$).

\vspace{10pt}
We can apply \cref{stabextended} to get that $K':=Stab^{\mu}(\Tilde{p}')=S^{\mu}(\Tilde{p})\cdot S^{\mu}(\Tilde{p}) \leq G\times H$ is a connected wide subgroup type-definable in $L'$. Since $K'$ is wide, $\pi_{1}(K')$ is $\mu_{0}$-wide, so by \cref{boundedindex} $\pi_{1}(K')$ is a type-definable subgroup of bounded index in $G$.

\vspace{10pt}
Consider the original language $L$ on $\mathbb{F}$. Let $\Tilde{p}$ be the restriction of $\Tilde{p}'$ to $L$ and $p$ the restriction of $p'$ to $L$. Define $K\leq G\times H$ as $K= Stab^{K}(\Tilde{p}) = S^{K}(\Tilde{p})\cdot S^{K}(\Tilde{p})$. $K$ is a group type-definable over $M$ by (H1).

\vspace{10pt}
\textbf{Claim 1:} $K' \leqslant K$.

\vspace{10pt}
\textit{Proof of Claim 1.} We show that $S^{\mu}(\Tilde{p}')\subseteq S^{K}(\Tilde{p})$. If $g\in S^{\mu}(\Tilde{p}')$ there is $\alpha\models \Tilde{p}'$ such that $g\cdot \alpha \models \Tilde{p}'$ and $\tp(\alpha/M'g)$ is $\mu$-wide. So $\alpha\models \Tilde{p}$ and $g\cdot \alpha \models \Tilde{p}$. Since $\mu$ is invariant by translation $\tp(g\cdot \alpha/M'g)$ is $\mu$-wide. By \cref{nonkimforkwide2} we have that $g\cdot \alpha \forkindep^{K}_{M}g$ and $\alpha\forkindep^{K}_{M}g$, so $g\in S^{K}(\Tilde{p})$.\hspace*{0pt}\hfill\qedsymbol{}

\vspace{10pt}
We consider the two projections $\pi_{1}: K \rightarrow G$ and $\pi_{2}: K \rightarrow H$. $K' \leq K \leq G\times H$, so $\pi_{1}(K)$ has bounded index in $G$. Let $K_{1}:= Ker(\pi_{1})$.

\vspace{10pt}
\textbf{Claim 2:} Both projections from $K$ to $G$ and $H$ have finite fibers.
\vspace{10pt}

\textit{Proof of Claim 2.} We begin by showing that whenever $(g,\Tilde{g})\in S^{K}(\Tilde{p})$ then $g$ and $\Tilde{g}$ are interalgebraic over $M$ in the sense of $\mathbb{F}^{alg}$. Let $(g,\Tilde{g})\in S^{K}(\Tilde{p})$, there is a realization $(a,\Tilde{a})\models \Tilde{p}$ such that $g,\Tilde{g}\forkindep^{K}_{M}a,\Tilde{a}$ and $g\cdot a,\Tilde{g}\cdot \Tilde{a}\models \tilde{p}$, so $a$ and $\Tilde{a}$ are interalgebraic over $M'$ and $g\cdot a$ and $\Tilde{g}\cdot \Tilde{a}\models p$ are also interalgebraic over $M'$, both in the sense of $\mathbb{F}^{alg}$.

\vspace{10pt}
From this we deduce that $g$ and $\Tilde{g}$ are interalgebraic over $Ma$. Since $g,\Tilde{g}\forkindep^{K}_{M}a,\Tilde{a}$ by base monotonicity of $\forkindep^{ACF}$ we get that $\Tilde{g}\forkindep^{ACF}_{Mg}a$, so $\trdeg(\Tilde{g}/M,g)=\trdeg(\Tilde{g}/M,g,a)=0$, and similarly $\trdeg(g/M,\Tilde{g})=0$, so $g$ and $\Tilde{g}$ are interalgebraic over $M$ in the sense of $\mathbb{F}^{alg}$.

\vspace{10pt}
Assume that $(1_{G},h)\in K$ for some $h\in H$. We show that in that case $h \in M^{alg}\cap \mathbb{F} = M$. There is $(g,\Tilde{g})\in S^{K}(\Tilde{p})$ such that $(g^{-1},\Tilde{g}^{-1}\cdot h)\in S^{K}(\Tilde{p})$. Consider some $(g',\Tilde{g}')\in S^{K}(\Tilde{p})$ such that $(g',\Tilde{g}')\forkindep^{K}_{M}g,\Tilde{g},h$. Then by \cref{stabilizateur} $(g\cdot g',\Tilde{g}\cdot \Tilde{g}')\in S^{K}(\Tilde{p})$ and $(g'^{-1}\cdot g^{-1},\Tilde{g}'^{-1}\cdot \Tilde{g}^{-1}\cdot h)\in S^{K}(\Tilde{p})$.

\vspace{10pt}
We know from the previous point and \cref{stabilizateur} that $g\cdot g'$ and $\Tilde{g}\cdot \Tilde{g}'$ and $g'^{-1}\cdot g^{-1}$ and $\Tilde{g}'^{-1}\cdot \Tilde{g}^{-1}\cdot h$ are interalgebraic over $M$, so in particular $h\in acl(M,g'\cdot g)$. Also since $g'\forkindep^{ACF}_{M}g,h$ we have $g\cdot g'\forkindep^{ACF}_{M,g}h$ and by maximality of the transcendence degree $g'\cdot g \forkindep^{ACF}_{M}h$, which implies that $h\in M^{alg}\cap \mathbb{F}=M$. $Ker(\pi_{1})$ is a type-definable subgroup included in $acl(M)$, so by compactness the partial type defining this set implies an algebraic formula and so $Ker(\pi_{1})$ is finite. The proof for $\pi_{2}$ is similar.\hspace*{0pt}\hfill\qedsymbol{}

\vspace{10pt}
By \textbf{Claim 2} $K_{1}$ is finite. Note that $K_{1}$ is normal in $K$. $K_{1}$ is central in $K$ since $K$ is connected over $M$. Let $C\leq H$ be the centralizer of $\pi_{2}(K_{1})$ inside $H$. $C$ is an algebraic subgroup of $H$ and is definable in $L$ over $M'$. We can replace $H$ by $C/\pi_{2}(K_{1})$ which is also an algebraic group definable over $M'$, so we can assume that $K_{1}$ is trivial, so $K$ defines a finite to one homomorphism from $\pi_{1}(K)$ to $H$ and $G_{0}=\pi_{1}(K)$ is the subgroup we want.\end{proof}

\section{The case of $\omega$-free PAC fields}

In this section apply the result of the previous section to the case of a group definable in an $\omega$-free PAC field.


\subsection{Some facts about e-free PAC fields}

We begin by recalling some results of Chatzidakis and Ramsey about measures and definable groups in $e$-free PAC fields when $e$ is finite, these results are from \cite{chatzidakis2023measures}. On this topic we can also quote a more recent article of Chatzidakis and Ramsey, \cite{chatzidakis2025measures}, which generalizes these results to bounded perfect PAC fields.

\begin{prop} \cite[Corollary of 4.5]{chatzidakis1998generic}\label{pacchar0} Let $F$ be a PAC field of characteristic $0$. If $E\subseteq F$, then $acl(E)$ is obtained by taking the relative field theoretic algebraic closure in $F$.
\end{prop}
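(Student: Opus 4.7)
The inclusion $E^{alg} \cap F \subseteq \mathrm{acl}(E)$ is straightforward: any $a \in F$ algebraic over $E$ in the field sense is a root of some nonzero $P(X) \in E[X]$, so distinguishing $a$ among the finitely many roots of $P$ gives a formula over $E$ witnessing $a \in \mathrm{acl}(E)$. The substance of the statement is the reverse inclusion, and my plan is to prove it by contraposition.

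First I would reduce to the case where $E$ is relatively algebraically closed in $F$. Set $\tilde E := E^{alg} \cap F$. Every element of $\tilde E$ lies in $\mathrm{acl}(E)$ by the previous paragraph, so $\mathrm{acl}(E) \subseteq \mathrm{acl}(\tilde E)$; it thus suffices to show $\mathrm{acl}(\tilde E) = \tilde E$. From now on assume $E = \tilde E$ and pick $a \in F$ with $a \notin E$. Then $a$ is transcendental over $E$, and because we are in characteristic $0$, the hypothesis that $E$ is algebraically closed in $F$ (hence in $E(a)$) means exactly that $E(a)/E$ is a regular extension.

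The core of the proof is to produce infinitely many realizations of $\mathrm{tp}(a/E)$, which will force $a \notin \mathrm{acl}(E)$. The PAC hypothesis enters here: for $E$ a subfield of a PAC field $F$ that is relatively algebraically closed in $F$, two tuples $a, b \in F$ transcendental over $E$ with isomorphic $E$-algebras $E(a)$ and $E(b)$ realize the same type over $E$, provided the induced Galois-theoretic data (embeddings of $\mathrm{Gal}(E(a)^{alg}/E(a))$ and $\mathrm{Gal}(E(b)^{alg}/E(b))$ into $\mathrm{Gal}(E^{alg}/E)$) also match. In characteristic $0$, regularity of $E(a)/E$ makes this matching automatic, and one can apply the PAC amalgamation/embedding theorems (the content of Chatzidakis' Proposition 4.5) to extend the $E$-isomorphism $E(a) \to E(b)$ to an elementary map in a monster extension. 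Since there are unboundedly many transcendentals over $E$ in a saturated elementary extension, the $\mathrm{Aut}(\mathfrak{M}/E)$-orbit of $a$ is infinite, so $a \notin \mathrm{acl}(E)$.

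The main obstacle is Step~3: showing that $\mathrm{tp}(a/E)$ has unboundedly many realizations over a relatively algebraically closed $E$. This is not formal and genuinely uses the axiomatization of PAC fields via Galois-theoretic invariants, together with the fact that characteristic $0$ guarantees regularity of $E(a)/E$. In positive characteristic the analogous statement would require passing to the perfect closure, which is why the hypothesis $\mathrm{char}(F) = 0$ is essential here.
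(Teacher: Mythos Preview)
The paper does not give a proof of this proposition: it is stated as a citation of a result of Chatzidakis and no argument is supplied. So there is no in-paper proof to compare your proposal against; your write-up is effectively an expansion of what the citation points to.

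Your outline is the standard one and is essentially correct: the easy inclusion, the reduction to a relatively algebraically closed base $E$, and then showing that any transcendental $a$ over such $E$ has unbounded $\mathrm{Aut}(\mathfrak{M}/E)$-orbit via the PAC embedding/amalgamation theorems. One phrasing is misleading, though. You write that ``in characteristic $0$, regularity of $E(a)/E$ makes the Galois-theoretic matching automatic.'' That is not quite right: for a general PAC field (think of a pseudo-finite field), two transcendentals $a,b$ with $E(a)\cong_E E(b)$ need not have the same type over $E$, because the type also records which finite extensions of $E(a)$ live inside $F$ (e.g.\ whether $a$ is a square). Regularity does not force this data to agree. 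What characteristic $0$ and regularity actually buy you is that the amalgamation step goes through: one can \emph{construct} a conjugate $b$ of $a$ over $E$, with $b\neq a$, by taking a free copy of $\mathrm{acl}(Ea)$ over $E$ and embedding it over $E$ into the monster using the PAC property. Iterating gives the infinite orbit. So the argument is ``choose $b$ so that the Galois data matches'' rather than ``the Galois data matches for any $b$.'' With that correction, your sketch is a faithful unpacking of the cited result.
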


Let $k$ be an $e$-free PAC field of characteristic $0$. In \cite{chatzidakis2023measures} Chatzidakis and Ramsey define a measure on the definable subsets of an absolutely irreducible variety. We begin by recalling the definition of this measure. Let $V$ be an absolutely irreducible variety defined over $k$.

\begin{definition} Let $E$ be a subfield of $k$. A test formula $\theta(x)$ over $E$ is a boolean combination of formulas of the form $\exists y (P(x,y)=0 )$ with $y$ a single variable and $P\in E[x,y]$.
\end{definition}

\begin{prop} \cite[Corollary 2.4]{chatzidakis2023measures} Let $k$ be a perfect $e$-free PAC field and $E\subseteq k$ be a subfield of $k$. Every formula over a field $E$ is equivalent in $k$ to a test formula over $E$.
\end{prop}

\begin{definition} \cite[Definition 3.1]{chatzidakis2023measures} Let $V$ be an absolutely irreducible variety defined over an $e$-free PAC field $k$ and let $a$ be a generic of $V$ over $k$. We define the measure $\mu_{V}$ on definable subsets of $V$ in $k$. Let $\varphi(x,b)$ be a formula over $k$. We can assume that $\varphi$ is a test formula. Let $L$ be the splitting field over $k(a)$ of all the polynomials appearing inside of $\varphi(x,b)$. Whenever some field $K$ contains $k(a)$, the satisfaction of the formula $\varphi(a,b)$ inside of $K$ only depends on the $k(a)$ isomorphism type of $K \cap L$.

\vspace{10pt}
The measure consists of counting the intermediate fields $K$ up to isomorphism above $k(a)$ such that $k(a)\subseteq K \subseteq L$ and that are realized as $\mathcal{K}\cap L$ for $k\prec \mathcal{K}$. We will call such fields \emph{permissible}. A field $K$ is permissible if and only if $K/k$ is a regular extension and $\Gal(L/K)$ is an image of $\mathcal{G}(k)$ (in our case this is equivalent to saying that $\Gal(L/K)$ is $e$-generated). Then $\mu_{V}(\varphi(x,b))$ is the proportion of the permissible fields (up to $k(a)$ isomorphism) in which $\varphi(a,b)$ is satisfied.\end{definition}

Since the field $k$ is PAC notice that such a generic $a$ can be found inside of an elementary extension of $k$.

\begin{definition}
Let $G$ and $H$ be two definable groups in some structure.  
We say that $G$ and $H$ are \emph{definably isogenous} if there exist a definable subgroup $S$ of $G \times H$ such that
\begin{itemize}
    \item The projection of $S$ to $G$, denoted by $G_{S}$, has a finite index in $G$,
    \item The projection of $S$ to $H$, denoted by $H_{S}$, has a finite index in $H$,
    \item The kernel $\ker(S) = \lbrace g \in G$ : $(g,1) \in S\rbrace$ and the co-kernel $\operatorname{coker}(S) = \lbrace h \in H $ : $ (1,h) \in S \rbrace$ are finite.
\end{itemize}

This subgroup $S$ is then an isogeny between $G$ and $H$. If $S$ is an isogeny between $G$ and $H$ then there is an isomorphism between 
$G_S / \ker(S)$ and $H_S / \operatorname{coker}(S)$.
\end{definition}

\begin{theorem}\cite[Theorem C]{hrushovski1994groups}\label{efreefini} Let $G$ be a group definable in a perfect $e$-free PAC-field. Then there is an algebraic group $H$ defined over $F$ and a definable (in $F$) isogeny between $G$ and $H(F)$.
\end{theorem}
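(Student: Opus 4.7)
The plan is to derive the theorem as an instance of Proposition \ref{resultatgroup} applied to the $e$-free PAC field $F$ (with $e$ finite), by verifying hypotheses (H1), (H2), (H3) for this setting and then upgrading the conclusion using the boundedness of $F$. This uses the fact that $e$-free PAC fields of characteristic $0$ with finite $e$ are simple by \cite{fieldssimplicity}, so everything stays inside the simple world where Kim-forking coincides with non-forking.

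For (H1), simplicity gives $T$ NSOP$_1$ with existence, and Proposition \ref{cassimple} yields $Stab^{K}(p) = Stab^{f}(p) = S^{f}(p) \cdot S^{f}(p) = S^{K}(p) \cdot S^{K}(p)$ for every Lascar strong type $p$ in a type-definable group, which is what we need. For (H2), starting from any definable group $G$, I would pass to an $\forkindep^{ACF}$-generic connected subgroup: using \cref{pacchar0} the model-theoretic algebraic closure in $F$ is the field-algebraic closure, so the interdefinability relations on generic triples $(a,b,a\cdot b)$ are automatically field-algebraic, and one replaces $G$ by the type-definable subgroup generated by such generics.

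For (H3), I would use the Chatzidakis--Ramsey measure $\mu_{V}$ from \cite{chatzidakis2023measures} on an absolutely irreducible variety $V$ attached to $G$ (e.g.\ the Zariski closure of $G$ after the reduction in (H2)). Since $\mu_{V}$ is a ratio counting permissible intermediate fields, the value $\mu_{V}(\varphi(x,b))$ for a fixed formula $\varphi(x,y)$ takes only finitely many rational values as $b$ varies, so Lemma \ref{mesureprod} (applied to a suitable ultrapower realization of $F$, together with \cref{mesurefinie}) produces a countable expansion $L'$ in which the measure is definable. Taking $\mu_{0}$ to be the null ideal of $\mu_{V}$ gives an $S1$ ideal: if $\varphi(x,a_{i})\wedge \varphi(x,a_{j})\in \mu_{0}$ on an indiscernible sequence then by additivity the measures $\mu_{V}(\varphi(x,a_{i}))$ are all equal, and their pairwise inconsistency at the level of positive measure forces each to be zero. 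Translation invariance is inherited from the geometric origin of $\mu_{V}$: translation by an $F$-point is an $F$-rational automorphism of $V$ preserving regular/permissible extensions.

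With (H1)--(H3) in place, Proposition \ref{resultatgroup} gives an algebraic group $H/M$, a type-definable subgroup $G_{0}\leq G$ of bounded index, and a definable finite-to-one homomorphism $G_{0}\to H(\mathbb{F})$. To turn this into a virtual isogeny over $F$ one uses that $e$-free PAC fields with $e$ finite are bounded, hence their theories eliminate hyperimaginaries and every bounded-index type-definable subgroup of a definable group is in fact a definable finite-index subgroup, definable over $F$ after descending from $M$ via standard Galois-theoretic descent for bounded PAC fields. The finite kernel of the homomorphism then gives the virtual isogeny to $H(F)$. The main obstacle I expect is in (H3): matching the geometric measure $\mu_{V}$, which lives on a variety, with an ideal on the group $G$ that is simultaneously translation-invariant, $S1$ in a tame expansion $L'$, and compatible with (H1) (i.e.\ not accidentally collapsing Kim-forking after the expansion) — this is where one really needs the Chatzidakis--Ramsey explicit construction and the care taken in Lemmas \ref{nonkimforkwide2} and \ref{conditionf2}.
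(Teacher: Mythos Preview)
This theorem is not proved in the paper at all: it is quoted from \cite{hrushovski1994groups} and used as an input. In particular, the paper explicitly records (right after the statement) that Chatzidakis--Ramsey apply \cref{efreefini} and the resulting isogeny to transport the variety measure $\mu_H$ onto the definable group $G$, and this is how one obtains a translation-invariant measure on $G$ in the $e$-free setting. Your plan therefore has a genuine circularity: you want to verify (H3) for $G$ using a translation-invariant measure, but the only known construction of such a measure on an arbitrary definable group in an $e$-free PAC field already presupposes \cref{efreefini}.

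Your proposed escape route, taking $\mu_V$ on the Zariski closure $V$ of $G$, does not work for the reason you half-anticipate at the end. Hypothesis (H2) only says $a\cdot b\in (A,a,b)^{alg}$; it does \emph{not} say that multiplication extends to a morphism $V\times V\to V$, so left translation by $g\in G(F)$ is merely a definable bijection of $G$, not an $F$-rational automorphism of $V$. Hence there is no reason for $\mu_V$ to be translation-invariant, and your justification (``translation by an $F$-point is an $F$-rational automorphism of $V$'') is exactly the statement that $G$ is already (close to) an algebraic group, which is what you are trying to prove. The upshot is that Proposition~\ref{resultatgroup} is designed to go in the other direction: one feeds in the finite-$e$ case (via \cref{efreefini}) to produce measures on the coordinates of an ultraproduct, and then deduces the $\omega$-free statement \cref{omegafreegroup}. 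Reversing this to recover \cref{efreefini} from \cref{resultatgroup} would require an independent source of a translation-invariant $S1$ ideal on $G$, which the paper does not provide.
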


\begin{remark} The above theorem is stated for pseudo-finite fields in \cite{hrushovski1994groups}, Hrushovski and Pillay then state that it also holds for perfect PAC fields whose absolute Galois group is finitely generated as a profinite group.
\end{remark}

Chatzidakis and Ramsey apply \cref{efreefini} and the isogeny to move the measure $\mu_{H}$ they defined on $H$ into $G$ and prove the following:  

\begin{theorem} Let $k$ be a perfect $e$-free PAC field, and let $G$ be a group definable in $k$. Then $G$ is definably amenable.
\end{theorem}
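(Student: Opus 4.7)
The plan is to transport the measure $\mu_H$ built by Chatzidakis and Ramsey on an absolutely irreducible algebraic variety across the virtual isogeny provided by \cref{efreefini}. First I would apply \cref{efreefini} to obtain an algebraic group $H$ defined over $k$, a definable finite-index subgroup $G_0 \leq G$, a definable finite-index subgroup $H_0 \leq H(k)$, finite normal subgroups $N_G \trianglelefteq G_0$ and $N_H \trianglelefteq H_0$, and a definable isomorphism $\phi : G_0/N_G \to H_0/N_H$. The goal is then to produce a left-invariant finitely additive probability measure on the Boolean algebra of $k$-definable subsets of $G$.

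The key step is to build an invariant measure on $H(k)$. Since $k$ has characteristic $0$, every algebraic group over $k$ is smooth, so the connected component $H^{\circ}$ of $H$ is absolutely irreducible and defined over $k$. I would apply the Chatzidakis--Ramsey construction to $V = H^{\circ}$ to obtain a measure $\mu_{H^{\circ}}$ on the definable subsets of $H^{\circ}(k)$, and would then check that $\mu_{H^{\circ}}$ is invariant under translation by $k$-rational elements of $H^{\circ}$: this should follow from the fact that translation by $h\in H^{\circ}(k)$ sends a generic $a$ of $H^{\circ}$ over $k$ to a generic $h\cdot a$ and induces a $k(a)$-iso\-mor\-phism on the splitting fields of test formulas that preserves the notion of permissible intermediate field. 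Since $H(k)/H^{\circ}(k)$ is finite (the cosets being algebraic translates of $H^{\circ}$, each carrying a translated copy of the measure), I would extend $\mu_{H^{\circ}}$ to a translation-invariant probability measure $\mu_{H}$ on $H(k)$ by averaging uniformly over the finitely many cosets.

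Next I would push $\mu_H$ down from $H_0$ to $H_0/N_H$ by summing fibers (with the uniform weight $1/|N_H|$), transport via $\phi^{-1}$ to get a measure on $G_0/N_G$, pull back to $G_0$ by uniform averaging over the fibers of $G_0 \to G_0/N_G$, renormalize so that $\mu(G_0) = [G:G_0]^{-1}$ times the appropriate constant, and finally extend to $G$ by distributing the mass uniformly over the finitely many cosets of $G_0$ in $G$. The resulting measure $\mu_G$ on $k$-definable subsets of $G$ is a finitely additive probability measure by construction. Left invariance under any $g\in G$ would be verified as follows: multiplication by $g$ permutes the cosets of $G_0$ in $G$ (which carry equal mass), and on each coset the measure is, up to translation, the pullback of $\mu_H$ along a group homomorphism, so invariance reduces to the translation invariance of $\mu_H$ established above together with the fact that $\phi$ is a group isomorphism (modulo the finite kernels, which are absorbed by the uniform averaging).

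The main obstacle is the first part of the second step: verifying that $\mu_{H^{\circ}}$ is translation-invariant directly from the Chatzidakis--Ramsey definition. The counting of permissible intermediate fields between $k(a)$ and the splitting field of the polynomials appearing in a test formula has to be shown to be stable under replacing $a$ with $h\cdot a$ for $h\in H^{\circ}(k)$; this amounts to checking that the splitting field over $k(a)$ and the Galois-theoretic data (regularity of the extension and $e$-generation of the relevant Galois group) are preserved by the $k$-automorphism of $k(a)$ induced by translation, which is reasonable because translation by a $k$-point is a $k$-biregular iso\-mor\-phism of $H^{\circ}$ sending generics to generics. Once this invariance is in hand, the rest of the argument is bookkeeping for passing through the virtual isogeny and the coset decompositions.
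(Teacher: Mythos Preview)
Your proposal is correct and follows exactly the approach the paper describes. The paper does not give a detailed proof of this theorem; it merely attributes the result to Chatzidakis and Ramsey and summarizes their method in one sentence: ``Chatzidakis and Ramsey apply \cref{efreefini} and the isogeny to move the measure $\mu_{H}$ they defined on $H$ into $G$.'' Your write-up is a faithful expansion of precisely this strategy --- obtain the virtual isogeny with an algebraic group $H$, check that the Chatzidakis--Ramsey measure on $H^{\circ}$ is translation-invariant, and then perform the bookkeeping through finite quotients and coset averaging to produce an invariant measure on $G$ --- and the point you flag as the main obstacle (translation invariance of $\mu_{H^{\circ}}$) is indeed the substantive step, handled in the Chatzidakis--Ramsey paper rather than here.
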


The remainder of this subsection is dedicated to showing that the measure $\mu_{V}$ is definable over models. For this we begin by a quick proof to get a good understanding of this measure.

\begin{prop} Let $k$ be an $e$-free PAC field and $V$ be an absolutely irreducible variety defined in $k$. The measure $\mu_{V}$ is invariant under elementary extension: Let $k\prec k'$ be an elementary extension, $b\in k$ and $\varphi(x,y)$ be a test formula. Then if we write $\mu_{V}'$ for the measure on the definable subsets of $V$ computed in $k'$ we have $\mu_{V}(\varphi(x,b))=\mu_{V}'(\varphi(x,b))$.
\end{prop}

\begin{proof} Let $k\prec k'$ be an elementary extension, $b\in k$ and $\varphi(x,y)$ be a test formula. We write $\mu_{V}'$ for the measure on the definable subsets of $V$ in the sense of $k'$. Let $a$ be a generic of $V$ over $k'$. Since $V$ is defined over $k$ we have that $a\forkindep^{ACF}_{k}k'$. Let $L$ be the splitting field of the polynomials appearing in $\varphi(a,b)$ over $k(a)$.

\vspace{10pt}
Define $L':=Lk'$. $L'/k'(a)$ is a finite Galois extension that contains all of the roots of the polynomials appearing in $\varphi(a,b)$. $L\forkindep^{ACF}_{k}k'$ and $k'/k$ is a regular extension, so $L$ and $k'$ are linearly disjoint over $k$. This implies that the Galois groups $\Gal(L/k(a))$ and $\Gal(L'/k'(a))$ are isomorphic via the restriction map, and that any intermediate extension $k'(a)\subseteq K' \subseteq L'$ can be written $K'=k'(K'\cap L)$. We show that an intermediate extension $k(a)\leq K\leq L$ is permissible if and only if $k'(a)\leq k'K\leq L'$ is permissible.

\vspace{10pt}
Let $k(a)\leq K \leq L$ be a permissible field. Define $K':=k'K$. $K/k$ is a regular extension, so $K'/k'$ and $K'/K$ both are regular extensions. Since $\Gal(L/K)$ is $e$-generated and since there is a surjective map from this group to $\Gal(L'/K')$ the group $\Gal(L'/K')$ is $e$-generated, and $k'(a)\leq K'\leq L'$ is permissible.

\begin{figure}[hbtp]
    \centering
\begin{tikzcd}
                                & k'K                                                           &                                  \\
K \arrow[ru, "reg" description] &                                                               & k' \arrow[lu, "reg" description] \\
                                & k \arrow[ru, "reg" description] \arrow[lu, "reg" description] &                                 
\end{tikzcd}
\caption{$K\forkindep^{ACF}_{k}k'$, $K/k$ and $k'/k$ are regular}

\end{figure}

\vspace{10pt}
Conversely assume that $K'$ is a permissible intermediate extension $k'(a) \subseteq K' \subseteq L'$. Let $K:=K'\cap L$, then $K'=k'K$. $K'/k$ is regular, so $K/k$ also is. 
Since $L\forkindep^{ACF}_{K}K'$ and $K'/K$ is regular $L$ and $K'$ are linearly disjoint over $K$. This implies that the restriction map $\Gal(L'/K')\rightarrow \Gal(L/K)$ is surjective, so $k(a)\leq K \leq L$ is permissible. 

\vspace{10pt}
The extension $K'/K$ is regular, and $K$ contains the parameters of the formula $\varphi(a,b)$. This formula is satisfied in $K'$ if and only if it is satisfied in $K$ (since $\varphi$ is a test formula). From this we see that there is the same number of admissible fields in $L$ and $L'$, and that an admissible field $k(a)\leq K\leq L$ satisfies the formula $\varphi$ if and only if $K':=k'K$ satisfies it, so $\mu_{V}(\varphi(x,b))=\mu_{V}'(\varphi(x,b))$.
\end{proof}

We now recall and prove a few facts about field extensions which we will need for the proof of \cref{defsurk}.

\begin{remark}\label{equalextens} Let $k$ be an $e$-free PAC field of characteristic $0$, $V$ an absolutely irreducible variety defined over some subfield $v \subseteq k$ and $a=a_{1}..a_{k}$ be a generic of $V$ over $k$. 
Consider $P(X) \in k[X]$ a polynomial in a tuple over $k$ in a tuple of variable $X$, with $a$ an $X$-tuple. Then $P(a)=0$ if and only if $P(x)=0$ for all $x\in V(k)$. This means that the equality inside of $k(a)$ between two polynomials $P(a),Q(a)\in k(a)$ is definable by a formula over $v$ in their coefficients.\end{remark}

\begin{cor}\label{polirrka} Let $k$ be an $e$-free PAC field of characteristic $0$, $V$ an absolutely irreducible variety defined over some subfield $v \subseteq k$ and $a$ be a generic of $V$ over $k$. Let $x,y$ be two tuples of variables such that $a$ is an $x$-tuple. Given a sequence of rational fractions $(f_{i}(x,y))_{i\leq n}\in \mathbb{Q}(x,y)$ there is a formula $\theta(y)$ over $v$ such that $k\models \theta(b)$ if and only if $\sum_{i\leq n} f_{i}(a,b)\cdot X^{i} \in k(a)[X]$ is irreducible in $k(a)[X]$ for all $y$-tuples $b\in k$.
\end{cor}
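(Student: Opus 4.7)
The plan is to express reducibility of $P_b(X)$ in $k(a)[X]$ as a first-order existential statement in $b$ over $v$, and then take its negation, together with the (definable) condition that $\deg_X P_b\geq 1$. Throughout, I work on the locus where $P_b(X)$ is well-defined: writing $D(x,y)\in k[x,y]$ for a common denominator of the $f_i$, this corresponds to $D(a,b)\neq 0$ in $k(a)$, which by the negation of \cref{equalextens} is definable in $b$ over $v$. On this locus, multiplication by the unit $D(a,b)\in k(a)^{\times}$ preserves irreducibility, so I may replace $P_b$ by $\sum F_i(a,b)X^i\in k[a][X]$ with $F_i:=Df_i\in k[x,y]$ and assume from the outset that $f_i\in k[x,y]$.

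Next, $P_b$ is reducible in $k(a)[X]$ if and only if there exist polynomials $A(x,X),\,B(x,X)\in k[x,X]$ and $c(x)\in k[x]$ with $c(a)\neq 0$ in $k(a)$, $\deg_X A(a,X)\geq 1$, $\deg_X B(a,X)\geq 1$, and $A(a,X)\cdot B(a,X)=c(a)\cdot P_b(X)$ in $k(a)[X]$; the polynomial $c$ is there to clear denominators from any genuine factorization over $k(a)$. To render this existential quantifier first-order, I need a uniform bound $D$ on the $x$-degrees of $A,B,c$ depending only on $V$ and the $F_i$ (the $X$-degrees of $A,B$ are automatically bounded by $n$). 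Establishing this bound is the main obstacle. I would handle it via Noether normalization: choose $k[t_1,\ldots,t_d]\hookrightarrow k[V]$ so that $k(a)$ becomes a finite algebraic extension of the purely transcendental field $k(t_1,\ldots,t_d)$. Factorizations over $k(a)$ descend through the finite extension to bounded-complexity factorizations over $k(t_1,\ldots,t_d)$; Gauss's lemma in the polynomial ring $k[t_1,\ldots,t_d,X]$ then furnishes bounded-degree factorizations there, and pulling back through the finite extension produces bounded-$x$-degree representatives $A,B,c$ in the original coordinates.

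Given such a bound $D$, the existential $\exists A,B,c$ ranges over a fixed finite-dimensional $k$-vector space parametrized by a tuple $\bar\alpha\in k^N$, so it is a first-order existential quantifier. The three conditions above unfold into finitely many statements of the form ``$P(a)=0$'' or ``$P(a)\neq 0$'' in $k(a)$, where $P\in k[x]$ has coefficients that are $k$-polynomial expressions in $\bar\alpha$ and $b$ (the polynomial identity in $k(a)[X]$ decomposes into $n+1$ coefficient equations in $k(a)$ upon equating powers of $X$). By \cref{equalextens}, each such statement is definable in $\bar\alpha$ and $b$ over $v$. Taking the disjunction over the finitely many admissible pairs $(\deg_X A,\deg_X B)$ with both components $\geq 1$ gives a formula $\chi(y)$ over $v$ equivalent to reducibility of $P_b$ in $k(a)[X]$. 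The desired formula is then $\theta(y):=\phi_{\geq 1}(y)\wedge\lnot\chi(y)$, where $\phi_{\geq 1}(y)$ asserts $\deg_X P_b\geq 1$ and is itself definable via \cref{equalextens} applied to the leading coefficients.
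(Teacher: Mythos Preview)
Your proposal is correct and follows the same overall strategy as the paper: express reducibility of $P_b(X)$ in $k(a)[X]$ as an existential statement over a bounded family of candidate factors, and invoke \cref{equalextens} to turn each resulting equality or inequality in $k(a)$ into a formula in $b$ over $v$.

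The only substantive difference is how the degree bound on the candidate factors is obtained. The paper normalizes to the monic case and asserts, by ``a straightforward induction on the degree of $P$,'' that any monic factor $Q$ of a monic $P\in k(a)[X]$ has $\deg_a Q$ bounded in terms of $\deg_a P$ and $\deg P$; it then quantifies directly over coefficients in $k(a)$ of bounded $\deg_a$. You instead keep the non-monic formulation $A(a,X)\cdot B(a,X)=c(a)\cdot P_b(X)$ and argue the bound via Noether normalization plus Gauss's lemma over the polynomial ring $k[t_1,\dots,t_d]$. Both routes land at the same place, and both leave the degree-bound step somewhat sketched; your ``descend through the finite extension and pull back'' deserves a sentence more (the cleanest way is to note that coefficients of a monic factor are integral over the coefficient ring of $P$, hence lie in the integral closure of $k[t]$ in $k(a)$, which is a finite $k[t]$-module with an explicit basis, and then bound $t$-degrees there via the argument you indicate). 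The paper's route is marginally more direct; yours has the minor advantage of not dividing by the leading coefficient, so the well-definedness locus is handled uniformly.
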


\begin{proof} To state that a polynomial is irreducible we need to quantify over the polynomials of smaller degree. The problem here is that this case the coefficients of such polynomials are in $k(a)$. However we can find a bound on the degree of the coefficients in $a$:

\vspace{10pt}
Let $P = \sum_{i\leq m} g_{i}(a)\cdot X^{i} \in k(a)[X]$ where $g_{i}=\frac{\sum_{j\leq n_{i}}\lambda_{i,j}\cdot \prod_{l}a_{l}^{n_{l,i,j}}}{\sum_{j\leq m_{i}}\mu_{i,j} \cdot \prod_{l}a_{l}^{m_{i,j}}}$ with $\lambda_{i,j},\mu_{i,j}\in k$ and the polynomials $\sum_{j\leq n_{i}}\lambda_{i,j}\cdot \prod_{l}X_{l}^{n_{l,i,j}}$ and $\sum_{j\leq m_{i}}\mu_{i,j} \cdot \prod_{l}X_{l}^{m_{i,j}}$ are coprime in $k[\overline{Y}]$ where $\overline{Y}=Y_{1}..Y_{k}$.

\vspace{10pt}
We define the $a$-degree of $P$ as:

\begin{center}$\deg_{a}(P)=max\lbrace \sum_{l}( n_{l,i,j}) + \sum_{l} (m_{l,i,j})$ : $ i\leq m, j\leq m_{i}\rbrace$.
\end{center}

By a straightforward induction of the degree of $P$ we can show that if $P$ is a unitary polynomial in $k(a)[X]$ and if $Q$ is also unitary and divides $P$ then $\deg_{a}(Q) < l$ for some integer $l$ depending only on $\deg_{a}(P)$ and $\deg(P)$. With this in mind we can state that $\sum_{i\leq n} f_{i}(a,b)\cdot X^{i} \in k(a)[X]$ is irreducible in $k(a)[X]$ by stating:

\begin{center}
    $Q \times S \centernot = \sum_{i\leq n} f_{i}(a,b)\cdot X^{i}$ for all unitary $Q,S\in k(a)[X]$ such that $\deg(Q),\deg(S)< \deg(P)$ and $\deg_{a}(Q),\deg_{a}(S)<l$.
\end{center}

This is indeed a formula over $v$ since $Q\times S \centernot = \sum_{i\leq n} f_{i}(a,b)\cdot X^{i}$ is a disjunction of inequalities in $k(a)$ which are expressible by formulas over $v$ by \cref{equalextens}.
\end{proof}

\begin{remark}\label{extalg} Let $F$ be a field, $F(\overline{t})$ be a purely transcendental extension of $F$ with $\overline{t}=t_{1}..t_{m}$ a transcendence basis and let $L=F(\overline{t})(\alpha)$ be an algebraic extension of degree $n$ of $F(\overline{t})$. Suppose that $P(X)=  X^{n} + \sum_{i<n}\lambda_{i}(\overline{t}) \cdot X_{i}$ is the minimal polynomial of $\alpha$ over $F(\overline{t})$. Then there exists some integer $M$ depending only on $n=\deg(P)$ and on $\deg_{\overline{t}}(P)$ such that:

\begin{center}
$F^{alg}\cap L \subseteq \Lin_{F} \big\lbrace ( \prod_{i} t_{i}^{k_{i}}) \cdot \alpha^{j}$ : $k_{i} < M$ for all $i\leq m$ and $j<n \big\rbrace$.\end{center}
    
\end{remark}

\begin{proof}
The extension $F(\overline{t})/F$ is purely transcendental, so it is regular. The extension $L\cap F^{alg}/F$ is algebraic, so $F(\overline{t})$ and $L\cap F^{alg}$ are linearly disjoint over $F$. From this we deduce that the extension $L\cap F^{alg}/F$ has degree $\leq n$: Indeed any family of elements of $L\cap F^{alg}$ linearly independent over $F$ is linearly independent over $F(\overline{t})$, so it should have size $\leq n = [F(\overline{t})(\alpha):F(\overline{t})]$.

\vspace{10pt}
So if an element of $L$ is algebraic over $F$ it is witnessed by a polynomial of degree $\leq n$ in $F[X]$. Also: \begin{center} $\mathcal{B}:=\big\lbrace ( \prod_{i} t_{i}^{k_{i}}) \cdot \alpha^{j}$ : $\overline{k}\in \omega^{m}$ and $j<n \big\rbrace$ \end{center}

is a basis of $L$ as a $F$-vector space. From these two facts it is easy to see that if a $t_{i}$ with too high exponent appears inside of some $x \in L$ when writing $x$ as a linear combination of the elements of $\mathcal{B}$ with coefficients in $F$, then $P(x)\centernot= 0$ for every polynomial $Q\in F[X]$ of degree $\leq n$.
\end{proof}


The proof of the following proposition was hinted to me by Ramsey:

\begin{prop}\label{defsurk} Let $F$ be a monster $e$-free PAC field and $V$ be an absolutely irreducible variety defined in $F$. Let $k\prec F$ be an elementary substructure over which $V$ is defined. Then the measure $\mu_{V}$ is definable over $k$.\end{prop}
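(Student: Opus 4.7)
The plan is to show that for each formula $\varphi(x,y)$ the value $\mu_V(\varphi(x,b))$ depends only on a finite amount of Galois-theoretic data attached to $b$, and then to translate that data into formulas over $k$ using \cref{polirrka}. We may assume $\varphi(x,b)$ is a test formula, so a Boolean combination of atoms $\exists z\,P_j(x,b,z)=0$ for finitely many $P_j\in k[x,b,z]$. Let $a$ be a generic of $V$ over $k$ and let $L_b$ be the splitting field over $k(a)$ of $\prod_j P_j(a,b,Z)$. The Galois group $G_b:=\Gal(L_b/k(a))$ has order bounded by some integer $N$ depending only on the degrees of the $P_j$, not on $b$.

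The key object is the finite configuration associated with $b$, consisting of: (i) the abstract group $G_b$ together with its action on the labeled set of roots of each $P_j(a,b,Z)$, and (ii) the normal subgroup $N_b\leq G_b$ corresponding under the Galois correspondence to $C_b\cdot k(a)$, where $C_b:=L_b\cap k^{alg}$ is a finite Galois extension of $k$ (using that $k(a)/k$ is regular, since $V$ is absolutely irreducible). Since $|G_b|\leq N$, there are only finitely many possibilities for this configuration as $b$ ranges over $k$.

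I would then establish two points. First, for each configuration type $\tau$, the set $\{b\in k: b\text{ has configuration }\tau\}$ is $k$-definable: the $G_b$-action on the roots of the $P_j(a,b,Z)$ is determined by the degrees of the irreducible factors of the $P_j(a,b,Z)$ and of appropriate Galois resolvents over $k(a)[Z]$, and the subgroup $N_b$ is detected by asking which intermediate subfields of $L_b$ (cut out by factorization conditions on the $P_j$) satisfy additional polynomial identities over $k$. Each such factorization statement in $k(a)[Z]$ translates, via \cref{polirrka} together with the degree bound from \cref{extalg}, into a formula over $k$ in $b$. Second, $\mu_V(\varphi(x,b))$ depends only on $\tau$: for each subgroup $H\leq G_b$, the intermediate field $K_H=L_b^H$ satisfies $\exists z\,P_j(a,b,z)=0$ iff some root of $P_j(a,b,Z)$ is $H$-fixed; the extension $K_H/k$ is regular iff the image of $H$ in $G_b/N_b\cong\Gal(C_b/k)$ is all of $G_b/N_b$; and $\Gal(L_b/K_H)=H$ is $e$-generated iff $H$ is generated by $e$ elements. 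All of these conditions depend only on the configuration $\tau$ together with $H$. Counting permissible $H$ up to $G_b$-conjugacy (to recover $k(a)$-isomorphism classes of intermediate fields) and the subset among them satisfying $\varphi(a,b)$ is thus a function of $\tau$ alone, so $\mu_V(\varphi(x,b))$ is too.

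Combining these two points, for each rational value $\alpha$ in the finite image of $\mu_V$, the set $\{b\in k:\mu_V(\varphi(x,b))=\alpha\}$ is a finite union of $k$-definable pieces, hence $k$-definable; this yields definability of $\mu_V$ over $k$. The main obstacle I anticipate is the clean identification of $N_b$ (equivalently of $C_b$) from factorization data: one must show that the finitely many possibilities for the isomorphism class of $C_b/k$ and its embedding into $L_b$ are separated by polynomial factorization conditions in $k(a)[Z]$ that translate, via \cref{polirrka} and \cref{extalg}, into formulas over $k$ in $b$. Once this bookkeeping is carried out, the remainder of the argument is purely group-theoretic.
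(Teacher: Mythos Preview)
Your proposal is correct and follows essentially the same route as the paper: both reduce $\mu_V(\varphi(x,b))$ to a finite package of Galois data --- the permutation action of the Galois group on the labeled roots of the $P_j$, together with the ``algebraic part'' $L_b\cap k^{alg}$ --- and both invoke \cref{polirrka} and \cref{extalg} to show that this data is cut out by formulas in $b$ over $k$. Your framing via finitely many configuration types $\tau$ is a tidy repackaging of the paper's Claim~1 and Claim~2 (which instead pin down the two Galois groups for each fixed $b$ via explicit primitive elements $\alpha_b,\beta_b$ and then read off the permissibility and truth conditions); one small slip to correct is that since $b$ ranges over $F$, you should take $a$ generic over $F$ and form the splitting field over $F(a)$ (equivalently over $acl(kb)(a)$, as the paper does), not over $k(a)$.
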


\begin{proof}
Let $(P_{i}(X,x,y))_{1 \leq i\leq m}$ be the polynomials appearing in the test formula $\varphi(x,y)$. Let $a$ be a generic of $V$ over $F$. Let $b$ be a $y$-tuple. We write $k_{b}:=acl(k(b))=F\cap (k(b))^{alg}$. Let $L_{b}$ be the splitting field of the polynomials $P_{i}(X,a,b)\in k(a,b)[X]$ over $k_{b}(a)$. Since $k_{b}(a)\forkindep^{ACF}_{k_{b}}F$ and $F/k_{b}$ is regular we have $G:= \Gal(FL_{b}/F(a)) \cong \Gal(L/k_{b}(a))$.

\vspace{10pt}
\textbf{Claim 1:} There is a formula $\theta_{1}(y)\in \tp(b/k)$ such that $\Gal(FL_{b'}/F(a)) \cong G$ whenever $b'\models \theta_{1}(y)$.

\vspace{10pt}
\textit{Proof of Claim 1:} We can write $L_{b} = k_{b}(a)(\alpha_{b})$. Let $P_{0}(X)\in k_{b}(a)[X]$ be the minimal polynomial of $\alpha_{b}$ over $F(a)$, let $n_{0}$ be the degree of $P_{0}$. There exist some polynomials $p_{i,j}(Z)\in k_{b}(a)[Z]$ of degree $\leq n_{0}$ with $p_{0,0}(Z)=Z$ and some polynomial $q((Z_{i,j})_{i,j})\in k_{b}(a)[(Z_{i,j})_{i,j}]$ such that:
\begin{enumerate}
    \item $P_{0}(X)$ is irreducible in $F(a)[X]$
    \item $P_{i}(X,a,b)\equiv \prod_{j<n_{i}}(X-p_{i,j}(Z))$ mod $P_{0}(Z)$ for every $i\leq m$,
    \item $q((p_{i,j}(Z))_{i,j}) \equiv Z$ mod $P_{0}(Z)$
\end{enumerate}

This can be expressed by a formula $\theta(y)$ with parameters in $k$: The coefficients are in $k_{b}(a)$ so we can take the algebraic formulas that they satisfy over $k,b$, by \cref{polirrka} we can state that the polynomial we get is irreducible since it has the same $\deg$ and $\deg_{a}$ as the $P_{0}$ of $b$. The congruence relations are witnessed by some equalities inside of $k(a)$, and so inside of $k$ by \cref{equalextens}. Then the condition $\Gal(P_{0}(X),F(a)) \cong G$ as permutation groups can be expressed using these polynomials.

\vspace{10pt}
Let $z_{j}:= p_{0,j}(\alpha_{b})$ for $1 \leq j \leq l$. If $\sigma\in \Gal(P_{0}(X)/F(a))$ then there is a unique $\sigma' \in S_{n_{0}}$ such that $\sigma (z_{i})=z_{\sigma' (i)}$ so we can see $\Gal(P_{0}(X)/F(a))$ as a subgroup of $S_{n_{0}}$. Then $\Gal(P_{0}(X),F(a)) \cong G$ is equivalent to:

\begin{center}
$\bigwedge\limits_{\sigma \in G} \left(  \bigwedge\limits_{j=1}^{l} (p_{0,\sigma (j)}(z_{1}) = p_{0,j}(z_{\sigma (j)})) \right) \wedge \bigwedge\limits_{\sigma \in S_{n}\setminus G} \left( \bigvee\limits_{j=1}^{l} (p_{0,\sigma (j)}(z_{1}) \centernot= p_{0,j}(z_{\sigma (j)})) \right)$
\end{center}

These equalities and inequalities in $F(a)[X]/ (P_{0}(X))$ correspond to systems of equalities and inequalities in $F(a)$, so by \cref{equalextens} they correspond to formulas over $k$.\hspace*{0pt}\hfill\qedsymbol{}

\vspace{10pt}
Let $F_{L_{b}}:= FL_{b}\cap F^{alg}$ and $H:= \Gal(F_{L_{b}}/F)$.

\vspace{10pt}
\textbf{Claim 2:} There is a formula $\theta_{2}(y)\in \tp(b/k)$ strengthening $\theta_{1}$ such that $\Gal(F_{L_{b'}}/F) \cong H$ whenever $b'\models \theta_{2}(y)$.
\vspace{10pt}

\textit{Proof of Claim 2:} Let $\beta_{b} = q(\alpha_{b})$ be a generator of $F_{L_{b}}$ over $F$, with $q\in F(a)[X]$. Let $Q(X)\in F[X]$ be the minimal polynomial of $q(\alpha_{b})$ over $F$ and let $l:=\deg(Q)$. Let $P_{0}$ be the polynomial from Claim 1. There exist some polynomials $q_{i}(Z)\in F[Z]$ for $i<l$ of degree $\leq l$ with $q_{0}(Z)=Z$ such that:
\begin{enumerate}
    \item $Q(X)\in F[X]$ is irreducible in $F[X]$,
    \item $Q(X) \equiv \prod_{j<n_{i}}(X-q_{i}(q(Z)))$ mod $P_{0}(Z)$,
    \item For every $x = g(\alpha_{b}) \in FL_{b}\cap F^{alg}$ with $g\in F(a)[X]$ there is $f\in F[X]$ of degree $\leq l$ such that $g(Z) \equiv f(q(Z))$ mod $P_{0}(Z)$.
\end{enumerate}

These conditions are expressible by some formulas, for the first two ones it is clear. For the third one, in order to express that $ FL_{b}\cap F^{alg}\subseteq F(\beta_{b})$ it is enough to let $g$ range over the polynomials of degree $\leq n$ that also have $\deg_{a}$ smaller than $M$, where $M$ is the bound from \cref{extalg} - so there is only a finite number of coefficients in $F$ over which we have to quantify. As previously we can express that $\Gal(Q,F) \cong H$ as a permutation group on the roots $q_{i}(Z)\in F[Z]$.\hspace*{0pt}\hfill\qedsymbol{}

\vspace{10pt}
We have a formula expressing both $\Gal(FL_{b'}/F(a))$ and $\Gal(F_{L_{b'}}/F)$ as permutation groups. An intermediate field $F(a) \subseteq F'\subseteq FL_{b'}$ is regular over $F$ if and only if the restriction $\Gal(FL_{b'}/F') \rightarrow \Gal(F_{L_{b'}}/F)$ is surjective. The conditions defining the measure $\mu_{V}(\varphi(x,b))$ are expressible by formulas over $k$:

\vspace{10pt}
- A subgroup $G'\subseteq G$ corresponds to a permissible field $F'= Fix(G')$ if and only if the restriction $G' \rightarrow H$ is surjective and if $G'$ is $e$-generated.

\vspace{10pt}
- Given a subgroup $G'\subseteq G$ such that $\mathrm{Fix}(G')$ is permissible we can express that $\mathrm{Fix}(G')\models \varphi (a,b')$ by saying which roots of the $P_{i}$ are fixed by the elements of $G'$ and which are not.\end{proof}

\subsection{$\omega$-free PAC fields}

Let $F$ be an $\omega$-free PAC field. By \cite[Lemma 4.2]{chatzidakis2023measures} any $\omega$-free PAC is elementary equivalent to an ultraproduct of $e$-free PAC fields. Replacing $F$ by an elementarily equivalent field, we may assume that $F=\mathbb{F}:=\prod_{e<\omega}\mathbb{F}_{e}/\mathcal{U}$, where the $\mathbb{F}_{e}$ are $\kappa$-saturated $e$-free PAC field of characteristic $0$ for $e<\omega$, and $\mathcal{U}$ is a non-principal ultrafilter.

\vspace{10pt}
Let $G$ be a group definable in $\mathbb{F}$ over a finite set $A$. The group $G$ can be written as $G=\prod_{e<\omega}G_{e}/\mathcal{U}$ where $G_{e}$ is a group definable in $\mathbb{F}_{e}$ over some finite set of parameters $A_{e}$ for $\mathcal{U}$-almost all $e<\omega$.

\vspace{10pt}
Using the measures from the previous subsection Chatzidakis and Ramsey prove the following:

\begin{prop}\cite[Proposition 4.13]{chatzidakis2023measures} Let $G$ be a group definable in an $\omega$-free PAC field of characteristic $0$. Then $G$ is definably amenable.\end{prop}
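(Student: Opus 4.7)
The plan is to invoke the Chatzidakis–Ramsey theorem for $e$-free PAC fields on each coordinate of the ultraproduct, and transfer the resulting measures to $G$ via \cref{mesureprod}. Decompose $G = \prod_{e<\omega} G_e/\mathcal{U}$, where each $G_e$ is a group definable over a finite set $A_e$ in the $e$-free perfect PAC field $\mathbb{F}_e$. The previous Chatzidakis–Ramsey theorem supplies on each $G_e$ a translation-invariant finitely additive probability measure $\mu_e$, obtained by pulling back the Galois-theoretic measure $\mu_{H_e}$ on an algebraic group $H_e(\mathbb{F}_e)$ along the virtual isogeny from \cref{efreefini} (with a finite fiber correction).

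Next, I would choose an elementary substructure $M_e \prec \mathbb{F}_e$ containing $A_e$ together with the parameters defining $H_e$ and the isogeny. Then \cref{defsurk} guarantees that $\mu_{H_e}$ is definable over $M_e$, and since the isogeny and the fiber counting are $L$-definable over $M_e$, the pulled-back measure $\mu_e$ is itself definable over $M_e$. Moreover, for each fixed $L$-formula $\varphi(x,y)$ the values $\mu_e(\varphi(x,b))$ lie in a finite set of rationals as $b$ varies in $\mathbb{F}_e$: inspecting the construction of $\mu_{H_e}$, both numerator and denominator are controlled by the number of permissible intermediate fields of the splitting field of the polynomials appearing in $\varphi$, and this combinatorial bound depends only on the degrees of these polynomials, not on $b$.

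With these properties in hand, I would apply \cref{mesureprod} to the family $(\mu_e)_{e<\omega}$, taking $M_e$ as the set of parameters and $G_e$ as the ambient definable set. This produces a countable expansion $L'$ of the language on $\mathbb{F}$ and a measure $\mu$ on $G$ extending the ultraproduct of the $\mu_e$'s and definable over $M' := \prod_e M_e/\mathcal{U}$ in $L'$. By \cref{mesureprodremarque}, since each $\mu_e$ is translation-invariant in $G_e$, the measure $\mu$ is translation-invariant in $G$; restricting $\mu$ to $L$-definable subsets of $G$ then yields the finitely additive translation-invariant probability measure witnessing that $G$ is definably amenable.

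The main obstacle I anticipate lies in the verification step: one must check carefully that both the definability and the finite-range property survive the pull-back through the virtual isogeny, handling the finite fibers so that $\mu_e$ inherits from $\mu_{H_e}$ the uniform dependence on parameters needed to feed into \cref{mesureprod}; everything else is then a direct transfer.
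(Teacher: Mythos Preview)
The paper does not prove this proposition itself; it is cited from Chatzidakis--Ramsey, and the paragraph immediately following the statement merely records the mechanism: each coordinate group $G_e$ carries a translation-invariant probability measure $\mu_e$, and the ultraproduct measure $\mu=\prod_e\mu_e/\mathcal{U}$ is translation-invariant on $G$. That one sentence is the entire argument the paper offers.

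Your plan reaches the same conclusion but by a detour that is unnecessary for the statement at hand. Definable amenability only asks for a finitely additive translation-invariant probability measure on the $L$-definable subsets of $G$; the raw ultraproduct measure (standard part of $\prod_e\mu_e(\varphi_e(x,b_e))/\mathcal{U}$) already provides this, with translation-invariance inherited coordinatewise. You instead route everything through \cref{mesureprod}, which requires you to verify that each $\mu_e$ is definable over $M_e$ and finite-valued for each formula, then build the $L'$-expansion, and finally restrict back to $L$. The obstacle you flag---checking that definability and the finite-range property survive the pullback along the virtual isogeny of \cref{efreefini}---is a genuine verification that you have not carried out, but it is also entirely avoidable for the present proposition: none of those hypotheses are needed if you simply take the ultraproduct measure directly.

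What your approach actually accomplishes is a proof sketch of (H3) for $G$, which is precisely what the paper does later inside the proof of \cref{omegafreegroup}. So your argument is not wrong, but it conflates two distinct steps: the cited amenability result (which needs only the ultraproduct measure) and the subsequent upgrade to an $L'$-definable measure (which does need \cref{mesureprod} and the definability of each $\mu_e$). Separating these would make the logic cleaner and would let you state the amenability without the unresolved obstacle.
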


In this context every group $G_{e}$ in $M_{e}$ is definably amenable with a measure $\mu_{e}$ which is invariant under translations, is such that $\mu(G_{e})=1$ and is definable over an $e$-free PAC field $A_{e} \subseteq M_{e}\prec \mathbb{F}_{e}$ by \cref{defsurk}. $G$ is also definably amenable with the ultraproduct measure $\mu = \prod_{e<\omega} \mu_{e}/\mathcal{U}$, which is also invariant under translation. Let $M:=\prod_{e<\omega}M_{e}/\mathcal{U} \prec \mathbb{F}$.

\vspace{10pt}
In the previous subsection we saw that the definable amenability of definable groups in $e$-free PAC fields with $e$ finite is proven using the fact that such a group is definably isogenous to the rational points of an algebraic group. This raises an insightful question as to whether the same applies to $\omega$-free PAC field. With the previous results from this chapter we get a partial answer:

\begin{theorem}\label{omegafreegroup} There exist an algebraic group definable over $M$, a type-definable subgroup $G_{0}$ of bounded index in $G$ and a definable finite to one group homomorphism from $G_{0}$ to $H(\mathbb{F})$.
\end{theorem}

\begin{proof} We apply \cref{resultatgroup}. By \cref{stabomegafree} the assumption (H1) is satisfied. (H2) follows from \cref{pacchar0}. We want to show that (H3) holds, for this we construct an expansion of the language.

\vspace{10pt}
By \cref{mesureprod} and \cref{mesureprod} there is a countable extension $L'$ of the language $L_{ring}$ and a canonical $L'$-structure on $\mathbb{F}$ such that the group $G$ is definably amenable in $L'$ by a measure $\mu'$ extending $\mu$ which is definable over $M$ and invariant under translations. We will write $\mathbb{F}'$ for $\mathbb{F}$ seen as an $L'$-structure.

\vspace{10pt}
We have a definably amenable group in $L'$, so the ideal of formulas of measure $0$ is $S1$ on $G$, $M$-invariant and invariant under translation, so (H3) holds. By \cref{resultatgroup} there is a type-definable subgroup $G_{0}$ of $G$ of bounded index, an algebraic group $H$ and a definable finite to one group homomorphism $\varphi$ from $G_{0}$ to $H(\mathbb{F})$.\end{proof}

\begin{remark}
We can ask ourselves if we can get a stronger conclusion: In fact the subgroup $G_{0}$ is wide and the group homomorphism $\varphi$ from $G_{0}$ to $H(\mathbb{F})$ is defined on a domain $X_{\varphi}$, which is a definable set included in $G$ containing $G_{0}$. So $\mu(X_{\varphi}) > \frac{1}{n}$ for some $n<\omega$. This implies that any group containing $X_{\varphi}$ has finite index, and so finding an extension of $\varphi$ to such a group would strengthen the conclusion to $G_{0}$ being of finite index.\end{remark}

\begin{remark} \cref{omegafreegroup} does not hold for interpretable groups: In \cite{hrushovski1994groups} (p. 207) Hrushovski and Pillay point out that if $F$ is a PAC field of characteristic $0$ such that the subgroup of $n$-th powers have infinite index in $(F^{*},\cdot)$, then the group $G=F^{*}/(F^{*})^{n}$ can not be abstractly isomorphic to $H(F)$ for $H$ any algebraic group. More generally no type-definable subgroup of $G$ of bounded index can be sent by a morphism of finite kernel into $H(F)$.
\end{remark}

\section{Some questions}

The main difficulty we face when trying to prove some result on the Kim-stabilizer is that without additional assumptions it is difficult to show that the set $S^{K}(p)$ is not trivial, or that it is large in some sense for a certain type. In the proof of \cref{resultatgroup} we use the existence of the ideal $\mu$ to know that the type defining $S^{K}(p)$ is $\mu$-wide.

\vspace{10pt}
We conclude this chapter with some questions that we collected from the different parts of this work:

\begin{enumerate}
\item Is there an interesting notion of genericity in groups definable in NSOP$_1$ theories?
    \item Is $Stab^{K}(p)$ type-definable for every type $p$? Is it always equal to $S^{K}(p)\cdot S^{K}(p)$?
    \item More accurately, if $X$ is a definable or type-definable subset of a group $G$ definable in an NSOP$_1$ theory that is stable by Kim-independent product (over the parameters defining the group), is the subgroup generated by $X$ definable or type-definable?
    \item When does $S^{K}(p)$ contain a translate of $p$? 
    \item Consider a measure $\mu$ on definable subsets of a definable group $G$ which is invariant by translation in a $L$-theory $T$. Are there some conditions under which there exist an extension $T'$ of $T$ in some language $L'$ and an extension of $\mu$ into a measure $\mu'$ on $L'$-definable subsets of $G$ that is definable and also invariant by translation?
\end{enumerate}

\bibliographystyle{alpha}
\bibliography{Ref}

\end{document}